\documentclass[12pt,leqno,a4paper]{article}
\textwidth=360pt
\textheight=615pt
\usepackage{lineno,hyperref}

\usepackage{amsmath}
\numberwithin{equation}{section}
\usepackage{bigints}
\usepackage{amsthm}
\usepackage{amssymb}
\usepackage{amsfonts}
\usepackage{mathtools}
\usepackage{thmtools}
\usepackage{enumerate}
\usepackage{enumitem}
\usepackage{scalerel}
\usepackage{graphicx}
\usepackage{float}
\usepackage{setspace}
\newtheorem{theorem}{Theorem}[section]
\newtheorem{corollary}[theorem]{Corollary}
\newtheorem{lemma}[theorem]{Lemma}
\newtheorem{proposition}[theorem]{Proposition}
\declaretheoremstyle[bodyfont=\normalfont]{normalbody}
\declaretheorem[style=normalbody,name=Example,numberwithin=section]{example}

%

\def\mTa{\mathcal{T}}
\def\bmTa{\overline{\mathcal{T}}}
\def\mU{\mathcal{U}}
\def\del0{\delta_{0}}
\def\tzau{\tau_{0}}
\def\toau{\tau_{1}}
\def\ttwo{\tau_{2}}

\def\taujk{\tau_{j_{k}}}
\def\alphai{\alpha_{i}}
\def\epszerosquare{\varepsilon_{0}^{2}}
\def\epszero{\varepsilon_{0}}
\def\Czero{C_{0}}
\def\Czerosquare{C_{0}^{2}}
\def\Gammaj{\Gamma^{j}}
\def\Gammajk{\Gamma^{j_{k}}}

\def\Gammai{\Gamma^{i}}

\def\Gi{\mathcal{G}^{i}}
\def\Gj{\mathcal{G}^{j}}
\def\parGi{\partial\overline{\mathcal{G}^{i}}}

\def\xnot{x_{0}}
\def\tnot{t_{0}}
\def\loc{\rm{loc}}
\def\Vm{V_{-}}
\def\Vp{V_{+}}
\def\qm{q_{-}}

\def\teps{\tau_{\epsilon}}
\def\RRn{\mathbb{R}^{n}}

\def\Tb{\bar{T}}
\def\Fcal{\mathcal{F}}
\def\Qcal{\mathcal{Q}}
\def\dist{\text{dist}}
\def\Unp{\text{Unp}}
\def\reach{\text{reach}}
\def\meas{\text{meas}}
\def\overGi{\overline{\mathcal{G}^{i}}}
\def\overGj{\overline{\mathcal{G}^{j}}}
\def\Lip{\text{Lip}}
\def\Exp{\text{Exp}}
\def\vol{\text{vol}}

\newcommand{\subjclass}[1]
{
  \small	
  \textbf{\textit{MSC[2020]:}} #1
}

\newcommand{\keywords}[1]
{
  \small	
  \textbf{\textit{Keywords---}} #1
}

\newcommand{\quotes}[1]{``#1''}

\begin{document}

\title{Solutions of the Second Order Schr{\"o}dinger Wave Equations Near Static Black Holes and Strong Singularities of the Potentials}

\author{
{\bf Igor M. Oliynyk}
\\
E-mail: igor.m.oliynyk@gmail.com
}

\date{}

\maketitle

\begin{abstract}
  We consider a linear Schr{\"o}dinger operator $H = -\Delta + V$ with a strongly singular potential $V$ not bounded
  from below on a non-compact incomplete Riemannian manifold $M$.
  We assume that the negative part of potential $\Vm$ is measurable, and that it does not necessarily belong to
  either local Kato or Stummel classes, and we define new geometric conditions on the growth of $\Vm$
  in a special {\it range control neighborhood (RCN)} such that $H$
  is semibounded from below on functions compactly supported in these neighborhoods.
  We define RCN by means of {\it an inner time metric} that estimates the minimal time for a classical
  particle to travel between any two points on $M$,
  and we assume that $M$ is complete w.r.t.~this metric, i.e. the potential $V$ is classically complete on $M$.
  For the corresponding Cauchy problem of the wave equation $u_{tt} + Hu = 0$, we define locally a Lorentzian
  metric such that its light cone is formed along the minimizing curves with respect
  to the inner time metric, where both an energy inequality and uniqueness of solutions hold.
  Conversely, for well-known Lorentzian metrics of static black holes
  - Schwarzschild, Reissner\text{--}Nordstr{\"o}m, and de Sitter metrics - we study the wave equations
  for the corresponding Schr{\"o}dinger operators,
  and we show that the event horizons of these black holes belong to the RCNs of infinity with respect to
  the inner time metrics, and that all solutions of the mixed problems remain in these neighborhoods indefinitely long.
\end{abstract}

\keywords{Schr{\"o}dinger operator, range control neighborhood, wave equation, Lorentzian metric,
  singular potentials, Schwarzschild metric, de Sitter metric, Reissner\text{--}Nordstr{\"o}m metric}

\subjclass{35A21, 35B30, 35D30, 58J45, 83C57}   

\section{Introduction}
Let $M$ be a $C^{\infty}$ noncompact connected, possibly incomplete,
oriented Riemannian manifold without a boundary with $\dim(M)=n, n\geq 1$.

We denote by $dl$ and $d\mu$ its standard Riemannian metric and corresponding volume measure
and by $L^{2}(M)$ the Hilbert spaces of real-valued square integrable functions with the norm
\begin{equation*}
  \left( f, f\right) = \|f\|^{2} = \int_{M} |f|^{2}d\mu.
\end{equation*}
Throughout this paper, if not noted otherwise, all functions are assumed to be real valued.

Define the Schr{\"o}dinger operator with the potential $V$
\begin{equation}\label{eqn:schrodinger}
  Hu = -\Delta u + Vu, u \in L^{2}(M),
\end{equation}
where
\begin{equation*}
  \Delta u = \rm{div}\nabla u
\end{equation*}
is the Laplace\text{--}Beltrami operator, and in local coordinates $x_{i}, i=1, \dots, n,$
the gradient vector field is defined by
\begin{equation*}
  \nabla^{i} = g^{ij}(x)\partial_{j}
\end{equation*}
with $\partial_{j}\vcentcolon= \frac{\partial}{\partial x_{j}}$, 
and 
\begin{equation*}
  \rm{div}X = \frac{1}{\sqrt{\det(g)}} \partial_{j}\left(\sqrt{\det(g)}X^{j}\right)
\end{equation*}
is the divergence of the vector field $X$. Here $g^{ij} = (g_{ij})^{-1}$,
and $\det(g)$ is the determinant of the metric matrix $g$.
We have used a conventional notation for the summation over repeated indices.

Let us turn to the real-valued potential function $V$. We assume that $V$ is measurable on $M$, and that
\begin{equation*}
  V = \Vp - \Vm,\ \Vp,\ \Vm \geq 0, \Vp\in L^1_{\loc}(M).
\end{equation*}
Therefore, we assume that the negative part $\Vm$ is only measurable, and
we make further assumptions on its growth at infinity and on the behavior of its singularities. 

We define an inner time metric with a minorant function $\qm>0$, $\Vm \leq \qm$, by 
\begin{equation}\label{eqn:metric}
  \tau(p_1, p_2) = \inf_{\gamma}\int_{\gamma}\qm^{-1/2}(x)dl, 
\end{equation}
where the infimum is taken over all piecewise smooth curves $\gamma$ connecting $p_1$ and $p_2\in M$.

A motivation for this study comes from an earlier author's work -
see~\cite{Oleinik1993},~\cite{Oleinik1994}, and~\cite{OleinikProceedings} -  
on the essential self-adjointness of
the Schr{\"o}dinger operator~\eqref{eqn:schrodinger} in $L^{2}(M)$
with a regular potential $V\in  L^{\infty}_{\loc}(M)$.
In~\cite{Oleinik1993} we noticed, in particular,
that defined in~\cite{Rof70} sufficient conditions on the essential
self-adjointness of~\eqref{eqn:schrodinger} in $L^{2}(\RRn)$ imply
{\it the classical completeness} of the potential, i.e. impossibility for a classical particle moving
in a potential field with the potential $V$ to reach infinity in a finite time.
In other words, we showed that $M$ is complete w.r.t.~the metric~\eqref{eqn:metric}
for some $\qm$.

Conversely, ~\cite{Oleinik1994} showed
that the classical completeness of $V$ is one of the sufficient conditions
for the essential self-adjointness of~\eqref{eqn:schrodinger} on any noncompact Riemannian manifold.

The paper~\cite{OleinikProceedings} extends and generalizes the results
of~\cite{DEVINATZ197758, HELLWIG1969279, Oleinik1994, Rofe-Beketov1985} to second-order elliptic operators of the divergent type
$$Hu = -\sum_{i, j = 1}^{n}\partial_{i}a^{ij}(x)\partial_{j}u + Vu, u \in L^{2}(\RRn)$$
with a positive definite matrix $a^{ij}(x), i, j = 1,\dots, n$ for all $x\in \RRn$.

We require that the potential $\Vm$ be classically complete on
the Riemannian manifold $M = \RRn$ with
the metric $g = (a^{ij})^{-1}$, i.e. the matrix inverse to the matrix $a^{ij}$.

The survey in~\cite{Braverman_2002} extends the results of~\cite{Oleinik1993} and~\cite{Oleinik1994}
to Schr{\"o}dinger-type operators (with a singular electric 
potential) that act on sections of Hermitian vector bundles over manifolds.
The paper~\cite{Braverman_2002} contains an extensive bibliography and a good overview (see appendix D)
of the subject of the essential self-adjointness of the Schr\"odinger operators on $\RRn$ and manifolds.
For more recent works on this topic,
see~\cite{Bruse2004, GRUMMT2012480, Gneysu2015HeatKI, Mila2023, NENCIU2008, NENCIU20172619, Prandi2016QuantumCO}.

The classical completeness of $V$ implies that, in particular, the light cone of the metric
$$
d\ell^{2} = -q dt^{2} + dl^{2}
$$
at any point $(0, p)$ of the space-time $[0, \infty)\times M$ has the form
$t = \tau(p, x), x\in M$, and this cone can be extended to infinity
in both time and spatial directions, where the minimizing curves of metric~\eqref{eqn:metric}
starting at $p$ can be uniquely extended to infinity.

In the present paper, our focus is on the global finite propagation speed
of the solutions of the Cauchy problem for the wave equation
\begin{equation}\label{eqn:wave-eq-1}
  u_{tt} + Hu(t, x) = \rho(t, x), t \in [0, T],
\end{equation}
where $u_{tt}:=\frac{\partial^{2}u}{\partial t^{2}}$,
and the source function $\rho\in L_{\loc}^{2}([0, T] \times M)$.
We study dependency cones for~\eqref{eqn:wave-eq-1}, their relation with the light cones above,
and explain how these cones are defined for strongly singular potentials $V$
with $\Vp\in L^{1}_{\loc}(M)$ and its negative part $\Vm$ just being measurable.

Very often, when we study the propagation speed of solutions of~\eqref{eqn:wave-eq-1} or the relative boundedness of $\Vm$ with respect to the operator $-\Delta$ in the sense of
operator norms or operator forms, we assume that $\Vm\in S_{n,\loc}(\RRn)$ or
$\Vm\in K_{n,\loc}(\RRn)$, local Stummel or Kato classes;
please refer to the Appendix at the end of this paper for definitions and
important properties of these classes.

The Kato conditions guarantee that $\Vm$ is form-bounded
w.r.t.~the Laplacian operator $\Delta$
for any relative bound $\delta \in (0, 1)$, namely, this inequality holds for any $\phi\in C_{0}^{\infty}(\RRn)$
and some constant $C = C(\delta)\geq 0$
\begin{equation}\label{eq:relative-bound}
  \int_{\RRn}\Vm \phi^{2}dx \leq \delta \int_{\RRn}|\nabla \phi|^{2}dx + C \int_{\RRn}\phi^{2}dx. 
\end{equation}
The condition~\eqref{eq:relative-bound}, together with a sufficient regularity of $\Vp$, for instance, $\Vp\in L_{\loc}^{2}(\RRn)$,
leads to nonnegativity of $H$ in the sense of forms and its essential self-adjointness in $L^{2}(\RRn)$.

The nonnegativity of $H$ with $\Vm \in K_{n,\loc}(M)$ implies an energy estimate for the solutions of~\eqref{eqn:wave-eq-1}
in specially defined dependency cones where the Cauchy problem is correctly posed.

In Section~\ref{chap:range-control}, we impose growth conditions on $\Vm$ 
along the minimizing curves (w.r.t.~\eqref{eqn:metric}) starting
at a connected compact submanifold of $M$; if these conditions are satisfied
in some neighborhood of this submanifold,
then we call it a {\it range control neighborhood (RCN)},
where the operator~\eqref{eqn:schrodinger} is nonnegative for any smooth
function compactly supported in that RCN.
The introduction of RCNs is one of the main features
of this paper, and the essential self-adjointness of $H$
and the global finite propagation speed of solutions of~\eqref{eqn:wave-eq-1}
are just corollaries of a special open cover of $M$ with RCNs.

The RCN conditions are easily interpretable,
and they are weaker (at least in $\RRn$), as it has been noted in Example~\ref{ex:not_kato_class}, than the conditions formulated for the Kato and Stummel classes of the potential.
These conditions also lead to a direct estimate of the relative bound $\delta$ in~\eqref{eq:relative-bound},
but with $C = 0$, and $\delta$ can be of any value in the interval $(0, 1)$.
Two other important properties of RCNs are as follows:
\begin{enumerate}
\item The singularity points of $\Vm$ cannot belong
  to RCNs of other centers, so the singularity points of $\Vm$ must be centers of their own RCNs;
  in particular, singularity points may belong to connected submanifolds, which are centers of corresponding RCNs;
\item We define RCNs of infinity w.r.t.~the metric~\eqref{eqn:metric}; in Section~\ref{chap:examples}
  we present examples of RCNs of infinity for the static black holes - Schwarzschild, Reissner-Nordstr{\"o}m, and de Sitter spaces.
\end{enumerate}

The local nonnegativity property of the Schr{\"o}dinger operator
allows us to consider a mixed problem for the related wave equation~\eqref{eqn:wave-eq-1},
and we derive an energy inequality for its solutions
at a domain of the dependency cone contained in a past light cone of the above metric $d\ell^{2}$ defined on the RCN;
thus, we prove the uniqueness and existence of solutions of a corresponding Cauchy\text{--}Dirichlet problem (see Sections~\ref{chap:uniqueness} and~\ref{chap:existence}).

Another interesting fact is that the relative bound value $\delta \in (0, 1)$ is related to the slope
of the corresponding dependency cone; the smaller the value is the steeper the slope is,
and, vice versa, shallow cones correspond to $\delta$ values close to one.

In Section~\ref{chap:gfps}, we study the global propagation speed of solutions of
the wave equation on the entire $M$ and the essential self-adjointness
of the operator~\eqref{eqn:schrodinger}. 
We assume here that there exists an open cover of $M$ consisting of RCNs
and additional conditions ensuring semiboundedness of the Schr{\"o}dinger operator.
These assumptions lead to a proof
of the global finite propagation speed of solutions of the Cauchy problem on $M$.
The essential self-adjointness of the Schr{\"o}dinger operator follows from
the Berezansky theorem - see Theorem 6.2 in~\cite{berezansky1978self}, which defines
the method of hyperbolic equations to answer the essential self-adjointness question for any symmetric operator in a Hilbert space. This section provides a more detailed bibliography of this method.

In Section~\ref{chap:examples}, we study the domain of dependency cones
defined in Section~\ref{chap:uniqueness} and their relationship with the metric $d\ell^{2}$.

This metric implies \quotes{unbounded speed of light} near singularities of $\Vm$;
for the case of the hydrogen atom, we define a Lorentzian metric for the corresponding wave equation,
and we derive a time\text{--}energy uncertainty condition from a special form of the light cones of this metric.

Conversely, for known static black hole Lorentzian metrics - Schwarzschild, Nordstr{\"o}m\text{--}Reissner, and de Sitter -
we define wave equations for the corresponding Schr{\"o}dinger operators. We determine that the event horizons
of these metrics are infinities in metric~\eqref{eqn:metric}, we can choose their neighborhoods to be
RCNs of infinities, and the results of Section~\ref{chap:uniqueness} imply that the solutions of the wave equations
in the neighborhoods of corresponding event horizons never reach their boundaries. 

Our focus is on the qualitative behavior of the solutions of the wave equation~\eqref{eqn:wave-eq-1},
so we avoid other generalizations by letting the manifold $M$ be infinitely smooth,
by considering a simple Laplacian instead of a second-order symmetric operator,
and by defining the Schr{\"o}dinger operator
on real-valued functions instead of sections of Hermitian vector bundles, etc. 

\section{Basic Notations, Main Assumptions and Conditions}
We define the space of Lipschitz functions $f \in \Lip^{0,\alpha}_{\loc}(M)$, if for any compact and measurable $\Fcal\subset M$
\begin{equation*}
  |f(x) - f(y)| \leq C(\Fcal) \dist(x, y)^{\alpha}\ \text{for some }\alpha > 0\ \text{and any }x, y \in \Fcal.
\end{equation*}
Sometimes, we use local coordinates $x_{i}, i = 1, 2,\ ...,\ n$ on $M$ with the Riemannian
metric $g_{ij}(x) = \left\langle\frac{\partial}{\partial x_{i}}, \frac{\partial}{\partial x_{j}}\right\rangle$ for
the coordinate vectors $\frac{\partial}{\partial x_{i}}, \frac{\partial}{\partial x_{j}} \in T_{x}M$,
the tangent space at $x \in M$.

$W_{\loc}^{1,2}(M)$ denotes the Sobolev space
of locally square integrable functions and their first derivatives
with the norm 
\begin{equation*}
  ||f||_{1,2}(U) = \left(\int_{U}\left(|\nabla f|^{2} + |f|^{2}\right)d\mu\right)^{1/2}
\end{equation*}
for each open set $U\Subset M$,
where $|\nabla f(x)| \vcentcolon= \sqrt{\left\langle\nabla f(x), \nabla f(x)\right\rangle}$.
We also denote by $W^{1,2}_{0}(U)$ the Sobolev space of functions with compact support on $U$.
If not noted otherwise, we use a zero subscript to denote compactly supported functions
of the corresponding space.

We use the notation $\dist_g$ for the usual distance on $M$ and $\dist_\tau$
for the distance due to the metric~\eqref{eqn:metric}. Note that $M$ may not be complete
w.r.t.~the original metric $\dist_g$.

Denote by $B_g(x, r) = \{y \in M: \dist_g(y, x) < r\}$ an open ball in metric $g$
with the center at $x\in M$ and radius $r > 0$.

\noindent Denote a closed ball about $p\in M$ with the radius $\tzau > 0$
w.r.t.~the metric~\eqref{eqn:metric}
by
\begin{equation}\label{eqn:tau-point}
  \begin{aligned}
    &\mTa_{p,\tzau} = \{x \in M: \tau(p, x) < \tzau\},\\
    &\text{its closure}\ \bmTa_{p,\tzau} = \{x \in M: \tau(p, x) \leq \tzau\},\\
    & \text{and its boundary}\ \partial \bmTa_{p,\tzau} = \{x \in M: \tau(p, x) = \tzau\}.
  \end{aligned}
\end{equation}

We impose the following condition on the classical completeness of the potential.

\noindent \textbf{Condition A. Classical Completeness of the Potential.}

\noindent The manifold $M$ is geodesically complete w.r.t.~\eqref{eqn:metric}, i.e.,
according to the Hopf\text{--}Rinow theorem, it means that either of two equivalent conditions
\begin{enumerate}[label=\textbf{A.\arabic*}, ref=A.\arabic*]\label{eqn:classical-completeness}
\item\label{eqn:classical-completeness:a} The metric space $(M, \dist_\tau)$ is complete, or
\item\label{eqn:classical-completeness:b} Any closed and bounded set w.r.t.~$\dist_\tau$  on $M$ is compact
\end{enumerate}
hold.

This condition of the classical completeness of the potential at infinity
for the self-adjointness of the Schr{\"o}dinger operator
was formulated in~\cite{Oleinik1994}, and it returns to the original
E.~C.~Titchmarsch~\cite{titchmarsh_1949} and D.~B.~Sears~\cite{sears_1950}
conditions for spherically symmetric potentials. Going forward
we will always use the term \textit{infinity} w.r.t.~the metric~\eqref{eqn:metric}.

The next condition defines the regularity of $\qm$ on $M$.

\noindent \textbf{Condition B. Regularity Conditions for Potential.}

\noindent Define the set $\Qcal_{-} \vcentcolon= \{\xnot\in M: \limsup_{x\to\xnot}\qm(x)=\infty\}$;
we require that $\meas(\Qcal_{-}) = 0$.

\begin{enumerate}[label=\textbf{B.\arabic*}, ref=B.\arabic*]
\item\label{prop:q-minus:a} We require that $\qm(x) \to \infty$ when $x\to\Qcal_{-}$,
  so that we can set $\qm^{-1/2}(x) = 0$ for $x\in\Qcal_{-}$; hence, $\qm^{-1/2}$ is continuous on $\Qcal_{-}$,
  and $\Qcal_{-}$ is a closed set.
\item\label{prop:q-minus:b} Function $\qm^{-1/2}\in \Lip_{\loc}^{0, 1}(M\setminus \Qcal_{-}) \cap W_{\loc}^{1, 2}(M)$.
\end{enumerate}

Note that the relation~\ref{prop:q-minus:b} is a standard
regularity condition in the Titchmarsh\text{--}Sears theorem
at infinity, where $\Vm$ is assumed to be locally bounded, i.e., when $\Qcal_{-} = \emptyset$.

We want minimizing curves w.r.t.~the metric~\eqref{eqn:metric} to be regular and unique in 
any small neighborhood of any point $x\in M\setminus\Qcal_{-}$. Under condition~\ref{prop:q-minus:b},
the metric~\eqref{eqn:metric} is Lipschitz in a small open neighborhood $U(x)\subset M\setminus\Qcal_{-}$, and,
as noted in~\cite{S_mann_2018} (see also an overview in~\cite{lytchak_yaman}),
all the minimizing curves starting at $x$ are of regularity $\Lip^{1,1}(U)$,
i.e., the continuously differentiable curves whose first derivatives belong to 
$\Lip^{0,1}(U)$.

\textbf{Submanifolds of Positive Reach}

For a connected compact submanifold $\Fcal\subset M$, define a set
$\Unp_{M}(\Fcal) =\{x\in M: \text{ exists and is unique to } a\in\Fcal \text{ such that }\dist_{\tau}(x, \Fcal) = \tau(x, a)\}$.
Thus, we can define a unique map
\begin{equation}\label{eqn:xi-map}
\xi_{\Fcal, M}\vcentcolon\Unp_{M}(\Fcal) \to \Fcal
\end{equation}
satisfying $\xi_{\Fcal, M}(x) = a$ with $\tau(\Fcal, x) = \tau(a, x)$.

Furthermore, for each $a\in\Fcal$ we define
$\reach_{M}(a, \Fcal)\vcentcolon= \sup\{\tzau:\{x\in M: \tau(x, a) < \tzau\}\subset \Unp_{M}(\Fcal)\}$,
and, finally, 
\begin{equation}\label{eqn:reach-set}
  \reach_{M}(\Fcal) = \inf_{a}\reach_{M}(a, \Fcal).
\end{equation}
Therefore, $\Fcal$ is of a \textit{positive reach} if $\reach_{M}(\Fcal) > 0$.

The sets of positive reach were defined by H.~Federer in~\cite{Federer_1959}, \S 4
for $\RRn$. V.~Bangert in~\cite{Bangert1982} proved that on a smooth complete Riemannian manifold,
the condition for a set to be of positive reach largely depends on the atlas; it is local in nature,
and it does not depend on the metric. A.~Lytchak~\cite{lytchak2023note} established necessary
regularity conditions for a connected closed submanifold $\Fcal\subset M,\ \text{dim}(\Fcal) = m < n$
to be a set of positive reach; very informally, these are $\Lip^{1,1}$-submanifolds
whose boundary points $x\in\partial \Fcal$ have the tangent space $T_{x}\Fcal$ isomorphic
to a half-space in $\mathbb{R}^{m}$.

Throughout this paper, we always assume and consider submanifolds $\Fcal$ that have a positive reach.

The definition~\eqref{eqn:reach-set} implies, in particular, that if $\reach_{M}(\Fcal) = \tzau$,
then for any $x\in M: \tau(x, \Fcal) < \tzau$, there exists a unique minimizing curve
connecting $x$ and $\xi_{\Fcal, M}(x)$ with $\tau(x, \xi_{\Fcal, M}(x)) < \tzau$.

\noindent \textbf{Neighborhood of Infinity}

\noindent An open domain $\Omega\subset M$ with a closed (in the topological sense) boundary
$\partial\Omega\subset M, \dim \partial\Omega = n - 1$, is
the \textit{neighborhood of infinity} if
\begin{equation}\label{dfn:neign-infinity}
  \reach_{\Omega}(\partial\Omega) = \infty.
\end{equation}
In definition~\eqref{dfn:neign-infinity}, we stress
that definitions~\eqref{eqn:xi-map} and~\eqref{eqn:reach-set} are restricted to the domain $\Omega$, so
that for each $p\in\partial \overline{\Omega}$,
there is a unique minimizing curve $\gamma_{p}\subset \overline{\Omega}$,
the closure of $\Omega$ in~\eqref{eqn:metric},
which can be extended to infinity in $\Omega$.

\noindent \textbf{Infinity Covers}

We assume that there exists a countable set of neighborhoods of infinity $\Gi$ such that 
\begin{equation*}
  \begin{aligned}
    & \Gi \cap \Gj = \emptyset \text{ for } i \ne j,\\
    & \Gi \subset B_g(x_{i}, d_{i}) \text{ for some } x_{i}\text{ and } d_{i} > 0.
  \end{aligned}
\end{equation*}

\noindent For each $\Gi$, we define these sets
\begin{equation*}
  \begin{aligned}
    & \Gi_{\tzau} = \{x \in M: 0 < \tau(\partial \overGi, x) < \tzau\},\\
    & \parGi_{\tzau} = \partial \overGi \cup \{x \in \Gi: \tau(\partial \overGi, x) = \tzau\}, \text{ and}\\
    & \overGi_{[\tau_1, \tau_2]} = \{x \in \Gi: \tau_1 \leq \tau(\partial \overGi, x) \leq \tau_2\},
  \end{aligned}
\end{equation*}
where $\overGi$ denotes the closure of $\Gi$ w.r.t.~\eqref{eqn:metric}.

The function $\qm^{-1/2}$ is continuous on $\Qcal_{-}$,
and we assume that $\Qcal_{-}$ can be represented as a countable union of connected submanifolds
\begin{equation*}
  \Qcal_{-} = \cup_{j}\Gammaj,\ \dim\left(\Gammaj\right) < \dim(M), j = 1,\ 2,\ ... 
\end{equation*}
where we assume that $\Gammaj$ possesses a positive reach w.r.t.~\eqref{eqn:metric}.

\noindent Assume further that there exists $\bar{\tau} > 0$ with
\begin{equation*}
  \begin{aligned}
    & \inf_{i \ne j} \dist_\tau(\Gammai, \Gammaj) \geq \bar{\tau},\\
    & \inf_{i \ne j} \dist_\tau(\partial \overGi, \partial \overGj) \geq \bar{\tau},\text{ and}\\
    & \inf_{i,j} \dist_\tau(\partial \overGi, \Gammaj) \geq \bar{\tau}.
  \end{aligned}
\end{equation*}

If we assume that the neighborhoods $\mTa_{p,\tzau}\subset \Unp_{M}(p)$
or $\mTa_{\Gammaj,\tzau}\subset\Unp_{M}(\Gammaj)$ for some $\tzau > 0$, then
we can define the Riemannian metrics in these neighborhoods and in $\Gi$ by 
\begin{equation}\label{eqn:riemann-tau}
  dl^{2} = \qm(x)d\tau^{2} + d\omega^{2}, \\
  x \in \mTa_{p,\tzau}\setminus\{p\}\left(\mTa_{\Gammaj,\tzau}\setminus \Gammaj\ \text{or } \Gi\right),
\end{equation}
where $d\omega^{2}$ is the metric induced on submanifold $\tau(p,x) = \widetilde{\tau},\ \text{for a.e., }\widetilde{\tau} \leq \tzau\\
(\tau(\parGi,x) = \widetilde{\tau}\ \text{or }
\tau(\Gammaj,x) = \widetilde{\tau}\ \text{for a.e }\widetilde{\tau} \leq \tzau)$.
Note that the regularity of minimizing curves and the fact that the
function $\tau(p, \cdot) \in \Lip^{1,1}(\mTa_{p,\tzau})$
ensures the local regularity of such submanifolds,
and the form \eqref{eqn:riemann-tau} of the Riemannian metric is well defined.

Let us consider the volume density of the induced metric $d\omega^{2}$.
For any fixed $0 < \toau \leq \tzau$ and any compact connected submanifold $C \subset M$
of positive reach with $\bmTa_{C,\tzau}\vcentcolon=\{x\in M: \tau(x, C)\leq \tzau\}\subset \Unp_{M}(C)$ we define a map
\begin{equation}\label{eqn:contraction-map}
  \begin{aligned}
    & \zeta\vcentcolon\partial \bmTa_{C,\tzau} \to \partial \bmTa_{C,\toau}, \\
    & \zeta(x) = y \text{ iff } \xi_{C}(x) = \xi_{C}(y).
  \end{aligned}
\end{equation}
Note that we may assume that both submanifolds $\bmTa_{C,\tzau}$ and $\partial \bmTa_{C,\toau}$ are of class $\Lip^{1, 1}$.

We prove the following
\begin{lemma}\label{lemma:regularity-zeta}
  The map $\zeta\in \Lip^{0, 1}$ and uniform on $\partial\bmTa_{C,\tzau}$.
\end{lemma}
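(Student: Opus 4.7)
The plan is to pass to tubular coordinates around $C$ induced by minimizing $\tau$-curves. Since $\mTa_{C,\tzau}\subset\Unp_{M}(C)$, every $x\in\mTa_{C,\tzau}$ is uniquely represented as $\gamma_{a}(t)$, where $a=\xi_{C}(x)\in C$, $t=\tau(x,C)\in[0,\tzau]$, and $\gamma_{a}$ is the unique $\Lip^{1,1}_{\loc}$ minimizing curve emanating from $a$, parametrized by $\tau$-length. In these coordinates $\zeta$ is the identity on the base $C$ and replaces the $t$-coordinate $\tzau$ by $\toau$, so the problem reduces to showing that this \quotes{sliding} map is uniformly Lipschitz with respect to the ambient distance $\dist_g$. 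I would first verify that $\zeta$ is a well-defined bijection from $\partial\mTa_{C,\tzau}$ onto $\partial\mTa_{C,\toau}$, using single-valuedness of $\xi_{C}$ on the tube (positive reach) and uniqueness of each $\gamma_{a}$ up to $\tau$-distance $\tzau$.

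The next step is to gather uniform geometric bounds. By Condition A and Hopf-Rinow, the closed tube $\mTa_{C,\tzau}$ is compact; by Condition B the function $\qm^{-1/2}$ is continuous on this set and locally Lipschitz away from $\Qcal_{-}$. The positive reach of $C$ together with the separability condition \eqref{eqn:separability} keeps the interior leaves $\partial\mTa_{C,s}$ for $s\in[\toau,\tzau]$ at positive $\tau$-distance from $\Qcal_{-}$, so $\qm^{-1/2}$ is uniformly Lipschitz in a neighborhood of the slab $\bigcup_{s\in[\toau,\tzau]}\partial\mTa_{C,s}$. The decomposition \eqref{eqn:riemann-tau} then gives $dl^{2}=\qm\,d\tau^{2}+d\omega^{2}$, so the $\tau$-curves are $g$-normal to the leaves and the induced metric on $\partial\mTa_{C,s}$ is precisely $d\omega^{2}|_{\tau=s}$.

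The key estimate compares the induced metrics on two different leaves. Given a rectifiable curve $\sigma\subset\partial\mTa_{C,\tzau}$ joining $x_{1},x_{2}$, its image $\zeta\circ\sigma\subset\partial\mTa_{C,\toau}$ is obtained by sliding each point along its $\tau$-normal curve. Using the $\Lip^{1,1}$-regularity of these curves and the Lipschitz bound \eqref{eqn:local-holder} on $\qm^{-1/2}$, the transverse variation field along the foliation satisfies a Gronwall-type integral inequality, which yields a pointwise comparison
\begin{equation*}
  d\omega^{2}|_{\tau=\toau}(v,v)\leq K\,d\omega^{2}|_{\tau=\tzau}(v,v),\qquad K=K(C,\tzau,\toau),
\end{equation*}
for every transverse tangent vector $v$, with $K$ uniform over $C$ by compactness. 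Integrating this along $\sigma$ and infimizing over connecting curves yields $\dist_g(\zeta(x_{1}),\zeta(x_{2}))\leq\sqrt{K}\,\dist_g(x_{1},x_{2})$, which is the required uniform Lipschitz bound.

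The main obstacle is precisely this inter-leaf metric comparison without $C^{2}$ regularity of $\qm^{-1/2}$: classical Jacobi field or Rauch comparison theorems are not directly available. I would circumvent this by working with the integral form of the first variation of $\tau$-energy, where Gronwall's inequality demands only the Lipschitz bound \eqref{eqn:local-holder}. Uniformity in the base point $a\in C$ is then obtained by covering $\partial\mTa_{C,\tzau}$ by finitely many coordinate patches (compactness once more) and taking the largest local constant.
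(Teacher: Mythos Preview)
Your outline is a legitimate route to the lemma, but it is genuinely different from the paper's proof. The paper does not attempt any intrinsic comparison of the leaf metrics $d\omega^{2}|_{\tau=\toau}$ and $d\omega^{2}|_{\tau=\tzau}$. Instead it reduces the statement to Federer's Euclidean result (Theorem~4.8(5) in~\cite{Federer_1959}) that the nearest-point projection onto a set of positive reach is Lipschitz: one fixes $a\in\partial\mTa_{C,\tzau}$, passes to normal coordinates at $a_{1}=\zeta(a)\in\partial\mTa_{C,\toau}$ via the exponential map, observes that in this chart the minimizing segments from $a$ and from a nearby $b$ to their images $a_{1},b_{1}$ are Euclidean shortest segments to $\partial\mTa_{C,\toau}$, applies Federer's inequality $|a_{1}-b_{1}|\leq K|a-b|$ in the chart, and finishes by the uniform comparability of the chart metric with $g$ on the compact tube. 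Your approach is more intrinsic and avoids the exponential-chart detour, at the price of having to justify the Gronwall step for the transverse variation in $\Lip^{0,1}$ regularity; the paper's approach trades that analytical work for a citation, but needs the extra ingredient of parallel transport along $\partial\mTa_{C,\toau}$ to identify the normal directions at $a_{1}$ and $b_{1}$ inside a single chart.

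One point in your sketch deserves tightening: the inequality $d\omega^{2}|_{\tau=\toau}\leq K\,d\omega^{2}|_{\tau=\tzau}$ is exactly the content of the lemma, and deriving it via ``integral first variation plus Gronwall'' requires a bound on how the second fundamental form (or, equivalently, $\partial_{\tau}g_{\omega}$) of the leaves is controlled by $g_{\omega}$ itself. With only $\qm^{-1/2}\in\Lip^{0,1}_{\loc}$ this form is merely $L^{\infty}$, which is enough for Gronwall, but you should state explicitly that compactness of the slab $\{\toau\leq\tau\leq\tzau\}$ together with~\eqref{eqn:local-holder} gives a uniform $L^{\infty}$ bound on the Lie derivative $\mathcal{L}_{\partial_{\tau}}g_{\omega}$; otherwise the ``Gronwall-type inequality'' remains an assertion rather than an argument.
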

\begin{proof}
  A similar statement can be found in Theorem~4.5(8) of~\cite{Federer_1959}; it was shown
  that in the Euclidean case with $M=\RRn$ for any $a$ and $b\in \Unp_{\RRn}(C)$ we have this inequality
  \begin{equation}\label{eqn:xi-lipschitz}
    |\xi_{C}(a) - \xi_{C}(b)| < K |a - b|
  \end{equation}
  with some constant $K = K(\reach_{M}(C), \max(\dist(a, C), \dist(b, C))$, and this constant is uniform as long as
  $\reach_{M}(C) > \max(\dist(a, C), \dist(b, C))$. We have to adopt the proof of this theorem for any submanifold $C\subset M$
  and metric~\eqref{eqn:metric}.

  Select any $a\in \partial\bmTa_{C,\tzau}$; note that $\xi_{C}(\xi_{\partial\bmTa_{C,\toau}}(a)) = \xi_{C}(a)$, so let us denote
  $a_{1} = \xi_{\partial\bmTa_{C,\toau}}(a)$, and we define normal coordinates at $a_{1}$ with an orthonormal basis
  at $T_{a_1}(M) = T_{a_1}(\partial\bmTa_{C,\toau}) \oplus \{\lambda v_{a}\}, |v_{a}|= 1, v_{a}\perp T_{a_1}(\partial\bmTa_{C,\toau}),
  \lambda \in \mathbb{R}$ such that
  $\Exp(\tau(a_{1}, a)v_{a}) = a$ for the corresponding exponential map $\Exp\ \vcentcolon \mathcal{E} \cap T_{a_1}(M) \to M$
  defined on some open neighborhood $\mathcal{E}$ of the origin in $T_{a_1}(M)$. For convenience, we identify
  the origin $\{0\}\in \mathcal{E}$ with the point $a_{1}$ in the neighborhood of the affine space $\mathcal{E}$,
  so that the entire interval $[a_1, a_1 + \tau(a_{1}, a)v_{a}]\subset \mathcal{E}$.

  Similarly, we select $b\in \partial\bmTa_{C,\tzau}$ such that $b_{1} = \xi_{\partial\bmTa_{C,\toau}}(b)$ and the corresponding
  normal coordinates at $b_{1}$ with an orthonormal basis at
  $T_{b_1}(M) = T_{b_1}(\partial\bmTa_{C,\toau}) \oplus \{\lambda v_{b}\}, |v_{b}|= 1, v_{b}\perp T_{b_1}(\partial\bmTa_{C,\toau}),
  \lambda \in \mathbb{R}$ and with the
  corresponding exponential map $\Exp(\tau(b_{1}, b)v_{b}) = b$.

  Note here that $\tau(b_{1}, b) = \tau(a_{1}, a)$, and that the vector $\tau(b_{1}, b)v_{b}$ can be obtained as
  the parallel transport of $\tau(a_{1}, a)v_{a}$ in the Riemannian connection on $M$
  with respect to the conformal metric~\eqref{eqn:metric} along a geodesic curve on $\partial\bmTa_{C,\toau}$
  connecting $a_1$ and $b_1$. 

  We can select $b$ so close to $a$ that the interval $[b_1, b_1 + \tau(b_{1}, b)v_{b}] \subset \mathcal{E}$,
  and, given the fact that vectors $\tau(a_{1}, a)v_{a}$ and $\tau(b_{1}, b)v_{b}$ are normal to
  $T_{a_1}(\partial\bmTa_{C,\toau})$ and $T_{b_1}(\partial\bmTa_{C,\toau})$ respectively, then
  $\tau(a_{1}, a) = \tau(b_{1}, b)$ are the shortest Euclidean distances on $\mathcal{E}$ between $a$ and $a_1$ and
  $b$ and $b_1$.

  Therefore, the inequality 
  \begin{equation*}
    |\xi_{\partial\bmTa_{C,\toau}}(a) - \xi_{\partial\bmTa_{C,\toau}}(b)| < K |a - b|, 
  \end{equation*}
  similar to~\eqref{eqn:xi-lipschitz}, holds on $\mathcal{E}$, and, given a uniform equivalence of both Euclidean metric and metric $g$ on the compact domain $\bmTa_{C,\tzau}$, the statement of the lemma follows.
\end{proof}

Given Lemma~\ref{lemma:regularity-zeta}, we can define the corresponding
pullback map $\zeta^{*}\vcentcolon \Lip^{0,1}(\partial \bmTa_{C,\toau}) \to \Lip^{0,1}(\partial \bmTa_{C,\tzau})$, and
the density measure at $y\in \partial \bmTa_{C,\toau}$ can be correctly defined by
\begin{equation}\label{eqn:sigma-squared}
  \sigma^{2}(y) \vcentcolon= \frac{\left.\text{dVol}\right|_{\zeta^{*}g_{\toau}}}{\left.\text{dVol}\right|_{g_{\tzau}}},
\end{equation}
where $g_{\tzau}$ and $g_{\toau}$ are the metrics on $\partial \bmTa_{C,\tzau}$ and $\partial \bmTa_{C,\toau}$
induced from $g$ on $M$, and the fraction on the right-hand side of~\eqref{eqn:sigma-squared} is
the Radon\text{--}Nikodym derivative of two Riemannian measures.
For a technical convenience, we used the square power in the above expression.
For example, for a spherically symmetric potential $\qm(r)$
in the neighborhood of the origin, we have $\sigma = c_{n}r^{\frac{n-1}{2}}$ in polar coordinates on $\RRn$.

Note also that the definitions~\eqref{eqn:contraction-map} and~\eqref{eqn:sigma-squared} are valid for
the case when $\dim(C) = n - 1$ and $\toau = 0$. If $\dim(C) < n - 1$, then we may set $\sigma = 0$ on $C$.

The volume element in $\mTa_{C,\tzau}$ can be defined as
\begin{equation}\label{eqn:volume-element}
  d\mu  = \qm^{1/2}(y)\sigma^{2}(y)d\tau \left.\text{dVol}\right|_{g_{\tzau}}.
\end{equation}
Note that in this formula the volume measure $\left.\text{dVol}\right|_{g_{\tzau}}$ is independent of $y$.

\noindent \textbf{Condition C. Admissible Open Covers on $M$.}
There exists $\tzau > 0$ such that an open cover on $M$ is defined by these components
and properties below
\begin{enumerate}[label=\textbf{C.\arabic*}, ref=C.\arabic*]
\item\label{cond_c_1} All open neighborhoods $\mTa_{\Gammaj, \tau_{j}}\subset\Unp_{M}(\Gammaj), j > 0$
  are with $\reach_{M}(\Gammaj) = \tau_{j}\geq\tzau$.
\item\label{cond_c_2} For neighborhoods of infinity $\overGi, i > 0$ all boundaries $\parGi$
  have a positive $\reach_{M\setminus \Gi}(\parGi) = \tau_{i}\geq\tzau$, and,
  due to the definition~\eqref{dfn:neign-infinity}, $\reach_{\Gi}(\parGi) = \infty$.
\item\label{cond_c_3} The cover is locally finite, i.e., any compact subset $K\subset M$
  can be covered by a finite number of neighborhoods of infinity $\Gi$ and of the neighborhoods
  $\mTa_{k_{\alphai},\tau_{\alphai}}, i = 1, 2,\dots, l$ of closed and connected submanifolds,
  $\dim(k_{\alphai}) < n$, and with $\reach_{M}(k_{\alphai}) = \tau_{\alphai}\geq \tzau$.
  Every point $x\in M$ can be covered with no more than $m$ open covers, i.e., $M$ has a finite {\it covering
  dimension}.
\item\label{cond_c_4} Minimal Cover Intersection. There exists $0 < \teps < \tzau$ such that for any $x\in M$ there exists an element of cover $\mTa_{k_{\alphai}, \tau_{\alphai}}$ such that the distance
  $\tau\left(x, \partial \mTa_{k_{\alphai}, \tau_{\alphai}}\right) \geq \teps$.
  This distance is taken along the unique segment connecting $k_{\alphai},\ x$ and $\partial \mTa_{k_{\alphai}, \tau_{\alphai}}$.
\end{enumerate}

Condition C uses sets of positive reach w.r.t.~the metric~\eqref{eqn:metric} for all open covers.

The conditions~\ref{cond_c_1} and~\ref{cond_c_2} imply that both~$\Gammaj$
and~$\parGi$, by definition, are at the centers of their corresponding positive
reach neighborhoods.

The condition~\ref{cond_c_3} leaves an option to have a variety of centers $k_{\alphai}$, not necessary points, for these covers. 

The property in~\ref{cond_c_4} will be used to build a special partition of unity subordinate to this cover.

We consider examples of sufficiently regular spherically symmetric potentials, so that conditions B and C can be easily verified.

From definition~\eqref{eqn:riemann-tau}, we derive the expression for the square gradient norm
\begin{equation}\label{eqn:gradfunc}
  \left|\nabla f\right|^{2} = \qm^{-1}\left(\frac{\partial f}{\partial \tau}\right)^{2} + \left|\nabla_{\omega} f\right|^{2}.
\end{equation}

We may assume that $\qm > 1$ in $\mTa_{p,\tzau}$, $\mTa_{\Gammaj,\tzau}$, so that
from~\eqref{eqn:gradfunc}, the following inequality holds:
\begin{equation}\label{eqn:limited-grad-tau}
|\nabla \tau(x) | = \qm^{-1/2}(x) \leq 1, x \in \mTa_{p,\tzau}\setminus\{p\}\left(\mTa_{\Gammaj,\tzau}\setminus \Gammaj\right).
\end{equation}

Throughout this paper, for brevity of notation, we use $\epsilon > 0$ and $\delta > 0$ to denote small positive constants; these values could differ in a different contexts.

\section{RCN of the Potential of the Schr{\"o}dinger Operator}\label{chap:range-control}
\noindent \textbf{Definition. RCN of the potential.}

\noindent A neighborhood of positive reach $\bmTa_{p,\tzau}(\overGi, \bmTa_{\Gammaj,\tzau})$
is in \textit{range control of the potential }of operator~\eqref{eqn:schrodinger}, if for some $\tzau > 0$, the function
\begin{equation}\label{eqn:w}
  w(x) \vcentcolon=\qm^{3/4}(x) \sigma(x) \tau(x)
\end{equation}
is locally absolutely continuous on $(0, \tzau)$ (or on $(0, \infty)$ for $\overGi$) w.r.t.~parameter $\tau$
along all the minimizing curves connecting $p$ and $\partial \bmTa_{p,\tzau}$
($\Gammaj$ and $\partial \bmTa_{\Gammaj,\tzau}$, or any two disjoint boundaries $\parGi_{[0, \tzau]}$ for any $\tzau$), and 
the following
conditions for some large $\Czero(\tzau) > 0$ and small $\epszero(\tzau) > 0$ hold:
\begin{subequations}\label{dfn:positive-neighborhood-upd}
  \begin{align}
    & \tau \left|\frac{\partial \log\left(w(x)\right)}{\partial \tau}\right| <
      \frac{\Czero}{2}\sqrt{\frac{1-A - \del0}{1 + \Czero^{2}\epszerosquare}}
      \label{dfn:positive-neighborhood-upd:a}\\
    & \qm\tau <  \frac{\epszero}{2}\sqrt{\frac{A - \del0}{1 + \Czero^{2}\epszerosquare}}\label{dfn:positive-neighborhood-upd:b}
  \end{align}
\end{subequations}
\noindent with $0 < A < 1$ and $\del0 > 0$ such that $A - \del0>0$ and $A + \del0 < 1$,
and for all $x \in \bmTa_{p,\tzau}(\overGi, \bmTa_{\Gammaj,\tzau})$
with minorant function $\qm$ in~\eqref{eqn:metric} and
the Radon\text{--}Nikodym derivative $\sigma^{2}$ defined in~\eqref{eqn:sigma-squared}.

The conditions~\eqref{dfn:positive-neighborhood-upd} are symmetric in nature, so sometimes we use the name
\textit{dual potential} for the expression $\left|\frac{\partial \log\left(w(x)\right)}{\partial \tau}\right|$
in~\eqref{dfn:positive-neighborhood-upd:a}.

For the RCN of infinity neighborhoods $\overGi_{\tzau}$ this definition implies that $\Czero = \Czero(\tzau)$, $\epszero = \epszero(\tzau)$, and $A$ and $\del0$ are the same for any $\tzau$. 

We have a simple necessary condition of the RCN
\begin{corollary}\label{cor:positive-positive-neighborhood-upd}
  From definition~\eqref{dfn:positive-neighborhood-upd} of the RCN, the following condition holds:
  \begin{equation}\label{eqn:positive-neighborhood-necessary}
    \qm\tau^{2}\left|\frac{\partial \log\left(w(x)\right)}{\partial \tau}\right| < 1/16.
  \end{equation}
\end{corollary}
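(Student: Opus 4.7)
The plan is to simply multiply the two strict inequalities \eqref{dfn:positive-neighborhood-upd:a} and \eqref{dfn:positive-neighborhood-upd:b} from the RCN definition, and then squeeze the resulting right-hand side below $1/16$ using two elementary inequalities.

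First I would multiply (a) and (b) term by term (both sides are positive) to obtain
\begin{equation*}
\qm\tau^{2}\left|\frac{\partial \log w}{\partial \tau}\right|
\;<\;
\frac{\Czero\,\epszero}{4}\cdot\frac{\sqrt{(1-A-\del0)(A-\del0)}}{1+\Czero^{2}\epszerosquare}.
\end{equation*}

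Next I would bound the numerator and denominator separately. For the numerator, AM--GM applied to the two nonnegative numbers $1-A-\del0$ and $A-\del0$ (whose sum is $1-2\del0<1$) gives
\begin{equation*}
\sqrt{(1-A-\del0)(A-\del0)} \;\leq\; \frac{(1-A-\del0)+(A-\del0)}{2} \;=\; \frac{1-2\del0}{2} \;<\;\frac{1}{2}.
\end{equation*}
For the $\Czero\epszero$ factor, I would use the trivial identity $1+\Czero^{2}\epszerosquare-2\Czero\epszero=(1-\Czero\epszero)^{2}\geq 0$, which yields
\begin{equation*}
\frac{\Czero\,\epszero}{1+\Czero^{2}\epszerosquare} \;\leq\;\frac{1}{2}.
\end{equation*}

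Combining these two bounds with the multiplied inequality produces
\begin{equation*}
\qm\tau^{2}\left|\frac{\partial \log w}{\partial \tau}\right|
\;<\;\frac{1}{4}\cdot\frac{1}{2}\cdot\frac{1}{2}\;=\;\frac{1}{16},
\end{equation*}
which is exactly \eqref{eqn:positive-neighborhood-necessary}. The strict inequality is preserved because (a) and (b) were themselves strict. There is no real obstacle here; the only thing to watch is that the two AM--GM-style estimates are used in the right places and that the $\Czero\epszero$ cancellation is recognized as a perfect square, but both are standard.
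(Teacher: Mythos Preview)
Your proof is correct and follows essentially the same route as the paper: multiply the two strict RCN inequalities, then bound $\dfrac{\Czero\epszero}{1+\Czerosquare\epszerosquare}\leq\tfrac{1}{2}$ and $\sqrt{(1-A-\del0)(A-\del0)}<\tfrac{1}{2}$ separately. The paper carries out the second bound by a direct algebraic expansion rather than invoking AM--GM by name, but the content is identical.
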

\begin{proof}
  Indeed, the left-hand side of \eqref{eqn:positive-neighborhood-necessary} is just the product of the left-hand side
  expressions in \eqref{dfn:positive-neighborhood-upd:a} and \eqref{dfn:positive-neighborhood-upd:b}, so we only estimate the product of their right-hand sides by
  \begin{equation*}
    \begin{aligned}
      & 1/4\frac{\Czero\epszero}{1 + \Czero^{2}\epszerosquare}\sqrt{(A -\del0)(1 - A - \del0)} \\
      & \leq 1/4 * 1/2 * \sqrt{(A -\del0)(1 - (A -\del0) - 2\del0)} \\
      & \leq 1/8 * \sqrt{1/4 - 2\del0(A -\del0)} \leq 1/16 * \sqrt{1 - 8\del0(A -\del0)} <  1/16.
    \end{aligned}
  \end{equation*}
\end{proof}

\noindent \textbf{Remarks}
\begin{enumerate}[label=\arabic*., ref=\arabic*]
\item We will later explain the choice of words we used in this definition when
  we consider the corresponding wave equation and its associated energy inequality in
  RCNs.
\item Since our examples below are for the case of spherically symmetric potentials, then we check the validity of condition~\eqref{eqn:w}
  only for the case of singular points $\Gammaj$ in a small neighborhood of zero;
  in all other cases, $w$ is locally Lipschitz,
  as it is the product of three Lipschitz functions.
\item The definitons~\eqref{eqn:tau-point} of $\bmTa_{p,\tzau}$ and Condition A
  imply that the Eq.~\eqref{dfn:positive-neighborhood-upd:b} can hold only at some possibly small neighborhood $\bmTa_{p,\tzau}$ of a regular or singular $p \in M$
  or at the entire infinity neighborhood.
  Additionally, condition~\eqref{dfn:positive-neighborhood-upd:b} implies that a singular point
  cannot be in the RCN of any other point, however close it may be
  near that singular point. Therefore, in this sense, singular points must be at the centers of their
  own RCNs.
\item\label{rem:tau_der} The first condition~\eqref{dfn:positive-neighborhood-upd:a} contains
  the square root of the volume element~$\sigma$. For regular and singular points of $\qm$, the expression under the logarithm sign of the dual potential tends
  to zero because of the condition in the second inequality.
  Note also that the expression
  $\qm^{-1/2} \frac{\partial \log\left(w(x)\right)}{\partial \tau}$ is
  the derivative in the direction of the unit vector field $\qm^{-1/2}\frac{\partial}{\partial \tau}$,
  and in the case of spherically symmetric functions in $\RRn$, the
  left-hand side of~\eqref{dfn:positive-neighborhood-upd:a} can be written as
  $\qm^{1/2}\tau\left|\frac{\partial \log\left(w(r)\right)}{\partial r}\right|.$
\end{enumerate}
Let us consider few examples of spherically symmetric potentials and
investigate whether they satisfy the conditions in~\eqref{dfn:positive-neighborhood-upd}.
\begin{example}\label{ex:minkowski}
  The regular potential $\qm = 1, M = \RRn, n \geq 1$.
  We see that $\tau = |x| = r$, so $w=C_1 r^{\frac{n-1}{2}}r = C_1 r^{\frac{n+1}{2}}$, and in~\eqref{dfn:positive-neighborhood-upd:b}, we have $\qm\tau = r$.
  For the condition \eqref{dfn:positive-neighborhood-upd:a}
  \begin{equation*}
    \begin{split}
      & \log (w(r)) = \log\left(C_1 r^{\frac{n+1}{2}}\right) = C_2 \log r + C_3,\text{ and}\\
      & \tau \left|\frac{\partial \log (w(r))}{\partial \tau}\right| 
       = \qm^{1/2}\tau \left|\qm^{-1/2}\frac{\partial \log (w(r))}{\partial \tau}\right| \\
      & = C_4\left(r \frac{\partial \log (w(r))}{\partial r}\right) = O(1).
    \end{split}
  \end{equation*}
  Therefore, the expression on the left-hand side of~\eqref{dfn:positive-neighborhood-upd:b} tends to zero, and we can always
  find constants $A, \del0, \Czero$, and $\epszero$ satisfying both conditions \eqref{dfn:positive-neighborhood-upd} in
  the neighborhood of the origin.
\end{example}
\begin{example}\label{ex:hydrogen}
  $\qm = \beta^{2}|x|^{-2\alpha}, M = \RRn, n \geq 1, \alpha, \beta > 0$.
  
  \noindent We have
  $\tau = \int_{0}^{r}\qm^{-1/2}dr = \frac{1}{\beta(\alpha + 1)}r^{\alpha + 1},$
  so
  $\qm\tau  = \frac{\beta}{\alpha + 1}r^{1-\alpha}$.

  \noindent Moreover, $w = \frac{C_1\beta^{1/2}}{\alpha+1}r^{-3\alpha/2+(n-1)/2+\alpha+1}  =
  \frac{C_1\beta^{1/2}}{\alpha+1}r^{(n-\alpha+1)/2},$
  so $w$ satisfies~\eqref{eqn:w} for $\alpha \leq 1$ and any $n$, in particular, and
  $\log(w) = \frac{n-\alpha+1}{2}\log r +  C_2$, so
  \begin{equation*}
    \begin{aligned}
      & \tau \left|\frac{\partial \log(w)}{\partial \tau} \right|  =
      \qm^{1/2} \tau \left|\qm^{-1/2}\frac{\partial \log(w)}{\partial \tau} \right| \\
      & = \beta r^{-\alpha}\frac{1}{\beta(\alpha + 1)}r^{\alpha + 1} \left|\frac{\partial \log(w)}{\partial r} \right| 
      = \frac{r}{\alpha + 1}  \frac{n-\alpha+1}{2r}\\
      & = \frac{n-\alpha+1}{2(\alpha + 1)}.
    \end{aligned}
  \end{equation*}
  Therefore, condition~\eqref{dfn:positive-neighborhood-upd} is clearly satisfied for $\alpha < 1$ and not satisfied for $\alpha > 1$ because of Corollary~\ref{cor:positive-positive-neighborhood-upd}.

  Let us consider the remaining boundary case for $\alpha = 1$. Note that the left-hand side of \eqref{dfn:positive-neighborhood-upd:a} is $n/4$, and the left-hand side of \eqref{dfn:positive-neighborhood-upd:b} is $\beta/2$; thus, to satisfy the necessary condition \eqref{eqn:positive-neighborhood-necessary}, we must have $\beta^{2} < 1/(4n^{2})$.

  The case for $\alpha = 1$ was studied in the D.1 example in~\cite{Braverman_2002}
  and in~\cite{Kalf_Walter_1972}, where the authors
  explored the essential self-adjointness of the operator~\eqref{eqn:schrodinger}
  in $D(H) = C^{\infty}_{0}\left(\RRn\setminus\{0\}\right)$. For $n \geq 5$, then $\Vm\in L^{2}_{\loc}(\RRn)$, and the operator~\eqref{eqn:schrodinger} is essentially self-adjoint if and only if $\beta^{2} \leq \left(\frac{n-2}{2}\right)^{2} - 1$, and it is semibounded from below when $\beta^{2} \leq \left(\frac{n-2}{2}\right)^{2}$.
  We will also prove that RCN conditions together with Conditions~C
  imply semiboundedness of the operator~\eqref{eqn:schrodinger} and its essential self-adjointness,
  but our condition $\beta^{2} < 1/(4n^{2})$ is more restrictive.

  Unlike in our case, the origin does not belong to $M$, so $M$ is not
  complete in both the Euclidean and inner time metrics. It is well known to require $n \geq 4$
  even for the Laplacian to be essentially self-adjoint in this open domain; see Remark~3 in~\cite{Bruse1998}, so, in this sense, we cannot compare our conditions to those stated above.
  
  In the addendum of this paper, we provide another proof of the
  sufficient conditions of the semiboundedness and essential self-adjointness
  for both the Laplacian operator and the Schr{\"o}dinger operator
  given in~\cite{Bruse1998} by using several important definitions introduced in this paper, i.e., the metric~\eqref{eqn:metric}, the vector field $\partial/\partial \tau$, etc.
\end{example}
\begin{example}\label{ex:not_kato_class}
  $\qm = |x|^{-2}(-\log |x|)^{-\hat{\delta}}, M = \RRn, n \geq 1$, and $\hat{\delta} > 0$ -
  see the examples in~\cite{cycon1987schrodinger} after Theorem 1.12 for $n \geq 3$.
  The authors note, in particular, that for $\hat{\delta} \leq 1$, this potential does not belong to the Kato class,
  and for $n\geq 5$ and $\hat{\delta} \leq 1/2$, it does not belong to the Stummel class. Moreover, the inequality~\eqref{eq:relative-bound} holds for any relative bound $\hat{\delta} > 0$.
  Let us denote $r = |x|$ and investigate the singularity at $r=0$, and we estimate $\tau$ for small $r>0$ by
  \begin{equation*}
    \tau = \left.\int_{0}^{r}\qm^{-1/2} dr\right. = \left.\int_{0}^{r}r(-\log r)^{\hat{\delta}/2} dr\right..
  \end{equation*}
  Now, using L'Hopital's rule, we obtain
  \begin{equation*}
    \qm\tau = r^{-2} \left(-\log r\right)^{-\hat{\delta}} \left.\int_{0}^{r}r(-\log r)^{\hat{\delta}/2} dr\right. \to 0 \text{ when } r \to 0,
  \end{equation*}
  and the left-hand side of \eqref{dfn:positive-neighborhood-upd:b} can be made arbitrarily small.

  \noindent For condition \eqref{dfn:positive-neighborhood-upd:a}, note that $\sigma = C_2r^{(n-1)/2}$, and let us estimate
  \begin{equation*}
    \begin{aligned}
      & w = C_2r^{-3/2}\left(-\log r\right)^{-3\hat{\delta}/4}r^{(n-1)/2} \left.\int_{0}^{r}r(-\log r)^{\hat{\delta}/2} dr\right.\\
      & = C_2r^{n/2 - 2}\left(-\log r\right)^{-3\hat{\delta}/4}\left.\int_{0}^{r}r(-\log r)^{\hat{\delta}/2} dr\right., 
    \end{aligned}
  \end{equation*}
  the condition~\eqref{eqn:w} is satisfied, and
  $$
  \frac{d\log(w)}{dr} \sim C_3/r.
  $$
  Therefore, using the last Remark~\ref{rem:tau_der}, we obtain
  $$\qm^{-1/2}\frac{\partial \log\left(w\right)}{\partial \tau}
  = \frac{d\log(w)}{dr} = O(1/r).$$
  From simple integration by parts, we have this simple asymptotic behavior for $\tau$
  \begin{equation*}
    \tau \sim r^{2}\left(-\log r\right)^{\hat{\delta}/2}\left(1 -\hat{\delta}/4(-\log r)^{-1}\right),
  \end{equation*}
  and we obtain
  \begin{equation*}
    \begin{aligned}
      & \qm^{1/2} \tau \qm^{-1/2} \frac{\partial \log(w)}{\partial \tau} \\
      & = O\left(r^{-1}(-\log r)^{-\hat{\delta}/2}r^{2}\left(-\log r\right)^{\hat{\delta}/2}\left(1 -\hat{\delta}/4(-\log r)^{-1}\right) r^{-1}\right)\\
      & = O(1),
    \end{aligned}
  \end{equation*}
  and the left-hand side of \eqref{dfn:positive-neighborhood-upd:a} is bounded.
\end{example}
   
The next example investigates RCNs for $M = \RRn$ at infinity.
\begin{example}
  $\qm = \beta^{2}|x|^{2\alpha}, r = |x| \gg 1, M = \RRn, n \geq 1, \alpha, \beta > 0$.
  Considering the interval length $\Delta r = \epsilon r^{-\alpha}$ with some small and fixed values $\epsilon$ and for $\alpha \leq 1$, we have
  $$ \tau = \int_{r}^{r+\Delta r}\qm^{-1/2}dr = \frac{r^{1 - \alpha}}{\beta(1-\alpha)}\left[\left(1 + \frac{\Delta r}{r}\right)^{1 - \alpha}  - 1\right]
  \sim 1/\beta r^{-\alpha}\Delta r,
  $$
  so
  \begin{equation}\label{eqn:cond-b-thm}
    \qm\tau  \sim \beta^{2} r^{2\alpha} (1/\beta) r^{-\alpha}\Delta r = \beta r^{\alpha} \Delta r = \epsilon \beta.
  \end{equation}
  
  If $\alpha > 1$, then $M$ is not complete w.r.t.~the metric~\eqref{eqn:metric}.

  \noindent For the expression with the dual potential in condition~\eqref{dfn:positive-neighborhood-upd:a}
  and selected $\Delta r$ above, we calculate
  \begin{equation*}
    w = \qm^{3/4}\sigma\tau
    \sim C_1(\alpha, \beta, \epsilon) r^{3\alpha/2} r^{-2\alpha} r^{(n-1)/2} =  C_1(\alpha, \beta, \epsilon) r^{(-\alpha + n - 1)/2} 
  \end{equation*}
  so $\log(w) \sim C_2(\alpha, n)\log r$, and
  \begin{equation}\label{eqn:cond-a-thm}
    \begin{aligned}
      & \tau \left|\frac{\partial \log(w)}{\partial \tau} \right|  =
      \qm^{1/2} \tau \left|\qm^{-1/2}\frac{\partial \log(w)}{\partial \tau} \right| 
      \sim \beta r^{\alpha} (1/\beta) r^{-\alpha}\Delta r \left|\frac{\partial \log(w)}{\partial r} \right| \\
      & \sim C_3(\alpha, n) \epsilon r^{-1-\alpha}.
    \end{aligned}
  \end{equation}
  Combining estimates~\eqref{eqn:cond-b-thm} and~\eqref{eqn:cond-a-thm} and noticing the monotonicity of their upper bounds,
  we see that the product from Corollary~\ref{cor:positive-positive-neighborhood-upd} can be estimated by
  $\qm\tau^{2} \left|\frac{\partial \log(w)}{\partial \tau} \right|
  < C_4(\alpha, \beta, n) \epsilon^{2} r^{-\alpha-1}$, and it can be made arbitrarily small for large $r$ (and smaller than 1/16),
  so we can always find constants $\Czero = \Czero(r), \epszero = \epszero(r)$
  and fixed constants $\del0 > 0$ and $0 < A < 1$ such that
  conditions~\eqref{dfn:positive-neighborhood-upd:a} and~\eqref{dfn:positive-neighborhood-upd:b} are satisfied.
  Note that with this way defined as $\Delta r$, we can extend RCNs to infinity.
\end{example}

\noindent We are ready to formulate the following:
\begin{lemma}\label{lemma:positivity}
  Let the neighborhood $\bmTa_{p, \tzau}$ be in the range control of the potential at $p \in M.$
  Then, for any real-valued $\phi \in W^{1,2}_{0}(\bmTa_{p, \tzau})$, such that $\qm^{1/2}\phi \in L^{2}(\bmTa_{p, \tzau})$, 
  we have
  \begin{equation}\label{eqn:bilinear-form}
    \left.\int_{\bmTa_{p, \tzau}}\qm\phi^{2}d\mu\right. \leq \delta \left.\int_{\bmTa_{p, \tzau}}\left|\nabla\phi\right|^{2}d\mu\right.
  \end{equation}
  for some $\delta < 1$. 
\end{lemma}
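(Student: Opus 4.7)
The plan is to exploit the coordinate decomposition \eqref{eqn:riemann-tau} together with the volume element \eqref{eqn:volume-element}, and to use integration by parts along the $\tau$-direction to transfer derivatives. Set $I := \int_{\mTa_{p,\tzau}}\qm\phi^2 d\mu$ and $J := \int_{\mTa_{p,\tzau}}|\nabla\phi|^2 d\mu$, and write $L := \log w$ with $w = \qm^{3/4}\sigma\tau$. Since $\qm^{3/2}\sigma^2 = w^2/\tau^2$ and $d\mu = \qm^{1/2}\sigma^2 d\tau\, dVol|_{g_{\tzau}}$, one obtains the representation
\begin{equation*}
I = \int_{\partial\mTa_{p,\tzau}}\int_0^{\tzau}\frac{w^2\phi^2}{\tau^2}\,d\tau\,dVol|_{g_{\tzau}}.
\end{equation*}

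Next, integrate by parts in $\tau$ using $\tau^{-2} = -(1/\tau)'$. The boundary term at $\tau = \tzau$ vanishes because $\phi \in W^{1,2}_0(\mTa_{p,\tzau})$; the boundary term at $\tau = 0$ vanishes by the local absolute continuity of $w$ built into the RCN definition together with the behavior $w^2/\tau \to 0$ at the center (which can be verified by direct inspection when $p$ is regular and by the growth bounds on $w$ near singularities when $p$ is singular). This produces the key identity
\begin{equation*}
I \;=\; 2\int \qm\phi^2 (\tau L')\,d\mu \;+\; 2\int \qm\tau\,\phi\phi_\tau\,d\mu.
\end{equation*}

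Now estimate the two terms against the RCN conditions \eqref{dfn:positive-neighborhood-upd}. Denote the right-hand sides of \eqref{dfn:positive-neighborhood-upd:a} and \eqref{dfn:positive-neighborhood-upd:b} by $\beta_1$ and $\beta_2$ respectively. Condition \eqref{dfn:positive-neighborhood-upd:a} gives $|\tau L'| < \beta_1$ pointwise, so the first term is bounded in absolute value by $2\beta_1 I$. For the cross term, apply a weighted Cauchy–Schwarz $2|ab|\leq \lambda a^2 + b^2/\lambda$ with $a = \qm^{1/2}\phi$ and $b = \qm^{1/2}\tau\phi_\tau$:
\begin{equation*}
\Bigl|2\int\qm\tau\,\phi\phi_\tau\,d\mu\Bigr| \leq \lambda I + \frac{1}{\lambda}\int(\qm\tau)^2\,\qm^{-1}\phi_\tau^2\,d\mu \leq \lambda I + \frac{\beta_2^2}{\lambda} J,
\end{equation*}
where we used \eqref{dfn:positive-neighborhood-upd:b} pointwise and the bound $\int\qm^{-1}\phi_\tau^2\,d\mu \leq J$ from the orthogonal decomposition \eqref{eqn:gradfunc}.

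Combining these estimates gives $(1-2\beta_1-\lambda)I \leq (\beta_2^2/\lambda) J$; optimizing over $\lambda$ yields $\delta = 4\beta_2^2/(1-2\beta_1)^2$. The denominator structure $1+C_0^2\epszero^2$ built into \eqref{dfn:positive-neighborhood-upd} is precisely what is needed to guarantee $\delta < 1$ for admissible $A, \del0$: from the explicit form $4\beta_1^2/C_0^2 + 4\beta_2^2/\epszero^2 = (1-2\del0)/(1+C_0^2\epszero^2)$, one verifies that for properly chosen $C_0,\epszero$ (as permitted by the RCN definition) the required estimate $\delta<1$ holds. The main obstacles are twofold: justifying the vanishing of the boundary term at $\tau = 0$ in the singular regime (which relies on the regularity of $w$ assumed in the RCN definition), and, in borderline cases where $\beta_1 \geq 1/2$, supplementing the weighted Cauchy–Schwarz by exploiting the definite sign of $\tau L'$ near the center—where the contribution $1/\tau$ from $\log\tau$ in $w = \qm^{3/4}\sigma\tau$ dominates—so that the identity can be recast as $\int(2\tau L'-1)\qm\phi^2\,d\mu = -2\int\qm\tau\phi\phi_\tau\,d\mu$ with a strictly positive coefficient on the left.
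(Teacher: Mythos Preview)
Your integration-by-parts identity is correct, but the subsequent estimate has a genuine gap: you need $1-2\beta_1>0$ to absorb the first term into the left-hand side, and this fails in the situations the lemma is meant to cover. The RCN definition explicitly allows $C_0$ to be \emph{large}, and the bound $\beta_1=\tfrac{C_0}{2}\sqrt{(1-A-\del0)/(1+C_0^2\epszero^2)}$ is then of order $C_0\epszero^{-1}\gg 1$. Concretely, in the paper's own Example~\ref{ex:minkowski} (constant potential on $\RRn$) one has $\tau L'=(n+1)/2\ge 1$, and in Example~\ref{ex:hydrogen} with $\alpha<1$ one has $\tau L'=\tfrac{n-\alpha+1}{2(\alpha+1)}$, also typically $\ge 1$. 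So the coefficient $1-2\beta_1-\lambda$ is negative and your inequality is vacuous. Your proposed fix—recasting the identity as $\int(2\tau L'-1)\qm\phi^{2}\,d\mu=-2\int\qm\tau\phi\phi_\tau\,d\mu$ and using positivity of $2\tau L'-1$—does not rescue the argument, because the RCN conditions give only an \emph{upper} bound on $|\tau L'|$, never a lower bound, and even when $2\tau L'-1>0$ pointwise the left side is not bounded below by a constant multiple of $I$.

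The paper's argument avoids this by applying the one-dimensional Hardy inequality $\int_0^{\tzau}f^2/\tau^2\,d\tau\le 4\int_0^{\tzau}(f')^2\,d\tau$ to $f=w\phi$ \emph{before} expanding. This produces $\bigl((w\phi)'\bigr)^2$, hence after a weighted Cauchy inequality a term proportional to $(\tau L')^{2}$ rather than $\tau L'$; the absorption then requires only that $4(1/C_0^{2}+\epszero^{2})(\tau L')^{2}<1-A-\del0$, which is exactly condition~\eqref{dfn:positive-neighborhood-upd:a} and is compatible with $\tau L'$ being large provided $C_0$ is chosen correspondingly large. The structural point is that your linearized identity forces you to control $\tau L'$ additively against $1$, whereas the Hardy route controls $(\tau L')^{2}$ multiplicatively against $C_0^{2}$, and the freedom to choose $C_0$ large is precisely what the RCN machinery is built around.
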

For the proof of Lemma~\ref{lemma:positivity}, we need this
\begin{proposition}[Domain Definition in Estimate~\eqref{eqn:bilinear-form}]\label{prop:approx}
  Under the conditions of Lemma~\ref{lemma:positivity} functions $\phi \in W^{1,2}_{0}(\bmTa_{\Gammaj, \tzau})$
  with $\qm^{1/2}\phi \in L^{2}(\bmTa_{\Gammaj, \tzau})$
  can be approximated by $C^{\infty}_{0}(\bmTa_{\Gammaj, \tzau}\setminus\Gammaj)$ in the norm
  of the Sobolev space $W^{1,2}_{0}(\bmTa_{\Gammaj, \tzau})$.
\end{proposition}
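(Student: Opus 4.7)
The proof proceeds by cutoff-and-mollify. The cutoff must be engineered to exploit both the Lipschitz regularity of $\qm^{-1/2}$ supplied by Condition B and the weighted bound $\qm^{1/2}\phi\in L^{2}(\mTa_{\Gammaj,\tzau})$ inherited from the hypotheses of Lemma~\ref{lemma:positivity}. I would take $\eta_{\epsilon}(x):=\chi\!\left(\qm^{-1/2}(x)/\epsilon\right)$, where $\chi\in C^{\infty}(\mathbb{R})$ is nondecreasing with $\chi\equiv 0$ on $(-\infty,1]$ and $\chi\equiv 1$ on $[2,\infty)$. Since $\qm^{-1/2}=0$ on $\Qcal_{-}$ and is continuous on $\mTa_{\Gammaj,\tzau}$ by Condition B, the cutoff $\eta_{\epsilon}$ vanishes in an open neighborhood of $\Gammaj$, so $\phi_{\epsilon}:=\eta_{\epsilon}\phi$ has compact support in the open set $\mTa_{\Gammaj,\tzau}\setminus\Gammaj$. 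The goal is to prove $\phi_{\epsilon}\to\phi$ in $W^{1,2}$ and then smooth each $\phi_{\epsilon}$ by a standard Friedrichs mollifier in a finite coordinate atlas.

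\textbf{Key estimate.} That $\phi_{\epsilon}\to\phi$ in $L^{2}$ and $(1-\eta_{\epsilon})\nabla\phi\to 0$ in $L^{2}$ both follow from dominated convergence. The delicate term is $\phi\,\nabla\eta_{\epsilon}$. By the chain rule and the local Lipschitz bound $|\nabla\qm^{-1/2}|\leq L$ on compacts in $M\setminus\Qcal_{-}$ furnished by~\eqref{eqn:local-holder},
\begin{equation*}
  |\nabla\eta_{\epsilon}|\ \leq\ \|\chi'\|_{\infty}\,\epsilon^{-1}\,|\nabla\qm^{-1/2}|\ \leq\ L\,\|\chi'\|_{\infty}\,\epsilon^{-1},
\end{equation*}
and the support of $\nabla\eta_{\epsilon}$ lies inside the strip $S_{\epsilon}:=\{x\in\mTa_{\Gammaj,\tzau}:1/(4\epsilon^{2})<\qm(x)<1/\epsilon^{2}\}$, on which $1\leq 4\epsilon^{2}\qm$. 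Consequently,
\begin{equation*}
  \int|\phi\,\nabla\eta_{\epsilon}|^{2}\,d\mu\ \leq\ L^{2}\|\chi'\|_{\infty}^{2}\,\epsilon^{-2}\int_{S_{\epsilon}}\phi^{2}\,d\mu\ \leq\ 4L^{2}\|\chi'\|_{\infty}^{2}\int_{S_{\epsilon}}\qm\,\phi^{2}\,d\mu,
\end{equation*}
and the rightmost integral tends to zero as $\epsilon\downarrow 0$ by dominated convergence, because $\qm\phi^{2}\in L^{1}$ by hypothesis and the sets $S_{\epsilon}$ shrink into $\Qcal_{-}\cap\mTa_{\Gammaj,\tzau}=\Gammaj$, a set of measure zero.

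\textbf{Mollification and main obstacle.} Each $\phi_{\epsilon}$ lies in $W^{1,2}$ with compact support inside $\mTa_{\Gammaj,\tzau}\setminus\Gammaj$, where $\qm$ is locally bounded and the Riemannian data are smooth. Covering the support by finitely many coordinate balls with a subordinate smooth partition of unity and applying a Friedrichs mollifier in each chart produces $C^{\infty}_{0}(\mTa_{\Gammaj,\tzau}\setminus\Gammaj)$ approximations in $W^{1,2}$-norm; a diagonal extraction across $\epsilon\downarrow 0$ then yields the desired sequence converging to $\phi$. The genuine obstacle is the cutoff estimate, since $\Gammaj$ may have codimension one and thus positive $W^{1,2}$-capacity, so the classical unweighted density theorem fails outright; the resolution is precisely to use $\qm^{-1/2}$ (rather than $\tau$) as the defining function of $\eta_{\epsilon}$, which pairs the $\epsilon^{-2}$ arising from differentiating the cutoff with the Hardy-type weight $\qm$ furnished by the assumption $\qm^{1/2}\phi\in L^{2}$, exactly absorbing the blow-up.
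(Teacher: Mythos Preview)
Your strategy---cut off near $\Gammaj$ by a level-set function and then mollify---matches the paper's outline, and your insight that the $\epsilon^{-2}$ arising from the cutoff gradient should be absorbed by the weight $\qm$ via the hypothesis $\qm^{1/2}\phi\in L^{2}$ is exactly the right mechanism. There is, however, a genuine gap in the key estimate. Condition~B.2 gives $\qm^{-1/2}\in\Lip^{0,1}_{\loc}(M\setminus\Qcal_{-})$, so the Lipschitz constant $L=C(K)$ in~\eqref{eqn:local-holder} depends on the compact $K\subset M\setminus\Qcal_{-}$ and is nowhere asserted to remain bounded as $K$ approaches $\Qcal_{-}$. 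Your strips $S_{\epsilon}$ accumulate on $\Gammaj\subset\Qcal_{-}$ as $\epsilon\downarrow 0$, so the constant in your displayed bound is really $L_{\epsilon}$ and may diverge; for instance, when $\qm=r^{-2\alpha}$ in $\RRn$ with $0<\alpha<1$ one has $|\nabla\qm^{-1/2}|=\alpha r^{\alpha-1}$, hence $L_{\epsilon}\sim\epsilon^{1-1/\alpha}\to\infty$. The conclusion $4L^{2}\|\chi'\|_{\infty}^{2}\int_{S_{\epsilon}}\qm\phi^{2}\,d\mu\to 0$ therefore does not follow from dominated convergence alone. Replacing the $L^{\infty}$ bound by the $L^{2}$ bound from Condition~B.3 and keeping $|\nabla\qm^{-1/2}|^{2}$ inside the integral leaves you needing $\qm\,|\nabla\qm^{-1/2}|^{2}\phi^{2}=\tfrac14|\nabla\log\qm|^{2}\phi^{2}\in L^{1}$, which is also not among the stated hypotheses.

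The paper takes a different defining function: it truncates in the variable $\tau(\cdot,\Gammaj)$ rather than in $\qm^{-1/2}$. The structural point is that~\eqref{eqn:limited-grad-tau} gives $|\nabla\tau|=\qm^{-1/2}\leq 1$ \emph{uniformly} on $\mTa_{\Gammaj,\tzau}$, so the analogue of your cutoff-gradient bound carries an $\epsilon$-independent constant from the outset and the blow-up you encounter never appears. The paper's own argument is terse---it writes a sharp truncation $\tilde{\phi}=\phi\cdot\mathbf{1}_{\{\tau>\varepsilon\}}$ and simply asserts $\|\tilde{\phi}-\phi\|_{1,2}\to 0$ without supplying the gradient estimate you attempt---but the choice of $\tau$ over $\qm^{-1/2}$ is what makes the uniform control available.
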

\begin{proof}
  We select the case of an RCN whose center $\Gammaj$ contains a singularity of $\Vm$; the other cases of RCNs
  of regular points or infinity are easier to prove here because of the regularity of $\qm$ in these neighborhoods; see condition~\ref{prop:q-minus:b}.

  Owing to condition~\ref{prop:q-minus:b}, the operator of multiplication
  $$
  \mathcal{A}: W^{1,2}_{0}(\bmTa_{\Gammaj, \tzau}) \rightarrow W^{1,1}_{0}(\bmTa_{\Gammaj, \tzau})
  $$
  defined by
  $$
  \phi \mapsto \qm^{-1/2}\phi
  $$
  is continuous, and, since we have a continuous embedding $W^{1,2}_{0}(\bmTa_{\Gammaj, \tzau}) \hookrightarrow W^{1,1}_{0}(\bmTa_{\Gammaj, \tzau})$,
  it is sufficient to prove that $\text{Ran}(\mathcal{A}) \cap W^{1,2}_{0}(\bmTa_{\Gammaj, \tzau})$ is dense in $W^{1,2}_{0}(\bmTa_{\Gammaj, \tzau})$.
  It is clear that the space $W^{1,2}_{0}(\bmTa_{\Gammaj, \tzau}\setminus \Gammaj)$ is dense in both $W^{1,2}_{0}(\bmTa_{\Gammaj, \tzau})$ and
  $W^{1,2}_{0}(\bmTa_{\Gammaj, \tzau}\setminus \Gammaj) \subset \text{Ran}(\mathcal{A})$, and it is, by definition, the completion
  of $C^{\infty}_{0}(\bmTa_{\Gammaj, \tzau}\setminus\Gammaj)$ in the norm of $W^{1,2}_{0}(\bmTa_{\Gammaj, \tzau})$. 
\end{proof}
\begin{proof}[Proof of Lemma~\ref{lemma:positivity}]
  In the lemma statement, we use a point $p\in M$ and its neighboring $\bmTa_{p, \tzau}$, but the lemma is also
  valid both for RCNs $\bmTa_{\Gamma_{j}, \tzau}$ and $\overGi_{\tzau}$.
  In the proof below, we stress the difference for these cases when necessary.
  
  From Proposition~\ref{prop:approx} above, it is sufficient to prove this lemma for $\phi \in C^{\infty}_{0}(\bmTa_{p, \tzau})$,
  or $\phi \in C^{\infty}_{0}(\bmTa_{\Gammaj, \tzau}\setminus \Gammaj)$ for the RCN of $\Gammaj$. 

  For the case of $\overGi_{\tzau}$
  the boundary $\parGi_{[0, \tzau]}$ consists of two regular disjoint components,
  and the trace operator can be applied to each one of them separately.
  
  Let us take any minimal curve from $p$ to $\partial \bmTa_{p, \tzau}$
  (from $p\in \Gammaj$ or from $p\in \parGi$)
  w.r.t.~metric \eqref{eqn:metric}, and we then evaluate
  \begingroup
  \allowdisplaybreaks
  \begin{align*}
    & \left.\int_{0}^{\tzau}\qm \phi^{2} \qm^{1/2} \sigma^{2} d\tau\right. 
    = \left.\int_{0}^{\tzau}\frac{w^{2} \phi^{2}} {\tau^{2}} d\tau\right.
    \leq 4 \left.\int_{0}^{\tzau} \left(\frac{\partial (w\phi)}{\partial \tau}\right)^{2} d\tau\right. \\
    & = 4 \left.\int_{0}^{\tzau} \left(\Czero w \frac{\partial \phi}{\partial \tau}\right.  +
      1/\Czero\frac{\partial w}{\partial \tau}\phi\right)^{2} d\tau\\
    & \leq  \left.4\left(\Czerosquare + 1/\epszerosquare\right) \int_{0}^{\tzau} w^{2}\left(\frac{\partial \phi}{\partial \tau}\right)^{2} d\tau\right. 
     + 4\left(1/\Czerosquare + \epszerosquare\right) \left.\int_{0}^{\tzau} \left(\frac{\partial w}{\partial \tau}\right)^{2}\phi^{2} d\tau\right.\\
    & = 4\left(\Czerosquare + 1/\epszerosquare\right) \left.\int_{0}^{\tzau} \left(\qm^{-1}\left(\frac{\partial \phi}{\partial \tau}\right)^{2}(\qm\tau)^{2}\right)
      \qm^{1/2}\sigma^{2}d\tau\right.\\
    & + 4\left(1/\Czerosquare + \epszerosquare\right)\left.\int_{0}^{\tzau}\left(\left(\frac{\partial \log(w)}{\partial \tau}\right)^{2} \qm\tau^{2}\phi^{2}\right) \qm^{1/2}\sigma^{2}d\tau\right.\\
    & = 4\left(\Czerosquare + 1/\epszerosquare\right) \left.\int_{0}^{\tzau} \left(\qm^{-1}\left(\frac{\partial \phi}{\partial \tau}\right)^{2}(\qm\tau)^{2}\right)  \qm^{1/2}\sigma^{2}d\tau\right.\\
    & + 4\left(1/\Czerosquare + \epszerosquare\right)\left.\int_{0}^{\tzau}\qm\left(\left(\tau\frac{\partial \log(w)}{\partial \tau}\right)^{2} \phi^{2}\right) \qm^{1/2}\sigma^{2}d\tau\right..
  \end{align*}%
  \endgroup
The first inequality is the one-dimensional second-degree Hardy inequality; see~\cite{Davies1999},
\begin{equation*}
  \left.\int_{0}^{\tzau}\frac{f^{2}(\tau)}{\tau^{2}}d\tau\right. \leq \left.4 \int_{0}^{\tzau} \left(f^{\prime}(\tau)\right)^{2}d\tau\right.
\end{equation*}
for all locally absolutely continuous $f$ in $(0,\tzau)$ with $f(0) = 0$. We can apply it here,
since the minimizing curve is from $\Lip_{\loc}^{1,1}$, and
the function $f\vcentcolon=w\phi$ is locally absolutely continuous owing to the condition~\eqref{eqn:w} for $w$
and $\phi$ being smooth, and it tends to zero at $p$ even when $\qm(x)\to \infty$ when $x\to p$;
this is due to the inequality~\eqref{dfn:positive-neighborhood-upd:b} and the boundedness of $w$ and $\phi$ in
$\bmTa_{p, \tzau}$. We use the Cauchy inequality in the second inequality.

Integrating both the first and last parts of the above inequality for all the minimal curves yields
with respect to the volume measure $\left.\text{dVol}\right|_{g_{\tzau}}$ on $\partial \bmTa_{p,\tzau}$,
capping $\qm^{-1}\left(\frac{\partial \phi}{\partial \tau}\right)^{2}$
by $\left|\nabla \phi\right|^{2}$ via expression~\eqref{eqn:gradfunc},
and using the form~\eqref{eqn:volume-element} of the measure $d\mu$, we obtain
\begin{equation}\label{eqn:mainineq}
\begin{split}
  & \left.\int_{\bmTa_{p, \tzau}}\qm\phi^{2}d\mu\right. \leq
  \left.4\left(\Czerosquare + 1/\epszerosquare\right) \int_{\bmTa_{p, \tzau}} (\qm\tau)^{2}\left|\nabla \phi\right|^{2}d\mu\right.\\
  & + 4\left(1/\Czerosquare + \epszerosquare\right)\left.\int_{\bmTa_{p, \tzau}}
  \qm\left(\tau\frac{\partial \log(w)}{\partial \tau}\right)^{2} \phi^{2} d\mu\right..
\end{split}
\end{equation}

Under condition~\eqref{dfn:positive-neighborhood-upd}, we selected the constants $A, \del0, \Czero,$ and $\epszero$ such that
\begin{equation}\label{eqn:greater-A}
  1 - 4\left(1/\Czerosquare + \epszerosquare\right)\left(\tau\frac{\partial \log(w)}{\partial \tau}\right)^{2} > A + \del0
\end{equation}
and
\begin{equation}\label{eqn:less-A}
  4\left(\Czerosquare + 1/\epszerosquare\right)(\qm\tau)^{2} < A - \del0
\end{equation}
in $\bmTa_{p, \tzau}$, so in the lemma formulation, we can define $\delta\vcentcolon=(A - \del0)/(A + \del0)$, and
combining the left-hand side of \eqref{eqn:mainineq} with its last term on the right-hand side and taking into account
preceding inequalities leads to the completion of the proof.
\end{proof}

Note that for the infinity neighborhood $\overGi$ conditions~\eqref{dfn:positive-neighborhood-upd}  
can hold throughout $\overGi$; they could be satisfied for $\tau\to\infty$ and $\qm\to 0$ ,
so that the expressions on the left-hand side of~\eqref{dfn:positive-neighborhood-upd:b} could tend to zero,
and the expression in~\eqref{dfn:positive-neighborhood-upd:a} may not grow too rapidly in $\overGi_{\tau}$
when $\tau\to\infty$.

We study the global finite propagation speed of the solutions of the Cauchy
problem for the wave equation with operator~\eqref{eqn:schrodinger}.
As the essential self-adjointness of~\eqref{eqn:schrodinger}
largely depends on the structure of RCNs of its potential, 
we make the following assumption:

\noindent \textbf{Condition D. All admissible covers are RCNs.}
For the admissible open covers defined in Condition C, we assume that
there exists $\delta > 0$, so that all covers in Condition C are RCNs of
their respective centers with the same $\delta$ defined in Lemma~\ref{lemma:positivity}.
For the infinity neighborhoods $\Gi$ we assume that $\overGi_{\tzau}$ are RCNs for any $\tzau > 0$.

This condition D and the definitions~\eqref{dfn:positive-neighborhood-upd} lead to the fact that any layer
$\overGi_{[\toau, \ttwo]}$ is an RCN for any $0 \leq \toau < \ttwo$.

\section{Schr{\"o}dinger Wave Equation in the Range Control Neighborhood.
  Domain of Dependence. Uniqueness of the Solutions.}\label{chap:uniqueness}
Domains of dependence of the wave equation for the Schr{\"o}odinger operator are defined
for regular (and singular) points and neighborhoods of infinity separately. 

\subsection{Domain of Dependence for Regular or Singular Points. Uniqueness of the Solutions.}
For a regular point $p\in M$ or a submanifold $\Gammaj$ of singularities, we define this domain
of dependence cone $G^{\hat{\delta}}$ as
\begin{equation}\label{eqn:domain-dependence-regular}
  t + (1 - \hat{\delta})^{1/2}\tau(x) \leq (1 - \hat{\delta})^{1/2}\tzau,\ 0\leq t\leq \hat{T},\ x\in \bmTa_{p, \tzau}(\bmTa_{\Gammaj, \tzau}),
\end{equation}
where $\bmTa_{p,\tzau}(\bmTa_{\Gammaj, \tzau})$ are the neighborhoods of the positive reach,
$\tau(x) \vcentcolon = \tau(p, x) (\tau(\Gammaj, x))$, and 
$\hat{T}\vcentcolon=(1 - \hat{\delta})^{1/2}\tzau$ with the parameter $\hat{\delta}$ satisfying
$\delta \leq \hat{\delta} < 1$, where $\delta \geq 0$ is defined in Lemma~\ref{lemma:positivity}.
Let us define further $G^{\hat{\delta}}_{\tilde{t}}\vcentcolon=\{t=\tilde{t}\}\cap G^{\hat{\delta}}$.

In this section we investigate a Cauchy\text{--}Dirichlet problem for the wave equation in the domain
$Q_{T} \vcentcolon= [0, T]\times \bmTa_{p,\tzau}$
with the range control neighborhood $\bmTa_{p,\tzau}$
defined in \eqref{dfn:positive-neighborhood-upd} and
\begin{equation}\label{dfn:T-domain-dependence}
  T \vcentcolon=(1 - \delta)^{1/2}\tzau,\ x\in \bmTa_{p,\tzau}(\bmTa_{\Gammaj, \tzau}).
\end{equation}
Therefore, $T = \max(\hat{T})$ over all $\hat{T}$ defined in~\eqref{eqn:domain-dependence-regular}.

We consider this problem with $S_{T} \vcentcolon=[0, T] \times \partial \bmTa_{p,\tzau}$
\begin{subequations}\label{eqn:hyperbolic-equation}
  \begin{align}
    & \frac{\partial^{2} u}{\partial t^{2}} + Hu  = \rho(t, x), \rho \in L^{2}(Q_{T})\label{eqn:hyperbolic-equation:a}\\
    & u(0, x) = f(x), f \in W_{0}^{1, 2}(\bmTa_{p,\tzau}), \Vp^{1/2}f,\qm^{1/2}f \in L^{2}(\bmTa_{p,\tzau}),  \label{eqn:hyperbolic-equation:b}\\
    & \frac{\partial u(0, x)}{\partial t} = g(x), g \in L^{2}(\bmTa_{p,\tzau}),\label{eqn:hyperbolic-equation:c} \\
    & \left.u(t, x)\right|_{S_{T}}, \left.\frac{\partial u(t, x)}{\partial t}\right|_{S_{T}} = 0.  \label{eqn:hyperbolic-equation:d}
  \end{align}
\end{subequations}

\noindent We need to define a class of weak solutions for the Cauchy\text{--}Dirichlet problem.

\noindent \textbf{Definition. Solutions } $\mU$ \textbf{ of} \eqref{eqn:hyperbolic-equation}.
\noindent A function $u \in \mU$  iff   
\begin{subequations}\label{dfn:solutions-class}
  \begin{align}
    & u \in W^{1, 2}(Q_{T}),\label{dfn:solutions-class:a}\\
    & \Vp^{1/2}u, \qm^{1/2}u \in L^{2}(Q_{T}).\label{dfn:solutions-class:b}
  \end{align}
\end{subequations}
Note that this definition is similar to the definition of the Schr{\"o}dinger operator domain in
Section 1 of~\cite{Oro88}, and this class of solutions was defined in~\cite{Wilcox1962} in
Corollary~5.2 and in~\cite{VisLad56} in $\S 3$ for regular potentials.

We are now ready to formulate the following:
\begin{theorem}\label{thm:domain-dependence}
  {\bf The Domain of Dependence for Regular or Singular Points.} Let $u \in \mU$ be the solution of the Cauchy\text{--}Dirichlet problem~\eqref{eqn:hyperbolic-equation}.
  Then, for a.e. $\hat{T} \leq T$ the domain of dependence equality
  \begin{equation}\label{eqn:domain-dependence}
    E(\hat{T}) -  E(0) =  \int_{0}^{\hat{T}}\left(\int_{G^{\hat{\delta}}_{\tilde{t}}}\rho(t, x)\frac{\partial u}{\partial t}d\mu\right) d\tilde{t},
  \end{equation}
  holds with
  \begin{equation}\label{eqn:energy-integral}
    E(\hat{T}) \vcentcolon= \frac{1}{2}\left.\int_{\hat{S}}\left[\left(\left(\frac{\partial u}{\partial t}\right)^{2} + \left|\nabla u\right|^{2} +
          Vu^2 \right)\left\langle \frac{\partial}{\partial t}, n_{t}\right\rangle -
        2\frac{\partial u}{\partial t}\left\langle \nabla u, n_{x}\right\rangle\right] dS\right.,
  \end{equation}
  where the integral~\eqref{eqn:energy-integral} is taken over the lateral surface of the cone
  $\hat{S} = \{(t, x) | t + (1 - \hat{\delta})^{1/2}\tau(x) = \hat{T}\}$ with $\hat{T}\vcentcolon=(1 - \hat{\delta})^{1/2}\tzau$
  and with a.e. $\hat{\delta}$ satisfying $\delta \leq \hat{\delta} < 1$.
\end{theorem}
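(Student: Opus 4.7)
The plan is to derive \eqref{eqn:domain-dependence} from a spacetime divergence identity obtained by multiplying \eqref{eqn:hyperbolic-equation:a} by $\partial u/\partial t$. The key pointwise computation
\begin{equation*}
  (u_{tt}-\Delta u+Vu)\,u_t \;=\; \partial_t\!\left[\tfrac{1}{2}\bigl(u_t^{2}+|\nabla u|^{2}+Vu^{2}\bigr)\right] \;-\; \mathrm{div}_{x}\bigl(u_t\nabla u\bigr)
\end{equation*}
shows that the spacetime vector field with temporal component $e=\tfrac{1}{2}(u_t^{2}+|\nabla u|^{2}+Vu^{2})$ and spatial component $-u_t\nabla u$ has spacetime divergence $\rho u_t$. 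Integrating this identity over $G^{\hat\delta}$ and applying the divergence theorem, the boundary decomposes into the base $G^{\hat\delta}_{0}=\mTa_{p,\tzau}$ at $t=0$, the lateral surface $\hat{S}$, and the vertex $(\hat T,p)$, which is a single point and contributes nothing. The base flux produces $-E(0)$, and the lateral flux yields $E(\hat T)$ precisely in the form \eqref{eqn:energy-integral}; Fubini on the bulk term $\int_{G^{\hat\delta}}\rho u_t\,dV$ recovers the right-hand side of \eqref{eqn:domain-dependence}.

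The rigorous execution must be carried out for weak solutions $u\in\mU$ having only the regularity \eqref{dfn:solutions-class}, and in particular must accommodate a possibly singular $V$. I would first approximate $u$ in its natural norm by functions compactly supported in $\mTa_{p,\tzau}\setminus\{p\}$ via a cut-off and mollification argument analogous to the proof of Proposition~\ref{prop:approx}. Condition \eqref{dfn:solutions-class:c} together with $\Vm\leq\qm$ gives both $\Vp^{1/2}u\in L^{2}(Q_T)$ and $\Vm^{1/2}u\in L^{2}(Q_T)$, so the two signed pieces of $Vu^{2}$ can be passed to the limit in both the bulk integral and the lateral boundary integral. The ``a.e.~$\hat\delta$'' qualifier arises naturally here: varying $\hat\delta\in(\delta,1)$ produces a coarea foliation of the spacetime strip by the cone surfaces $\hat S$, and the boundary trace of $u$ is well-defined on almost every leaf.

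The main obstacle is the vertex $(\hat T,p)$, where $\qm$ may be unbounded and the cone collapses to a point. The goal is to show that the flux through a shrinking tubular neighborhood of the vertex vanishes in the limit. This is where the RCN hypothesis and Lemma~\ref{lemma:positivity} enter decisively: applying that lemma to the truncated annular shell $\mTa_{p,\tzau}\setminus\mTa_{p,\varepsilon}$ bounds $\int\qm\phi^{2}d\mu$ by $\delta\int|\nabla\phi|^{2}d\mu$ uniformly as $\varepsilon\to 0$, which provides a Hardy-type control sufficient to absorb the singular contribution near $p$. The strict inequality $\hat\delta>\delta$ in the definition of $G^{\hat\delta}$ supplies exactly the margin needed to close the estimate, since the slope $(1-\hat\delta)^{1/2}$ of the lateral surface lies strictly below the critical ``characteristic speed'' $(1-\delta)^{1/2}$ set by the RCN. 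Combining these controls with the density argument yields \eqref{eqn:domain-dependence} for almost every $\hat\delta\in(\delta,1)$, equivalently for a.e.~$\hat T\leq T$.
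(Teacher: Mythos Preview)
Your proposal is correct and follows essentially the same route as the paper: multiply the equation by $u_t$, use the pointwise divergence identity, approximate $u$ by smooth functions vanishing near the singular center (the paper invokes exactly the mechanism of Proposition~\ref{prop:approx}, choosing $u_m\in C_0^\infty(G^\delta\setminus\{[0,T]\times\Gammaj\})$), apply the divergence theorem to the approximants, and pass to the limit using Lemma~\ref{lemma:positivity} and dominated convergence. The paper is terser about the vertex: rather than estimating a shrinking vertex flux directly, it simply notes that for the smooth approximants $u_m$ the divergence theorem yields the identity cleanly, and then the $W^{1,2}$ and weighted $L^2$ convergences (together with the a.e.-in-$\hat\delta$ existence of $E(\hat T)$ established separately in Lemma~\ref{lemma:energy-integral}) suffice to pass to the limit in the lateral boundary integral.
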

In the theorem formulation, we denote $n_{t}$ and $n_{x}$ as the time and spatial components of
the normal vector to the lateral surface $\hat{S}$, and
$dS$ denotes the volume measure induced by the standard Riemannian metric
$d\ell^{2} = dt^{2} + dl^{2}$ defined on the manifold $\mathbb{R}\times M$.

Notably, in the literature, the energy integrals $E(\hat{T})$
are typically defined on the bases $G^{\hat{\delta}}_{0}$ and $G^{\hat{\delta}}_{t}$ of a truncated cone
for any $0 < t < \hat{T}$, and
here we define them on their lateral surfaces $\hat{S}$ to overcome
challenges arising from possible singularities of $\qm$ at the vertices of these cones.

Note that according to the initial conditions \eqref{eqn:hyperbolic-equation:b} and \eqref{eqn:hyperbolic-equation:c},
we have
$$E(0) = \left.\frac{1}{2} \int_{\bmTa_{p,\tzau}}\left(g^{2} + \left|\nabla f\right|^{2} + Vf^{2} \right) d\mu,\right.$$
so that $E(0)\geq 0$ due to Lemma~\ref{lemma:positivity}, and $E(0) = 0$ iff $f=g=0.$
It could be useful to view the base of the dependency cone $\bmTa_{p,\tzau}$ as the limit of lateral surfaces
$\hat{S}$ with $\hat{T}\to 0$ when $\hat{\delta} \to 1$.

Before we turn to the proof of Theorem~\ref{thm:domain-dependence}, we need to clarify the properties of
the integral in~\eqref{eqn:energy-integral}.
\begin{lemma}\label{lemma:energy-integral}
  For a.e. $\hat{T} \leq T$ the integral~\eqref{eqn:energy-integral} exists and is nonnegative for any solution $u\in\mU$
  of the Cauchy\text{--}Dirichlet problem~\eqref{eqn:hyperbolic-equation}. 
\end{lemma}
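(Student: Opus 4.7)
I split the lemma into absolute integrability of the integrand on $\hat{S}$ (existence), and non-negativity of its value. The key conceptual move for the latter is a measure identity $dS/c = d\mu$ that realizes $\hat{S}$ as a ``spacelike slice'' of the cone, reducing the surface integral to a volume integral on $\mTa_{p,\tzau}$ where Lemma~\ref{lemma:positivity} applies directly.

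\textbf{Existence.} I would foliate a neighborhood of $\hat{S}$ in $Q_T$ by the level sets $\hat{S}^{c'} := \{F = c'\}$ of $F(t,x) := t + (1-\hat{\delta})^{1/2}\tau(x)$. Using $|\nabla\tau| = \qm^{-1/2}$ from~\eqref{eqn:limited-grad-tau}, the product-metric gradient $|\nabla F| = \sqrt{1+(1-\hat{\delta})\qm^{-1}} =: c$ is bounded below by $1$ in the RCN. Fubini in $(F,\omega)$ coordinates applied to each of the non-negative $L^1(Q_T)$ quantities $u^2, u_t^2, |\nabla u|^2, \Vp u^2, \qm u^2$ (all finite by the definition of $\mU$) yields integrable restrictions to $\hat{S}^{c'}$ for a.e. $c'$, hence integrability of the integrand of~\eqref{eqn:energy-integral} on $\hat{S}$ for a.e. $\hat{T}\in[0,T]$.

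\textbf{Algebraic reduction and measure identity.} The outward unit normal in $(\mathbb{R}\times M, dt^2+dl^2)$ is $n = (1,(1-\hat{\delta})^{1/2}\nabla\tau)/c$, so $n_t = 1/c$ and $\langle\nabla u, n_x\rangle = (1-\hat{\delta})^{1/2}\langle\nabla u,\nabla\tau\rangle/c$. Defining
\begin{equation*}
  A := u_t - (1-\hat{\delta})^{1/2}\langle\nabla u,\nabla\tau\rangle, \qquad B := |\nabla u|^2 - (1-\hat{\delta})\langle\nabla u,\nabla\tau\rangle^2,
\end{equation*}
the bracket in~\eqref{eqn:energy-integral} equals $(A^2 + B + Vu^2)/c$; Cauchy--Schwarz combined with $|\nabla\tau|^2 = \qm^{-1}$ and $\qm\geq 1$ on the RCN (from the remark preceding~\eqref{eqn:limited-grad-tau}) yields $B \geq \hat{\delta}|\nabla u|^2 \geq 0$. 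A direct calculation in the adapted coordinates~\eqref{eqn:riemann-tau} with volume element~\eqref{eqn:volume-element}, using $d\tau = -dt/(1-\hat{\delta})^{1/2}$ along $\hat{S}$, then gives the key identity
\begin{equation*}
  dS/c = d\mu \qquad \text{under}\ \Phi:\mTa_{p,\tzau}\to \hat{S},\ \Phi(x) := \bigl(\hat{T}-(1-\hat{\delta})^{1/2}\tau(x),\, x\bigr),
\end{equation*}
converting the surface integral into the volume form $2E(\hat{T}) = \int_{\mTa_{p,\tzau}}\Phi^*(A^2 + B + Vu^2)\,d\mu$.

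\textbf{Non-negativity via Lemma~\ref{lemma:positivity}.} Set $v(x) := u(\Phi(x))$; the Dirichlet condition~\eqref{eqn:hyperbolic-equation:e} together with~\eqref{eqn:hyperbolic-equation:c} give $v|_{\partial\mTa_{p,\tzau}} = f|_{\partial\mTa_{p,\tzau}} = 0$, so $v\in W^{1,2}_0(\mTa_{p,\tzau})$ with $\qm^{1/2}v\in L^2$. The chain rule plus $|\nabla\tau|^2 = \qm^{-1}$ yields
\begin{equation*}
  \Phi^*(A^2+B) - |\nabla v|^2 = u_t^2\bigl(1 - (1-\hat{\delta})\qm^{-1}\bigr) \geq \hat{\delta}u_t^2 \geq 0,
\end{equation*}
hence $|\nabla v|^2 \leq \Phi^*(A^2+B)$ pointwise. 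Applying Lemma~\ref{lemma:positivity} to $v$ gives $\int\qm v^2\,d\mu \leq \delta\int|\nabla v|^2\,d\mu \leq \delta\int\Phi^*(A^2+B)\,d\mu$, and since $Vv^2 \geq -\qm v^2$,
\begin{equation*}
  2E(\hat{T}) \geq \int\bigl[\Phi^*(A^2+B) - \qm v^2\bigr]d\mu \geq (1-\delta)\int\Phi^*(A^2+B)\,d\mu \geq 0.
\end{equation*}
The main obstacle is recognizing the measure identity $dS/c = d\mu$ under the natural parametrization $\Phi$: this packaging of the lateral cone structure as a ``spacelike slice'' is what allows the compactly supported form inequality of Lemma~\ref{lemma:positivity} to be applied verbatim to the pulled-back trace $v$; without it the surface integral cannot be bounded by the volume form bound in a clean way.
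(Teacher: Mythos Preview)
Your proof is correct and follows the paper's strategy: coarea/Fubini for a.e.\ existence, then convert the lateral-surface integral to a base integral on $\mTa_{p,\tzau}$ and invoke Lemma~\ref{lemma:positivity}. Your execution is tighter in two places. First, you use the exact graph identity $dS/c=d\mu$ together with the completed-square decomposition $A^{2}+B+Vu^{2}$, whereas the paper reaches the base through a chain of inequalities (Cauchy--Schwarz on $\langle\nabla u,\nabla\tau\rangle$, then inserting $|\nabla\tau|\le1$, then its relation~\eqref{eqn:dS}). Second, and more importantly, you explicitly introduce the pullback $v=u\circ\Phi\in W^{1,2}_{0}(\mTa_{p,\tzau})$ and establish the pointwise bound $|\nabla v|^{2}\le\Phi^{*}(A^{2}+B)$ before applying Lemma~\ref{lemma:positivity}; the paper applies the lemma directly to ``$u$ restricted to $\hat S$'' and its spatial gradient $|\nabla u|$, which glosses over the distinction between $\nabla v$ and $(\nabla u)\circ\Phi$. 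Your pullback step is exactly what is needed to close that gap.
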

\begin{proof}
  Given that the solution $u\in\mU$ implies that $u\in W^{1,2}(G^{\delta})$,
  where $G^{\delta} = \{(t, x):\ x\in \bmTa_{p,\tzau},\ t\geq 0,\ t + (1 - \delta)^{1/2}\tau(x) \leq T\}$
  with $T$ defined in~\eqref{dfn:T-domain-dependence}.

  The cone $G^{\delta}$ has a Lipschitz boundary $\partial G^{\delta}$,
  so we can always extend a solution $u \in W^{1,2}(G^{\delta})$
  to a function $\tilde{u}\in W^{1,2}_{0}([0, T+\epsilon]\times M)$
  with $\tilde{u}=u$ on $G^{\delta}$; see, for instance, Theorem~4.7 in~\cite{evansgariepy2015}.

  Denote the integrand in~\eqref{eqn:energy-integral} by $h$ with $u$ replaced by $\tilde{u}$; note that $h$
  is summable and of compact support in the neighborhood of the cone; then,
  for a.e. $\hat{T} > 0$ and fixed $\hat{\delta}$ we have the formula - see Theorem~3.13 in~\cite{evansgariepy2015}
  \begin{equation*}
    \begin{aligned}
      & \left.\frac{d}{d\xi}\left(\int_{\{t+(1-\hat{\delta})^{1/2}\tau >\hat{T} + \xi\}}hdtd\mu\right)\right|_{\xi=0+} =
      -\int_{\{t+(1-\hat{\delta})^{1/2}\tau =\hat{T}\}}\frac{h}{|D(t+(1-\hat{\delta})^{1/2}\tau)|}dS  \\
      & = -\int_{\hat{S}}\frac{h}{\sqrt{1 + (1-\hat{\delta})|\nabla\tau|^{2}}}dS.
    \end{aligned}
  \end{equation*}
  The formula in~\cite{evansgariepy2015} uses the $n-$dimensional Hausdorff measure instead of the induced measure $dS$,
  but the regularity of the lateral surface implies equivalence of these measures.
  We have used the fact that the norm of the differential
  $|D(t+(1-\hat{\delta})^{1/2}\tau)| = \sqrt{1 + (1-\hat{\delta})|\nabla\tau|^{2}}$ is  
  bounded on $\hat{S}$, so the expression on the right hand side of~\eqref{eqn:energy-integral} exists
  for a.e. $\hat{T} \leq T$.

  Let us prove that the integral~\eqref{eqn:energy-integral} is nonnegative.
  Note first that
  \begin{equation*}
    d\mu = \frac{(1-\hat{\delta})^{1/2}|\nabla \tau|}{\sqrt{1 + (1-\hat{\delta})|\nabla\tau|^{2}}} dS,
  \end{equation*}
  so that we could estimate
  \begingroup
  \allowdisplaybreaks
  \begin{align*}
    & E(\hat{T}) =\frac{1}{2}\left.\int_{\hat{S}}\left[\left(\left(\frac{\partial u}{\partial t}\right)^{2} + \left|\nabla u\right|^{2} +
      Vu^2 \right)\left\langle \frac{\partial}{\partial t}, n_{t}\right\rangle -
      2 \frac{\partial u}{\partial t}\left\langle \nabla u, n_{x}\right\rangle\right] dS\right.\\
    & =\left.\int_{\hat{S}}\frac{\left(\frac{\partial u}{\partial t}\right)^{2} + \left|\nabla u\right|^{2} +
      Vu^2 - 2(1-\hat{\delta)}^{1/2}\frac{\partial u}{\partial t}\langle \nabla u, \nabla \tau\rangle}{\sqrt{1 + (1-\hat{\delta})|\nabla\tau|^{2}}} dS\right.\\
    & \geq \left.\int_{\hat{S}}\frac{\left(\frac{\partial u}{\partial t}\right)^{2} + \left|\nabla u\right|^{2} +
        Vu^2 - 2(1-\hat{\delta)}^{1/2}|\nabla\tau|\left|\frac{\partial u}{\partial t}\right||\nabla u|} {\sqrt{1 + (1-\hat{\delta})|\nabla\tau|^{2}}}dS\right.\\
    & \geq \left.\int_{\hat{S}}\frac{ \frac{ (1-\hat{\delta} )^{1/2}}{(1-\hat{\delta})^{1/2}}|\nabla\tau|\left(\left(\frac{\partial u}{\partial t}\right)^{2} + \left|\nabla u\right|^{2} + Vu^2\right) - 2(1-\hat{\delta)}^{1/2}|\nabla\tau|\left|\frac{\partial u}{\partial t}\right||\nabla u|} {\sqrt{1 + (1-\hat{\delta})|\nabla\tau|^{2}}}dS\right.\\
    & = \left.\int_{\bmTa_{p,\tzau}}\left[\frac{1}{(1-\hat{\delta})^{1/2}}\left(\left(\frac{\partial u}{\partial t}\right)^{2} + \left|\nabla u\right|^{2} + Vu^2\right) - 2\left|\frac{\partial u}{\partial t}\right||\nabla u|\right] d\mu\right.\\    
    & = \frac{1}{(1-\hat{\delta})^{1/2}}\left.\int_{\bmTa_{p,\tzau}}\left(\left(\frac{\partial u}{\partial t}\right)^{2} + \left|\nabla u\right|^{2} + Vu^2 - 2(1-\hat{\delta})^{1/2}\left|\frac{\partial u}{\partial t}\right||\nabla u| \right)d\mu\right.\\
    & \geq \frac{1}{(1-\hat{\delta})^{1/2}}\int_{\bmTa_{p,\tzau}}\left(\hat{\delta}\left|\nabla u\right|^{2} + Vu^2\right)d\mu \geq 0.
  \end{align*}%
  \endgroup
  Note here that the solution $u\in\mU$, so $u$ vanishes
  at the boundary $\{t = 0\}\times \partial \bmTa_{p,\tzau}$ because
  the initial condition~\eqref{eqn:hyperbolic-equation:b} with $f\in W^{1, 2}_{0}(\bmTa_{p,\tzau})$ - all $\hat{S}$
  share this common boundary; thus, in the last inequality, we can
  apply Lemma~\ref{lemma:positivity}.
  We also use an explicit form for the components $n_{t}$ and $n_{x}$ of the normal vectors to $\hat{S}$,
  and the fact that $|\nabla \tau|=\qm^{-1/2} \leq 1$ in $\bmTa_{p,\tzau}$; see~\eqref{eqn:limited-grad-tau}.
\end{proof}

\begin{proof}[Proof of Theorem~\ref{thm:domain-dependence}]

  To obtain energy estimates for the domain of dependence $G^{\hat{\delta}}$,
  we multiply both sides of~\eqref{eqn:hyperbolic-equation:a} by $\frac{\partial u}{\partial t}$
  and take the integral over $G^{\hat{\delta}}$.

  Since $u\in\mU$ implies that $u\in W^{1,2}(G^{\hat{\delta}})$, we can always approximate $u$ by a mollified sequence $u_{m}\in C_{0}^{\infty}(G^{\delta})$ such that $u_{m}\to u$ in $W^{1,2}(G^{\hat{\delta}})$.
  In case of a RCN of singularity $\Gammaj$, as we note in Proposition~\ref{prop:approx},
  we select a sequence $u_{m}\in C_{0}^{\infty}(G^{\delta}\setminus \{[0, T] \times \Gammaj\})$.
  
  \noindent We have 
  \begin{equation}\label{eqn:ener-ineq-2}
    \frac{\partial u_{m}}{\partial t} \frac{\partial^{2} u_{m}}{\partial t^{2}} 
    = \frac{1}{2}\frac{\partial}{\partial t} \left(\frac{\partial u_{m}}{\partial t}\right)^{2},
  \end{equation}
  \begin{equation}\label{eqn:divergence-identity}
    \begin{split}
      & -\frac{\partial u_{m}}{\partial t}\Delta u_{m} = -\rm{div} \left(\frac{\partial u_{m}}{\partial t} \nabla u_{m}\right) +
      \left\langle \nabla u_{m}, \nabla \left(\frac{\partial u_{m}}{\partial t}\right)\right\rangle \\
      & = -\rm{div} \left(\frac{\partial u_{m}}{\partial t} \nabla u_{m}\right) +
       \frac{1}{2}\frac{\partial}{\partial t}| \nabla u_{m}|^{2},
    \end{split}
  \end{equation}
  and
  \begin{equation}\label{eqn:potentials-identity}
    \begin{split}
      & \frac{\partial u_{m}}{\partial t}V u_{m} = \frac{1}{2}\frac{\partial}{\partial t} \left(Vu_{m}^{2}\right)
    \end{split}
  \end{equation}
  
  Integrating the last terms of~\eqref{eqn:ener-ineq-2},~\eqref{eqn:divergence-identity}, and~\eqref{eqn:potentials-identity}
  over $G^{\hat{\delta}}$ for a.e. $\hat{\delta} > \delta$ with $\hat{\delta} < 1$, and passing the limit, we obtain
  \begin{equation}\label{eqn:ener-ineq-1}
    \begin{split}
      & 1/2\lim_{m\to\infty}\int_{\hat{S}}\left(\left(\frac{\partial u_{m}}{\partial t}\right)^{2} + \left|\nabla u_{m}\right|^{2} +
        Vu_{m}^2 \right)\left\langle \frac{\partial}{\partial t}, n_{t}\right\rangle dS- \\
      & - \lim_{m\to\infty}\int_{\hat{S}}
          \frac{\partial u_{m}}{\partial t}\left\langle \nabla u_{m}, n_{x}\right\rangle dS\\
      & - 1/2\lim_{m\to\infty}\left.\int_{\bmTa_{p,\tzau}}\left(\left(\frac{\partial u_{m}}{\partial t}\right)^{2} + \left|\nabla u_{m}\right|^{2} +
        Vu_{m}^2 \right)d\mu\right. \\
      & = \lim_{m\to\infty}\int_{0}^{\hat{T}}\left(\int_{G^{\hat{\delta}}_{\tilde{t}}}\rho(t, x)\frac{\partial u_{m}}{\partial t}d\mu\right) d\tilde{t}.\\
    \end{split}
  \end{equation}

  The integrals on the left-hand side of~\eqref{eqn:ener-ineq-1} converge to $E(\hat{T}) - E(0)$ when $u_{m}\to u$ in $W^{1,2}(G^{\hat{\delta}})$,
  $\Vp^{1/2}u_{m}\to \Vp^{1/2}u$, $\Vm^{1/2}u_{m}\to \Vm^{1/2}u$ in $L^{2}(G^{\hat{\delta}})$ due to Lemma~\ref{lemma:positivity}
  and the dominated convergence theorem, and the right-hand side of~\eqref{eqn:ener-ineq-1} converges to the right-hand side
  of~\eqref{eqn:domain-dependence} because both $\frac{\partial u}{\partial t}$ and $\rho(t, x)\in L^{2}(G^{\hat{\delta}})$,
  so that the solution $u\in\mU$ of~\eqref{eqn:hyperbolic-equation}
  satisfies the expression~\eqref{eqn:domain-dependence} for a.e. $\hat{T}\leq T$.
\end{proof}

\noindent A very simple corollary of Theorem~\ref{thm:domain-dependence} is
\begin{corollary}\label{corol:unique-solutions}
  \textbf{Uniqueness of the Solutions.}
  A solution $u\in\mU$ of the Cauchy\text{--}Dirichlet problem \eqref{eqn:hyperbolic-equation} is unique in the domain of
  dependence $G^{\hat{\delta}}$ for a.e. $\hat{T} < T$, as defined in~\eqref{eqn:domain-dependence-regular},
  with $T$ defined in~\eqref{dfn:T-domain-dependence}.
\end{corollary}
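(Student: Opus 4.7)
The plan is to reduce uniqueness to the energy identity of Theorem~\ref{thm:domain-dependence} by the usual linearity argument. Suppose $u_{1}, u_{2}\in\mU$ are two solutions of the Cauchy problem~\eqref{eqn:hyperbolic-equation} with the same source $\rho$, initial data $f, g$, and boundary data. Set $v = u_{1} - u_{2}$. Linearity of $H$ and of the trace and restriction operators shows that $v\in\mU$ and solves~\eqref{eqn:hyperbolic-equation} with $\rho\equiv 0$, $f\equiv 0$, $g\equiv 0$.

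Apply Theorem~\ref{thm:domain-dependence} to $v$. Since $f = g = 0$, we have $E(0) = 0$, and since the source vanishes the right-hand side of~\eqref{eqn:domain-dependence} is zero, so $E(\hat{T}) = 0$ for a.e.~$\hat{T} < T$. The chain of inequalities in the proof of Lemma~\ref{lemma:energy-integral} gives
\begin{equation*}
  0 = E(\hat{T}) \geq \frac{1}{(1-\hat{\delta})^{1/2}}\int_{\mTa_{p,\tzau}}\left(\hat{\delta}\left|\nabla v\right|^{2} + Vv^{2}\right)d\mu,
\end{equation*}
where the integral on the right is really over $G^{\hat{\delta}}_{\hat{T}}$ (the slice of the cone at time $\hat{T}$), and, tracing the same chain back, also
\begin{equation*}
  E(\hat{T}) \geq \int_{G^{\hat{\delta}}_{\hat{T}}}\frac{(1-\hat{\delta})^{1/2}|\nabla\tau|}{\sqrt{1 + (1-\hat{\delta})|\nabla\tau|^{2}}}\left(\left(\frac{\partial v}{\partial t}\right)^{2} + (1 - \delta/\hat{\delta})|\nabla v|^{2} + \text{(non-negative)}\right)d\mu,
\end{equation*}
bounding $\partial v/\partial t$ as well via Lemma~\ref{lemma:positivity} applied to the potential term (with relative bound $\delta < \hat{\delta}$).

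Thus on $G^{\hat{\delta}}_{\hat{T}}$ both $\partial v/\partial t$ and $\nabla v$ must vanish a.e., which, combined with $v|_{t=0}=0$, forces $v\equiv 0$ on $G^{\hat{\delta}}$ for a.e.~$\hat{T} < T$. The main subtlety — and the only nontrivial step — is making sure that the quantity controlled by $E(\hat{T})$ really dominates \emph{both} $\left(\partial v/\partial t\right)^{2}$ and $|\nabla v|^{2}$ (not just their difference), which is why we need $\delta < \hat{\delta}$ strictly: this gap produces a positive coefficient in front of $|\nabla v|^{2}$ after absorbing the cross term $2(1-\hat{\delta})^{1/2}|\partial v/\partial t||\nabla v|$, and a symmetric coefficient in front of $(\partial v/\partial t)^{2}$. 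Taking the union over a dense set of admissible $\hat{\delta} \in (\delta, 1)$ and exhausting $G^{\hat{\delta}}$ by such slices yields $u_{1} = u_{2}$ throughout the dependency cone, completing the proof.
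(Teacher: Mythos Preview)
Your proof is correct and follows essentially the same route as the paper: take the difference of two solutions, apply the energy identity of Theorem~\ref{thm:domain-dependence} with zero data to obtain $E(\hat{T})=0$, and then use the lower bound on $E(\hat{T})$ from Lemma~\ref{lemma:energy-integral} to force the difference to vanish. The only cosmetic difference is that the paper extracts just $|\nabla v|^{2}$ from the lower bound and concludes via the vanishing of $v$ on $\partial\mTa_{p,\tzau}$, whereas you extract both $(\partial v/\partial t)^{2}$ and $|\nabla v|^{2}$ and appeal to the initial condition $v|_{t=0}=0$; also note that the integrals in your displayed inequalities are over the lateral surface $\hat{S}$ (parametrized by $\mTa_{p,\tzau}$ via~\eqref{eqn:dS}), not over the time-slice $G^{\hat{\delta}}_{\hat{T}}$, which at $t=\hat{T}$ is just the cone vertex.
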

\begin{proof}
  If $u_1$ and $u_2$ are the solutions of \eqref{eqn:hyperbolic-equation}, then the difference $\tilde{w}\vcentcolon=u_1 - u_2$
  belongs to $\mU$, and it is the solution of the Cauchy\text{--}Dirichlet problem \eqref{eqn:hyperbolic-equation}
  with the initial conditions \eqref{eqn:hyperbolic-equation:b} and \eqref{eqn:hyperbolic-equation:c} for $f=g=0$ and
  with the source function $\rho = 0$.
  Applying the domain of dependence inequality \eqref{eqn:domain-dependence} to $\tilde{w}$, we obtain $E(\hat{T}) = 0$
  for almost all $\hat{T} \leq T$, and, at the same time,
  $$E(\hat{T}) \geq C(\hat{\delta})\int_{\hat{S}}|\nabla \tilde{w}|^{2}d\mu, C(\hat{\delta}) > 0$$
  due to the above estimate for $E(\hat{T})$ in Lemma~\ref{lemma:energy-integral}, so $\tilde{w}$ is constant a.e.~in~$G^{\hat{\delta}}$,
  and it is zero due to $\tilde{w}$ vanishing at $\partial \bmTa_{p, \tzau}$.
\end{proof}

The corollary below establishes the energy inequality in the domain of dependence $G^{\hat{\delta}}$
\begin{corollary}\label{corol:energy-inequality}
  \textbf{Energy Inequality.}
  For a.e. $\hat{T} < T$ with corresponding $\hat{\delta}$ and $\delta$
  defined in~\eqref{eqn:domain-dependence-regular} such that $\hat{\delta} - \delta \geq \delta_{0} > 0$, we have
  \begin{equation*}
    E(\hat{T}) \leq C_1\left(E(0) + \int_{0}^{\hat{T}}\left(\int_{G_{t}^{\hat{\delta}}}\rho^{2}(t, x) d\mu\right)dt\right)
  \end{equation*}
  with $C_1 = C_1(\delta_{0}, \delta, \hat{T})$.
\end{corollary}
\begin{proof}
  Our proof essentially follows the proof of Theorem~8 in~\cite{Wilcox1962}.
  
  We use notation from Lemma~\ref{lemma:energy-integral};
  there we have an estimate for $E(\hat{T})$, so let us rewrite this estimate in a form more suitable for our proof, namely
  \begingroup
  \allowdisplaybreaks
  \begin{align*}
    & E(\hat{T}) \geq \frac{1}{(1-\hat{\delta})^{1/2}}\left.\int_{\bmTa_{p,\tzau}}\left(\left(\frac{\partial u}{\partial t}\right)^{2} + \left|\nabla u\right|^{2} + Vu^2 - 2(1-\hat{\delta})^{1/2}\left|\frac{\partial u}{\partial t}\right||\nabla u| \right)d\mu\right.\\
    & \geq \frac{1}{(1-\hat{\delta})^{1/2}}\left.\int_{\bmTa_{p,\tzau}}\left(\left(\frac{\partial u}{\partial t}\right)^{2} + \left|\nabla u\right|^{2} + Vu^2 - 2\frac{(1-\hat{\delta})^{1/2}}{(1-\hat{\delta_1})^{1/2}}(1-\hat{\delta_1})^{1/2}\left|\frac{\partial u}{\partial t}\right||\nabla u| \right)d\mu\right.\\
      & \geq\frac{1}{(1-\hat{\delta)}^{1/2}} \left.\int_{\bmTa_{p,\tzau}} \left[\frac{\hat{\delta}-\delta_{1}}{1-\delta_1}\left(\frac{\partial u}{\partial t}\right)^{2} +
       \delta_{1}\left|\nabla u\right|^{2} + Vu^2\right]d\mu\right.\\
     & \geq \frac{\hat{\delta}-\delta_{1}}{(1-\delta_{1})(1-\hat{\delta})^{1/2}}\left.\int_{\bmTa_{p,\tzau}}\left(\frac{\partial u}{\partial t}\right)^{2}d\mu\right.\\
     & = \frac{\hat{\delta}-\delta_{1}}{(1-\delta_{1})(1-\hat{\delta})^{1/2}}\left.\int_{\hat{S}}\frac{(1-\hat{\delta})^{1/2}|\nabla \tau|}{\sqrt{1 + (1-\hat{\delta})|\nabla\tau|^{2}}}\left(\frac{\partial u}{\partial t}\right)^{2}dS\right.\\
     & = \frac{\hat{\delta}-\delta_{1}}{1-\delta_{1}}\left.\int_{\hat{S}}\frac{|\nabla \tau|}{\sqrt{1 + (1-\hat{\delta})|\nabla\tau|^{2}}}\left(\frac{\partial u}{\partial t}\right)^{2}dS\right..
  \end{align*}%
  \endgroup
  for some $\hat{\delta} > \delta_{1} > \delta$, and we fix $\delta_{1}$ so that
  $\hat{\delta} - \delta_{1} \geq \delta_{0}/2$.

 \noindent Using the above inequality, we can estimate 
  \begin{equation*}
    \begin{aligned}
      & \int_{G^{\hat{\delta}}}\rho\frac{\partial u}{\partial t}d\mu dt \leq 1/2\int_{G^{\hat{\delta}}}\rho^{2}d\mu dt 
      + 1/2\int_{G^{\hat{\delta}}}\left(\frac{\partial u}{\partial t}\right)^{2}d\mu dt \\
      & = 1/2\int_{0}^{\hat{T}}\left[\int_{\tilde{S}}\frac{(1-\tilde{\delta})^{1/2}|\nabla \tau|}{\sqrt{1 + (1-\tilde{\delta})|\nabla\tau|^{2}}}\rho^{2}dS\right] d\tilde{T}\\
      & + 1/2 \int_{0}^{\hat{T}}\left[\int_{\tilde{S}}\frac{(1-\tilde{\delta})^{1/2}|\nabla \tau|}{\sqrt{1 + (1-\tilde{\delta})|\nabla\tau|^{2}}}\left(\frac{\partial u}{\partial t}\right)^{2}dS\right]d\tilde{T}\\
      & \leq C\left(\int_{0}^{\hat{T}}R(\tilde{T})d\tilde{T} + \int_{0}^{\hat{T}}E(\tilde{T})d\tilde{T}\right).
    \end{aligned}
  \end{equation*}
  Here $C=C(\delta, \delta_{0})$ is sufficiently large,
  $$R(\tilde{T}) =
  \int_{\tilde{S}}\frac{(1-\tilde{\delta})^{1/2}|\nabla \tau|}{\sqrt{1 + (1-\tilde{\delta})|\nabla\tau|^{2}}}\rho^{2}dS,$$
  and we use the same notation for the lateral cone surface $\tilde{S}\vcentcolon=\{(t, x): t+(1-\tilde{\delta})^{1/2}\tau(x, p) =\tilde{T}\}$
  with $\tilde{\delta}\geq \hat{\delta}$ and, consequently, $\tilde{\delta} - \delta_{1} \geq \delta_{0}/2$.

  Combined with~\eqref{eqn:domain-dependence}, we obtain this inequality
  \begin{equation}\label{eqn:temp-eq}
    E(\hat{T}) - C\int_{0}^{\hat{T}}E(\tilde{T})d\tilde{T} \leq C_{1}
  \end{equation}
  with $C_{1}\vcentcolon=\max(1, C)\left(E(0) + \int_{0}^{\hat{T}}R(\tilde{T})d\tilde{T}\right)$.
  Furthermore, keeping in mind~\eqref{eqn:temp-eq}, we have
  \begin{equation*}
    \begin{aligned}
      & \frac{d}{d\tilde{T}}\left(e^{-C\tilde{T}}\int_{0}^{\tilde{T}}E(T_1)dT_1\right) = e^{-C\tilde{T}}\left(E(\tilde{T}) - C \int_{0}^{\tilde{T}}E(T_1)dT_1\right)\\
      & \leq C_{1}e^{-C\tilde{T}}.
    \end{aligned}
  \end{equation*}
  Integrating both sides from $0$ to $\hat{T}$, we obtain
  \begin{equation*}
    e^{-C\hat{T}}\int_{0}^{\hat{T}}E(T_1)dT_1 \leq C_{1}\frac{1 - e^{-C\hat{T}}}{C},
  \end{equation*}
  or
  \begin{equation*}
    C\int_{0}^{\hat{T}}E(T_1)dT_1 \leq C_{1}(e^{C\hat{T}} - 1).
  \end{equation*}
  Using this inequality in~\eqref{eqn:temp-eq}, we obtain
  \begin{equation*}
    E(\hat{T}) \leq C_{1}e^{C\hat{T}}.
  \end{equation*}
\end{proof}

\subsection{Domain of Dependence in the RCN of Infinity. Uniqueness of the Solutions.}
In this subsection we consider a Cauchy\text{--}Dirichlet problem in the cylinder
$Q_{T}\vcentcolon=[0, T] \times \overGi_{\tzau}$  with the boundary $S_{T}\vcentcolon=[0, T] \times \parGi_{\tzau}$
for any $\tzau > 0, T > 0$ and any neighborhood $\Gi$, i.e.,
\begin{subequations}\label{eqn:mixed-problem-type2}
  \begin{align}
    & \frac{\partial^{2} u}{\partial t^{2}} + Hu  = \rho(t, x), \rho \in L^{2}(Q_{T})\label{eqn:mixed-problem-type2:a}\\
    & u(0, x) = f(x), f \in W^{1, 2}_{0}(\Gi_{\tzau}), V^{1/2}_{+}f  \in L^{2}(\Gi_{\tzau}),  \label{eqn:mixed-problem-type2:b}\\
    & \frac{\partial u(0, x)}{\partial t} = g(x), g \in L^{2}(\Gi_{\tzau})\label{eqn:mixed-problem-type2:c}\\
    & \left.u(t, x)\right|_{S_{T}}, \left.\frac{\partial u(t, x)}{\partial t}\right|_{S_{T}} = 0.\label{eqn:mixed-problem-type2:d}      
  \end{align}
\end{subequations}
Similar to the previous subsection, we look for the solutions in this class

\noindent \textbf{Definition. Solutions } $\mU$ \textbf{ of}~\eqref{eqn:mixed-problem-type2}.

\noindent A function $u \in \mU$ iff it satisfies these conditions
\begin{subequations}\label{dfn:solutions-class-type2}
  \begin{align}
    & u \in W^{1,2}(Q_{T}),\label{dfn:solutions-class-type2:a}\\
    & \Vp^{1/2}u \in L^{2}(Q_{T}).\label{dfn:solutions-class-type2:b}
  \end{align}
\end{subequations}
The condition~\eqref{eqn:mixed-problem-type2:d} implies that we search for the solutions that vanish at the boundary
$\parGi_{\tzau}$ for a.e. $t\in [0, T]$. Note that the conditions~\eqref{dfn:solutions-class-type2} are less strict
than those in~\eqref{dfn:solutions-class}; this is due to the regularity of $\qm$ in $\Gi_{\tzau}$.

As in the previous subsection, we are ready to formulate
\begin{theorem}\label{thm:domain-dependence-type2}
  {\bf The Domain of Dependence at the RCN of Infinity.} Let $\Gi$ satisfy conditions~\ref{cond_c_2} and D, and
  let $u \in \mU$ be the solution of the mixed problem~\eqref{eqn:mixed-problem-type2}.
  Then, for a.e. $\hat{T} \leq T$, the domain of dependence equality
  \begin{equation}\label{eqn:domain-dependence-type2}
    E(\hat{T}) = E(0) + \int_{0}^{\hat{T}}\left(\int_{\Gi_{\tzau}}\rho(t, x)\frac{\partial u}{\partial t}d\mu\right) dt
  \end{equation}
  holds with
  \begin{equation}\label{eqn:energy-integral-type2}
    E(t) \vcentcolon= \frac{1}{2}\left.\int_{\Gi_{\tzau}}\left(\left(\frac{\partial u(t, x)}{\partial t}\right)^{2} + \left|\nabla u(t, x)\right|^{2} +
      V(x)u^{2}(t, x) \right) d\mu\right..
  \end{equation}
\end{theorem}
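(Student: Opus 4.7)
The plan is to mirror the energy-identity derivation used in the proof of Theorem~\ref{thm:domain-dependence}, but now in the straight cylinder $Q_{\hat T}\vcentcolon=[0,\hat T]\times\Gi_{\tzau}$ rather than in a tilted cone. Because the spatial boundary $\parGi_{\tzau}$ is fixed in $t$, the divergence theorem produces only a single lateral-boundary integral on $[0,\hat T]\times\parGi_{\tzau}$, and this integral vanishes identically from the homogeneous Dirichlet data~\eqref{eqn:mixed-problem-type2:e}. The remaining time-slice contributions then give exactly $E(\hat T)-E(0)$, with no cone-slope factor $(1-\hat\delta)^{1/2}$ appearing anywhere.

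First I would take an approximating sequence $u_m\in C^{\infty}(\overline{Q_{\hat T}})$ with each $u_m(t,\cdot)$ compactly supported inside $\Gi_{\tzau}\setminus\parGi_{\tzau}$ for every $t$, obtained by a cutoff near $\parGi_{\tzau}$ in the $\tau$-coordinate followed by mollification, and satisfying $u_m\to u$ in $W^{1,2}(Q_{\hat T})$, $\partial u_m/\partial t\to \partial u/\partial t$ in $L^{2}(Q_{\hat T})$, and $\Vp^{1/2}u_m\to \Vp^{1/2}u$ in $L^{2}(Q_{\hat T})$. Since $\qm$ is regular throughout $\Gi_{\tzau}$ by Condition~B, and $\Gi_{\tzau}$ is compact in the $\tau$-metric by Condition~A, no center-removal of the kind used in Proposition~\ref{prop:approx} is needed. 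For such smooth $u_m$, the pointwise identities~\eqref{eqn:ener-ineq-2},~\eqref{eqn:divergence-identity}, and~\eqref{eqn:potentials-identity} apply verbatim; multiplying~\eqref{eqn:mixed-problem-type2:a} by $\partial u_m/\partial t$ and integrating over $Q_{\hat T}$ via Fubini yields
\begin{equation*}
E_m(\hat T)-E_m(0)-\int_{0}^{\hat T}\!\!\int_{\parGi_{\tzau}}\!\frac{\partial u_m}{\partial t}\langle \nabla u_m, n\rangle\,dS\,dt \;=\;\int_{0}^{\hat T}\!\!\int_{\Gi_{\tzau}}\!\rho\,\frac{\partial u_m}{\partial t}\,d\mu\,dt,
\end{equation*}
and the lateral integral vanishes by the compact-support choice.

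The main technical step is the limit $m\to\infty$. The kinetic, gradient, and $\Vp$ pieces converge from $u_m\to u$ in $W^{1,2}(Q_{\hat T})$ together with~\eqref{dfn:solutions-class-type2:c}; the source integral converges by Cauchy--Schwarz using~\eqref{eqn:mixed-problem-type2:b} and~\eqref{dfn:solutions-class-type2:b}. The delicate piece is $\int \Vm u_m^{2}\,d\mu$, which is not controlled by the membership $u\in\mU$ alone, since~\eqref{dfn:solutions-class-type2} does not include $\qm^{1/2}u\in L^{2}$. Here I would invoke Condition~D, which guarantees that $\Gi_{\tzau}$ is a RCN for every $\tzau>0$, and apply Lemma~\ref{lemma:positivity} time-slice by time-slice to $u_m(t,\cdot)$ to obtain
\begin{equation*}
\int_{\Gi_{\tzau}}\!\Vm\, u_m^{2}(t,\cdot)\,d\mu \;\leq\; \delta\int_{\Gi_{\tzau}}\!|\nabla u_m(t,\cdot)|^{2}\,d\mu,
\end{equation*}
uniformly in $m$ and $t$. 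This uniform bound, combined with a.e.\ convergence along a subsequence, gives convergence of the $\Vm$ term by dominated convergence, and hence $E_m(t)\to E(t)$ for a.e.\ $t$.

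I expect two items to be the main obstacles: (i) constructing the approximating sequence $u_m$ with all the listed convergence modes simultaneously---the $\Vp^{1/2}$--$L^{2}$ convergence near a possibly unbounded $\Vp$ typically requires a preliminary truncation of $\Vp$ before mollification; and (ii) ensuring that, for a.e.\ $t$, the trace condition $u_m(t,\cdot)\in W_{0}^{1,2}(\Gi_{\tzau})$ is retained so that Lemma~\ref{lemma:positivity} is applicable at that slice---this forces the cutoff to be performed in the $\tau$-variable alone, keeping the compact support away from $\parGi_{\tzau}$ uniformly in $t$. Once these are handled, the remainder is the same integration-by-parts bookkeeping as in Theorem~\ref{thm:domain-dependence}, and the a.e.\ in $\hat T$ qualification arises from restricting to Lebesgue points of $\hat T\mapsto E(\hat T)$, exactly as in the proof of Lemma~\ref{lemma:energy-integral}.
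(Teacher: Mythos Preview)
Your proposal is correct and follows essentially the same approach as the paper: the paper's proof simply states that the argument is a simplified version of Theorem~\ref{thm:domain-dependence}, with the Dirichlet boundary data on $[0,T]\times\parGi_{\tzau}$ eliminating the lateral surface integrals so that only the cylinder-base energies $E(0)$ and $E(\hat T)$ remain. You have filled in exactly the details the paper omits, including the approximation sequence, the vanishing of the lateral term, and the use of Lemma~\ref{lemma:positivity} (via Condition~D) to control the $\Vm$ contribution slice-by-slice.
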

Note that the energy integral~\eqref{eqn:energy-integral-type2} is nonnegative because of the conditions~D and~\eqref{dfn:solutions-class-type2}, and because of Lemma~\ref{lemma:positivity}.
\begin{proof}
  The proof is a simplified version of the one given in Theorem~\ref{thm:domain-dependence};
  indeed, the Dirichlet condition~\eqref{dfn:solutions-class-type2:a} leads only to integrals at the
  cylinder bases in the expressions for $E(0)$ and $E(T)$, so we omit it here.
\end{proof}

Similar to Corollary~\ref{corol:unique-solutions}, we have this corollary of Theorem~\ref{thm:domain-dependence-type2}
\begin{corollary}
  \textbf{Uniqueness of Solutions in the RCN of Infinity.}
  A solution $u\in\mU$ of the mixed problem~\eqref{eqn:mixed-problem-type2} is unique in the domain of dependence $Q_{T}$.
\end{corollary}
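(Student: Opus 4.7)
The argument will parallel Corollary~\ref{corol:unique-solutions}, with the truncated cone replaced by the straight cylinder $Q_T$. Given two solutions $u_1, u_2 \in \mU$ of~\eqref{eqn:mixed-problem-type2} sharing the same data $f, g, \rho$, set $\tilde{w} := u_1 - u_2$. By linearity $\tilde{w} \in \mU$ solves~\eqref{eqn:mixed-problem-type2} with $f = g = 0$ and $\rho \equiv 0$, and $\tilde{w}$ inherits the homogeneous Dirichlet trace on the lateral surface $S_T = [0, T]\times \parGi_{\tzau}$ from~\eqref{eqn:mixed-problem-type2:e}.

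Applying Theorem~\ref{thm:domain-dependence-type2} to $\tilde{w}$ collapses the identity~\eqref{eqn:domain-dependence-type2} to $E(\hat{T}) = E(0) = 0$ for a.e.~$\hat{T} \leq T$. Writing $V = \Vp - \Vm$ in~\eqref{eqn:energy-integral-type2} and applying Lemma~\ref{lemma:positivity} slicewise on the time slice $\tilde{w}(\hat{T}, \cdot) \in W^{1,2}_0(\Gi_{\tzau})$ (legitimate by Condition~D, which guarantees that $\Gi_{\tzau}$ is an RCN for every $\tzau > 0$), one bounds the potentially negative term by
$$
\int_{\Gi_{\tzau}} \Vm \tilde{w}^{2}\, d\mu \leq \int_{\Gi_{\tzau}} \qm \tilde{w}^{2}\, d\mu \leq \delta \int_{\Gi_{\tzau}} |\nabla \tilde{w}|^{2}\, d\mu
$$
with the $\delta < 1$ furnished by Lemma~\ref{lemma:positivity}. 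Consequently
$$
0 = E(\hat{T}) \geq \frac{1}{2}\int_{\Gi_{\tzau}}\left[\left(\frac{\partial \tilde{w}}{\partial t}\right)^{2} + (1-\delta)|\nabla \tilde{w}|^{2} + \Vp \tilde{w}^{2}\right] d\mu,
$$
which forces every non-negative integrand to vanish a.e. In particular $\nabla \tilde{w}(\hat{T}, \cdot) = 0$ a.e., so $\tilde{w}(\hat{T}, \cdot)$ is constant on each connected component of $\Gi_{\tzau}$ for a.e.~$\hat{T}$; the zero Dirichlet trace on $\parGi_{\tzau}$ pins that constant to zero, and we conclude $\tilde{w} \equiv 0$ a.e.~in $Q_T$.

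The only subtlety I foresee is the slicewise applicability of Lemma~\ref{lemma:positivity}: its hypothesis $\qm^{1/2}\phi \in L^{2}$ is not literally part of the class~\eqref{dfn:solutions-class-type2}. I would handle this by first verifying the inequality on the dense subclass of $C^{\infty}_{0}(\Gi_{\tzau})$ test functions produced by the mollification argument behind Proposition~\ref{prop:approx} (where the bound is automatic), then extending to $W^{1,2}_{0}(\Gi_{\tzau})$ slices by density and Fatou's lemma, so that $\int \qm \tilde{w}^{2} d\mu$ is a posteriori finite for a.e.~$\hat{T}$. Once this is in place, the rest is a direct transcription of the cone argument to the cylinder geometry, with the simplification noted in the author's sketch that the energy identity receives contributions only from the two flat bases of $Q_T$.
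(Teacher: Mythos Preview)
Your proposal is correct and follows essentially the same approach as the paper, which does not give an explicit proof but simply says ``Similarly to the Corollary~\ref{corol:unique-solutions}''; you have faithfully transcribed that cone argument to the cylinder $Q_T$. The subtlety you flag about the hypothesis $\qm^{1/2}\phi\in L^{2}$ is in fact a non-issue here: the paper remarks just before Theorem~\ref{thm:domain-dependence-type2} that the solution class~\eqref{dfn:solutions-class-type2} is strictly weaker than~\eqref{dfn:solutions-class} precisely because $\qm$ is regular (locally bounded) in $\Gi_{\tzau}$, so $\qm^{1/2}\tilde{w}(\hat{T},\cdot)\in L^{2}(\Gi_{\tzau})$ is automatic from $\tilde{w}(\hat{T},\cdot)\in L^{2}(\Gi_{\tzau})$ and no density argument is needed.
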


Finally, the corollary below establishes the energy inequality in the domain of dependence $Q_{T}$.
\begin{corollary}
  \textbf{Energy Inequality for Solutions in the RCN of Infinity.}
  For a.e. $\hat{T} < T$ we have 
  \begin{equation*}
    E(\hat{T}) \leq C_1\left(E(0) + \int_{0}^{\hat{T}}\left(\int_{\Gi_{\tzau}}\rho^{2}(t, x) d\mu\right)dt\right)
  \end{equation*}
  with $C_1 = C_1(\Gi_{\tzau})$.
\end{corollary}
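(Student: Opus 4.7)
The plan is to mirror the Grönwall argument from Corollary~\ref{corol:energy-inequality}, but exploiting the much simpler cylindrical geometry of $Q_T = [0,T]\times \Gi_{\tzau}$ so that no cone-surface integrals appear. First I would invoke Theorem~\ref{thm:domain-dependence-type2} to rewrite
\begin{equation*}
  E(\hat{T}) = E(0) + \int_{0}^{\hat{T}}\!\int_{\Gi_{\tzau}}\rho\,\frac{\partial u}{\partial t}\,d\mu\,dt,
\end{equation*}
and then estimate the forcing term by Young's inequality,
$\int_{\Gi_{\tzau}}\rho\,\partial_t u\,d\mu \le \tfrac12\int_{\Gi_{\tzau}}\rho^{2}\,d\mu + \tfrac12\int_{\Gi_{\tzau}}(\partial_t u)^{2}\,d\mu.$

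Next I would bound the kinetic term by the energy itself. Since $\Gi_{\tzau}$ is an RCN (Condition D), Lemma~\ref{lemma:positivity} applied to $u(t,\cdot)\in W^{1,2}_{0}(\Gi_{\tzau})$ for a.e.\ $t$ gives $\int_{\Gi_{\tzau}} \Vm u^{2}\,d\mu \le \delta \int_{\Gi_{\tzau}}|\nabla u|^{2}\,d\mu$ with $\delta<1$, hence
\begin{equation*}
  E(t) \ge \tfrac12\int_{\Gi_{\tzau}}\!\left(\left(\tfrac{\partial u}{\partial t}\right)^{2} + (1-\delta)|\nabla u|^{2} + \Vp u^{2}\right)d\mu \ge \tfrac12\int_{\Gi_{\tzau}}(\partial_t u)^{2}\,d\mu.
\end{equation*}
Combining, I obtain $E(\hat{T}) \le E(0) + \tfrac12\int_{0}^{\hat{T}}\!\int_{\Gi_{\tzau}}\rho^{2}\,d\mu\,dt + \int_{0}^{\hat{T}} E(t)\,dt$.

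Finally, a standard Grönwall argument (identical to the one at the end of Corollary~\ref{corol:energy-inequality}: differentiate $e^{-\hat{T}}\int_0^{\hat T} E(t)\,dt$, integrate, and substitute back) yields
\begin{equation*}
  E(\hat{T}) \le e^{\hat{T}}\!\left(E(0) + \tfrac12\int_{0}^{\hat{T}}\!\int_{\Gi_{\tzau}}\rho^{2}(t,x)\,d\mu\,dt\right),
\end{equation*}
which is the required inequality with $C_{1} = C_{1}(\Gi_{\tzau}) = e^{T}$ (depending on $\Gi_{\tzau}$ through the relative bound $\delta$ coming from the RCN structure and through $T$).

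I do not expect a genuine obstacle here: since $\Gi_{\tzau}$ and $\parGi_{\tzau}$ are fixed submanifolds with $u$ vanishing on $S_T$, there is no need for the surface-integral rearrangement that complicated Lemma~\ref{lemma:energy-integral} and Corollary~\ref{corol:energy-inequality}. The only subtle point is that $\Vm^{1/2}u\in L^{2}(Q_T)$ is not part of the definition of $\mU$ in~\eqref{dfn:solutions-class-type2}, but this is automatic from $u\in W^{1,2}(Q_T)$ together with Lemma~\ref{lemma:positivity}, so $E(t)$ is finite and non-negative for a.e.\ $t\in[0,\hat{T}]$, which is all that is required for Grönwall to apply.
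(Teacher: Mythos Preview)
Your proposal is correct and follows essentially the approach the paper intends: since the paper gives no explicit proof of this corollary (it is stated immediately after the analogous Corollary~\ref{corol:energy-inequality} with the understanding that the same Gr\"onwall argument applies in simplified form to the cylindrical domain), your adaptation---invoking Theorem~\ref{thm:domain-dependence-type2}, using Young's inequality, bounding $\tfrac12\int(\partial_t u)^2$ by $E(t)$ via Lemma~\ref{lemma:positivity}, and then running Gr\"onwall---is exactly what is expected. Your observation that the lateral-surface gymnastics of Lemma~\ref{lemma:energy-integral} and Corollary~\ref{corol:energy-inequality} are unnecessary here is the key simplification, and your remark that $\Vm^{1/2}u\in L^2$ follows automatically from $u\in W^{1,2}(Q_T)$ together with the RCN bound is the right way to close the gap in the definition~\eqref{dfn:solutions-class-type2}.
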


\section{Schr{\"o}dinger Wave Equation in the RCN. Existence of Solutions}\label{chap:existence}
In this section, we prove existence of solutions of
the mixed problem~\eqref{eqn:hyperbolic-equation} in neighborhoods of regular or singular points and existence of solutions
of the mixed problem~\eqref{eqn:mixed-problem-type2} in neighborhoods of infinity.
The proofs in this section are pretty standard,
so we provide either their sketch or references to the well-known results for the case of regular points.

Consider a static Dirichlet problem for a fixed $t\in [0, T]$
\begin{equation}\label{eqn:static-dirichlet}
  \begin{aligned}
    & \left.\int_{\bmTa_{p,\tzau}}\left( \langle\nabla u, \nabla\phi\rangle + Vu\phi\right) d\mu\right. = \int_{\bmTa_{p,\tzau}}\rho\phi d\mu\\
    & \text{for all }\phi \in C^\infty_{0}(\bmTa_{p,\tzau}),  u \in W_{0}^{1, 2}(\bmTa_{p,\tzau}) \text{ and } \rho, \qm^{1/2}u, \Vp^{1/2}u \in L^{2}(\bmTa_{p,\tzau}).
  \end{aligned}
\end{equation}
Note that $\Vp\in L^{1}_{\loc}(M)$, and measurable $\Vm$ satisfies inequality~\eqref{eqn:bilinear-form} in Lemma~\ref{lemma:positivity}
with $\qm$ replaced by $\Vm$, so the integrals in~\eqref{eqn:static-dirichlet} are well defined.
For the case of a RCN of singularity $\Gammaj$ test functions $\phi\in C^{\infty}_{0}(\bmTa_{\Gammaj, \tzau} \setminus \Gammaj)$.
  
We have the following:
\begin{theorem}\label{thm:static-dirichlet}
  \textbf{Existence of Static Dirichlet Solutions.}
  A solution $u$ of the problem \eqref{eqn:static-dirichlet} exists and unique in $\bmTa_{p,\tzau}$.
  We can extend it by zero to the entire manifold $M$. 
\end{theorem}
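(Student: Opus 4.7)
The plan is to cast~\eqref{eqn:static-dirichlet} as a coercive symmetric variational problem on a suitable Hilbert space and invoke the Lax--Milgram (or Riesz representation) theorem. The crucial ingredient will be the relative form bound supplied by Lemma~\ref{lemma:positivity}, which controls $\qm$ (hence $\Vm$) by the Dirichlet integral with a constant $\delta<1$ that is independent of the test function.

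First I would choose the space: let $\mathcal{H}$ be the completion of $C^{\infty}_{0}(\mTa_{p,\tzau})$ (respectively $C^{\infty}_{0}(\mTa_{\Gammaj,\tzau}\setminus\Gammaj)$ in the singular case, as justified by Proposition~\ref{prop:approx}) under the norm
$$\|u\|_{\mathcal{H}}^{2} \vcentcolon= \int_{\mTa_{p,\tzau}}\left(|\nabla u|^{2} + \Vp u^{2} + \qm u^{2}\right)d\mu,$$
so that the inclusion $\mathcal{H}\hookrightarrow W^{1,2}_{0}(\mTa_{p,\tzau})$ is continuous. With
$$a(u,v) \vcentcolon= \int_{\mTa_{p,\tzau}}\left(\langle\nabla u,\nabla v\rangle + Vuv\right)d\mu,$$
boundedness $|a(u,v)|\le\|u\|_{\mathcal{H}}\|v\|_{\mathcal{H}}$ follows by Cauchy--Schwarz applied separately to the three pieces, using $|V|\le \Vp+\qm$. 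For coercivity, Lemma~\ref{lemma:positivity} yields $\int\Vm u^{2}d\mu\le\int\qm u^{2}d\mu\le\delta\int|\nabla u|^{2}d\mu$, hence
$$a(u,u)\ge (1-\delta)\int|\nabla u|^{2}d\mu + \int\Vp u^{2}d\mu,$$
while the same bound gives $\|u\|_{\mathcal{H}}^{2}\le (1+\delta)\int|\nabla u|^{2}d\mu + \int\Vp u^{2}d\mu$, so $a(u,u)\ge \tfrac{1-\delta}{1+\delta}\|u\|_{\mathcal{H}}^{2}$.

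It remains to show that $\phi\mapsto\int\rho\phi\,d\mu$ is continuous on $\mathcal{H}$, after which Lax--Milgram will produce a unique $u\in\mathcal{H}$ satisfying~\eqref{eqn:static-dirichlet}; since $u\in W^{1,2}_{0}(\mTa_{p,\tzau})$, its extension by zero to $M$ still lies in $W^{1,2}_{\loc}(M)$ and continues to satisfy the equation against every test function supported in $\mTa_{p,\tzau}$. The main obstacle will be the continuity of the source functional: $\mTa_{p,\tzau}$ need not be bounded in the original Riemannian metric near singularities where $\qm\to\infty$, so a standard Friedrichs inequality is not automatically at our disposal. I would address this by foliating $\mTa_{p,\tzau}$ by the level sets $\{\tau=\widetilde{\tau}\}$ and arguing one-dimensionally along minimizing $\tau$-curves: the vanishing of $\phi$ at the outer boundary, the volume factorization~\eqref{eqn:volume-element}, the identity $|\nabla\tau|^{2}=\qm^{-1}\le 1$, and the one-dimensional Hardy inequality already exploited in the proof of Lemma~\ref{lemma:positivity} should combine to yield $\|\phi\|_{L^{2}(\mTa_{p,\tzau})}\le C(\tzau)\|\phi\|_{\mathcal{H}}$, completing the argument.
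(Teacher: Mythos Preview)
Your approach is the same as the paper's---set up a Hilbert space on which the bilinear form $a(u,v)=\int(\langle\nabla u,\nabla v\rangle+Vuv)\,d\mu$ is bounded and coercive via Lemma~\ref{lemma:positivity}, then invoke Lax--Milgram---and your boundedness and coercivity arguments are correct. The paper makes a cosmetically different choice of norm, taking $\|u\|_{\mathcal{H}}^{2}=a(u,u)$ itself, but the two norms are equivalent by exactly the inequalities you wrote down.

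The one place you overcomplicate matters is the continuity of the source functional. Your worry that $\mTa_{p,\tzau}$ might be unbounded in the original metric $g$ is unfounded: by Condition~A (equation~\eqref{eqn:classical-completeness:b}), any set that is closed and bounded in $\dist_{\tau}$ is compact in $M$, and $\mTa_{p,\tzau}$ is such a set by construction. Hence the ordinary Poincar\'e inequality $\|\phi\|_{L^{2}}\le C\|\nabla\phi\|_{L^{2}}$ holds for $\phi\in W^{1,2}_{0}(\mTa_{p,\tzau})$, and the paper simply quotes it to bound $|\int\rho\phi\,d\mu|\le C\|\rho\|_{L^{2}}\|\phi\|_{\mathcal{H}}$. (Alternatively, since the paper also arranges $\qm>1$ on $\mTa_{p,\tzau}$, Lemma~\ref{lemma:positivity} already gives $\int\phi^{2}\,d\mu\le\int\qm\phi^{2}\,d\mu\le\delta\int|\nabla\phi|^{2}\,d\mu$ directly.) Your proposed Hardy-along-$\tau$-curves argument would likely work but is unnecessary. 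For the extension by zero the paper cites Calder\'on's extension theorem on Lipschitz domains, which is what you are implicitly using.
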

\begin{proof}
  As we have noted before, the domain $\bmTa_{p,\tzau}$ has the Lipschitz boundary, and,
  as it was shown in~\cite{RSMUP_1957__27__284_0}, for instance, the trace operator could be
  defined on $W^{1,2}(\bmTa_{p,\tzau})$, so that the Dirichlet condition
  $\left.u\right|_{\partial \bmTa_{p,\tzau}} = 0$ is well posed.

  Consider in $L^{2}(\bmTa_{p,\tzau})$ a Hilbert space $\mathcal{H}$ with the inner product defined by
  \begin{equation}\label{eqn:hilbert-space-dirichlet}
    \begin{aligned}
      & (u, v)_{\mathcal{H}} = \left.\int_{\bmTa_{p,\tzau}}\left( \langle\nabla u, \nabla v\rangle + \Vp uv -\Vm uv\right)d\mu\right.,\\
      & u, v \in W_{0}^{1, 2}(\bmTa_{p,\tzau}),\text{ with }  \Vp^{1/2}u,  \Vp^{1/2}v, \qm^{1/2}u,  \qm^{1/2}v \in L^{2}(\bmTa_{p,\tzau}), 
    \end{aligned}
  \end{equation}
  and $\mathcal{H}$ is the closure w.r.t.~the norm defined in~\eqref{eqn:hilbert-space-dirichlet}.

  Note that $\mathcal{H}$ is indeed the Hilbert space due to nonnegativity of the norm corresponding
  to~\eqref{eqn:hilbert-space-dirichlet} due to Lemma~\ref{lemma:positivity}; also
  $||u||_{\mathcal{H}} = 0$ iff $u=0$, since
  \begin{equation*}
    ||u||^{2}_{\mathcal{H}} \geq (1-\delta) \left.\int_{\bmTa_{p,\tzau}}|\nabla u|^{2}d\mu\right., 
  \end{equation*}
  so $u=0$ a.e. in $\bmTa_{p,\tzau}$ due to the Dirichlet boundary condition.
  We also claim that the closure w.r.t.~this norm exists - the norm corresponds to the quadratic
  form for the symmetric, densely defined, and non-negative operator~\eqref{eqn:schrodinger} in $L^{2}(\bmTa_{p,\tzau})$. 

  The right-hand side of~\eqref{eqn:static-dirichlet} is the linear bounded functional
  in $\mathcal{H}$; indeed,
  \begin{equation*}
    \begin{aligned}
      & \left.\int_{\bmTa_{p,\tzau}}\rho\phi d\mu\right. = (\rho, \phi)_{L^2(\bmTa_{p,\tzau)}} \leq ||\rho||_{L^2(\bmTa_{p,\tzau})} ||\phi||_{L^2(\bmTa_{p,\tzau})} \\
      & \leq C_1 ||\rho||_{L^2(\bmTa_{p,\tzau})} ||\nabla \phi||_{L^2(\bmTa_{p,\tzau})} \leq  C_1 ||\rho||_{L^2(\bmTa_{p,\tzau})} ||\phi||_{\mathcal{H}}.
    \end{aligned}
  \end{equation*}
  Here the second-to-last inequality is due to the Poincare inequality.
  
  The bilinear form $B$ on the left-hand side of~\eqref{eqn:static-dirichlet} is bounded
  \begin{equation*}
    |B[u,v]| \vcentcolon= |(u, v)_{\mathcal{H}}| \leq ||u||_{\mathcal{H}} ||v||_{\mathcal{H}},
  \end{equation*}
  therefore, by the Lax\text{--}Milgram theorem the solution of~\eqref{eqn:static-dirichlet}
  exists and is unique in $\mathcal{H}$.

  Owing to a well-known result by A. P. Calder\'on~\cite{key0143037m}, that any
  function $u \in W^{1,2}_0(\bmTa_{p,\tzau})$ on the Lipschitz domain could be extended by zero to
  the entire manifold $M$, the proof follows.
\end{proof}
We are ready to formulate the following:
\begin{theorem}\label{thm:existence-solutions}
  \textbf{Existence of Solutions.}
  \begin{itemize}
  \item A solution $u\in\mU$ of the Cauchy\text{--}Dirichlet problem~\eqref{eqn:hyperbolic-equation} defined in~\eqref{dfn:solutions-class}
    with Dirichlet boundary conditions on
    $[0, T] \times \partial \bmTa_{p,\tzau}([0, T] \times \partial \bmTa_{\Gammaj,\tzau})$ exists for any choice of
    the initial conditions~\eqref{eqn:hyperbolic-equation:b},~\eqref{eqn:hyperbolic-equation:c},~\eqref{eqn:hyperbolic-equation:d}, and
    the source function $\rho$.
  \item A solution $u\in\mU$ of the mixed problem~\eqref{eqn:mixed-problem-type2} defined in~\eqref{dfn:solutions-class-type2}
    with Dirichlet conditions on $[0, T] \times \parGi_{\tzau}$ exists for any choice of
    the initial conditions~\eqref{eqn:mixed-problem-type2:b},~\eqref{eqn:mixed-problem-type2:c},~\eqref{eqn:mixed-problem-type2:d}, and
    the source function $\rho$.
  \end{itemize}
\end{theorem}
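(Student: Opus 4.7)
The plan is to recast both mixed problems as abstract second-order Cauchy problems driven by a non-negative self-adjoint operator in $L^{2}$, and then apply standard spectral constructions. First I would invoke Theorem~\ref{thm:static-dirichlet} to pass from the symmetric bilinear form $B[u,v] = \int(\langle\nabla u,\nabla v\rangle + Vuv)d\mu$ on the Hilbert space $\mathcal{H}$ of~\eqref{eqn:hilbert-space-dirichlet} to its Friedrichs representation: Lemma~\ref{lemma:positivity} guarantees that $B$ is closed, non-negative, and that
\begin{equation*}
B[u,u] \geq (1-\delta)\int|\nabla u|^{2}d\mu + \int \Vp u^{2}d\mu,
\end{equation*}
so by Kato's first representation theorem it defines a unique non-negative self-adjoint operator $\tilde H$ on $L^{2}(\mTa_{p,\tzau})$ (respectively $L^{2}(\Gi_{\tzau})$) whose form domain is exactly $\mathcal{H}$.

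Next, using the spectral resolution $\{E_{\lambda}\}_{\lambda\ge 0}$ of $\tilde H$, I would define the candidate solution by the Duhamel formula
\begin{equation*}
u(t) = \cos\!\bigl(t\tilde H^{1/2}\bigr)f \; + \; \tilde H^{-1/2}\sin\!\bigl(t\tilde H^{1/2}\bigr)g \; + \; \int_{0}^{t}\tilde H^{-1/2}\sin\!\bigl((t-s)\tilde H^{1/2}\bigr)\rho(s,\cdot)\,ds.
\end{equation*}
The operators $\tilde H^{-1/2}\sin(\tau\tilde H^{1/2})$ extend by functional calculus to bounded operators on $L^{2}$ with norm $\leq \tau$, and $\cos(t\tilde H^{1/2})$ preserves $\mathcal{H}$; together with the hypotheses $f\in\mathcal{H}$, $g\in L^{2}$, $\rho\in L^{2}(Q_{T})$, this makes the formula well-defined. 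The abstract energy identity $\tfrac{d}{dt}\bigl(\|u_{t}\|^{2} + B[u,u]\bigr) = 2(\rho,u_{t})$ together with Gronwall yields a uniform bound for $\|u_{t}(t)\|_{L^{2}}^{2} + B[u(t),u(t)]$ over $t\in[0,T]$.

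To conclude $u\in\mU$, I would use Lemma~\ref{lemma:positivity} once more to extract from the energy bound the required integrability statements~\eqref{dfn:solutions-class}: the bound on $B[u,u]$ controls $\|\nabla u\|_{L^{2}}$, $\|\Vp^{1/2}u\|_{L^{2}}$, and $\|\qm^{1/2}u\|_{L^{2}}$ simultaneously. The Dirichlet boundary condition is encoded automatically in $u(t,\cdot)\in\mathcal{H}\subset W_{0}^{1,2}$, and the initial conditions are read off from the spectral formula at $t=0$. For the RCN-of-infinity problem~\eqref{eqn:mixed-problem-type2}, the same argument runs in $L^{2}(\Gi_{\tzau})$, simplified by the fact that $\qm$ is regular there, so only $\Vp^{1/2}u\in L^{2}(Q_{T})$ needs to be verified.

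The main obstacle is the rigorous verification that the spectrally-constructed $u$ satisfies the weak form of the wave equation when tested against $\phi\in C_{0}^{\infty}(\mTa_{\Gammaj,\tzau}\setminus\Gammaj)$, not merely against elements of the form domain $\mathcal{H}$, since the operator $\tilde H$ a priori knows only the form-level information. Here Proposition~\ref{prop:approx} is essential: it allows any test function in $\mathcal{H}$ to be approximated in $W^{1,2}$-norm by smooth functions compactly supported away from the singular set $\Gammaj$, and this approximation, combined with the energy inequality of Corollary~\ref{corol:energy-inequality}, permits passing to the limit in the weak formulation. An alternative more constructive route is a Galerkin scheme using an orthonormal basis of eigenfunctions of $\tilde H$ (available whenever $\tilde H$ has compact resolvent on the bounded set $\mTa_{p,\tzau}$ or $\Gi_{\tzau}$), which reduces the problem to an ODE system whose a priori bounds come again from Lemma~\ref{lemma:positivity} and Corollary~\ref{corol:energy-inequality}.
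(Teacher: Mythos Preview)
Your proposal is correct and takes essentially the same approach as the paper: the paper's proof is just a one-line citation to Theorem~9.2 in Wilcox~\cite{Wilcox1962}, noting that it rests on the static Dirichlet problem of Theorem~\ref{thm:static-dirichlet}, and your Friedrichs-extension-plus-Duhamel (or Galerkin) argument is exactly the content of that standard construction spelled out in detail.
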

\begin{proof}
  A complete proof can be found in Theorem 9.2 in~\cite{Wilcox1962}, and it is essentially based on the existence of solutions
  of the static Dirichlet problem established in Theorem~\ref{thm:static-dirichlet}.
\end{proof}

\section{Finite Propagation Speed of Solutions of the Schr{\"o}dindger Wave Equation.
  Essential Self-Adjointness of the Schr{\"o}dinger Operator.}\label{chap:gfps}

We defined classes~\eqref{dfn:solutions-class} and~\eqref{dfn:solutions-class-type2}
of solutions $\mU$ of the mixed problems~\eqref{eqn:hyperbolic-equation} and~\eqref{eqn:mixed-problem-type2},
and we proved their existence and uniqueness in the domains of dependence defined
in Theorems~\ref{thm:domain-dependence} and~\ref{thm:domain-dependence-type2}. 
Note that these domains are defined in possibly small neighborhoods of either regular or singular
points and for the entire neighborhoods of infinity. Our aim is to extend the results of the previous sections to the entire $M$.

Consider the minimal operator $H_{0}$ defined by the expression in~\eqref{eqn:schrodinger} with the domain
$D(H_{0}) = \{u \in L^{2}(M): u \in W_{0}^{1,2}(M), \Vp^{1/2}u, \qm^{1/2}u \in L^{2}(M)\}$.

Now, let us consider the Cauchy problem in $L^{2}(M)$ for its adjoint operator $H_{0}^{*}$
\begin{equation}\label{eqn:hyperbolic-equation-main}
  \begin{aligned}
    & \frac{d^{2} u}{d t^{2}} + H_{0}^{*}u = 0,\\
    & u(t) \in C^{2}([0, T), D(H_{0}^{*})),\\
    & u(0) = f, \frac{du}{dt}(0) = 0, f \in D(H_{0}^{*}),
  \end{aligned}
\end{equation}
and we are going to research the uniqueness of its strong solutions $u$ for some $T > 0$.
A solution here is twice continuously differentiable vector function with values in $D(H_{0}^{*})$
satisfying the equation and initial conditions~\eqref{eqn:hyperbolic-equation-main}.

In the course of investigating the uniqueness of the solutions for problem~\eqref{eqn:hyperbolic-equation-main},
we establish sufficient conditions for the {\it global finite propagation speed (GFPS)} of the solutions of
the Cauchy problem~\eqref{eqn:hyperbolic-equation-main}, i.e., we want to show that the solution value $u(\tnot, \xnot)$
is uniquely defined by the initial conditions on some compact subset $G \Subset M$ depending on both $\tnot$ and $\xnot$.

A solution $u$ in~\eqref{eqn:hyperbolic-equation-main} is equivalent to a solution of the equation
in the weak form
\begin{equation}\label{eqn:hyperbolic-equation-main-weak}
  \left(\frac{d^{2} u}{d t^{2}}, \phi\right) + \left(u, H_{0}\phi\right) = 0\ \text{for any }\phi \in D(H_{0}),
\end{equation}
and we use this equation while establishing the existence and uniqueness of its solutions.

As noted in the Berezansky theorem - see Theorem 6.2 in~\cite{berezansky1978self}-
if a solution is unique for $t \in [0, T)$ for some $T > 0$, and if $H_{0}$ is semibounded from below,
then it is essentially self-adjoint, and we use this method of hyperbolic
equations to prove the essential self-adjointness of $H_{0}$. 

Very notable results related to this method of hyperbolic equations are 
those of P.~R.~Chernoff~\cite{CHERNOFF1973401}
and T.~Kato~\cite{KATO1973415},
and B.~M.~Levitan~\cite{Lev61}.
Chernoff, in particular, considers the Schr{\"o}dinger
operator semibounded from below with smooth potential,
and he proves the finite propagation speed of the solutions
of the wave equation with the initial conditions in $C_{0}^{\infty}(M)$ and the essential self-adjointness
of this operator and its powers. Kato extended the results of Chernoff
to not semibounded from below the Schr{\"o}dinger operators in $\RRn$,
and Levitan provided another proof of the Sears theorem. 

A.~Chumak~\cite{Chumak1973} explicitly constructed the domain of the dependence of solutions
of~\eqref{eqn:hyperbolic-equation-main} for the Beltrami\text{--}Laplace operator, and the GFPS property was
a simple corollary of this construction. The author used the uniqueness of solutions of~\eqref{eqn:hyperbolic-equation-main}
to show the essential self-adjointness of the Beltrami\text{--}Laplace operator.

Yu.~B.~Orochko~\cite{Oro82}, also referred in survey~\cite{Oro88}, studied the GFPS for more
general second-order elliptic operators in $\RRn$ with singular potentials
and not bounded at infinity. Using the method of hyperbolic equations, he obtained sufficient
conditions of the essential self-adjointness of the Schr{\"o}dinger operator comparable with those
defined by P.~Hartman in~\cite{hartman1951}
and by R.~Ismagilov in~\cite{Ism62} for $\mathbb{R}$, and by M.~Gimadislamov~\cite{Gim68}
who extended Ismagilov's results to $\RRn, n \geq 1.$
Note that both Ismagilov and Gimadislamov considerelliptic operators of any even order.

The Ismagilov criterion considers the behavior of a regular potential $\Vm$ in some closed bounded concentric
layers going out to infinity. If the potential does not decrease rapidly in these layers, and if the layers
are sufficiently wide, then the Schr{\"o}dinger operator is essentially self-adjoint. Note that the proofs
in~\cite{Ism62} and ~\cite{Gim68} used quadratic form estimates for the maximal Schr{\"o}dinger operator.

Orochko proved that the conditions of the Ismagilov criterion
guarantee the GFPS for the solutions of the Cauchy problem~\eqref{eqn:hyperbolic-equation-main}; thus, his results
demonstrated the power of the hyperbolic equation method and its physics essence.

Orochko~\cite{Oro88} and Chernoff~\cite{chernoff1977schrodinger} extended their results
to singular potentials having small relative bounds w.r.t.~the Laplacian, which belong to the Kato class, for example.

F.~S.~Rofe-Beketov~\cite{Rof70} further improved the results of Ismagilov in~\cite{Ism62} for regular potentials;
Rofe-Beketov introduced a function whose norm of gradient does not grow too rapidly compared
with $\qm^{-1/2}$ in layer sets of this function. In~\cite{Rof70}, such a function was constructed when the conditions in~\cite{Ism62} were satisfied.

Oliynyk~\cite{Oleinik1994} extended the results of~\cite{Rof70} to Riemannian manifolds without boundary and showed that the conditions on the magnitude of the gradient of the above function in~\cite{Rof70}
imply classical completeness of the potential~\eqref{eqn:classical-completeness}.

In the present work, we want to extend the results in~\cite{Oleinik1994} via the hyperbolic equation method; intuitively,
the classical completeness of the potential, i.e., the impossibility for a classical particle to reach
infinity in finite time, implies a restriction on the propagation speed, and we wanted and hoped to connect
the classical completeness of the potential with the GFPS via the hyperbolic equation method.

Both Orochko~\cite{Oro88} and Chumak~\cite{Chumak1973} explicitly described how solutions
of~\eqref{eqn:hyperbolic-equation-main} with compactly supported $f$ in the initial conditions propagate with time.
Chumak showed that for the case of the Laplace\text{--}Beltrami operator,
the characteristic cone with the vertex at $(\tnot, \xnot)$ is locally spanned by curves
$\{(t, x) \in \mathbb{R}\times M: \tnot - t = s(\xnot, x)\}$, where $s(\xnot, x)$ is the distance along
a geodesic curve connecting $x$ and $\xnot$.

Orochko introduced the notion of \textit{consistent triples} to track the propagation speed,
which largely depends on the growth of the eigenvalues of the main symbol of the corresponding second-order elliptic
operator in the divergent form in $L^{2}(\RRn)$. The potential $\Vm$ is assumed to belong to $K_{n}(\RRn)$,
a uniform Kato class, i.e., it is a small perturbation of the main symbol operator.

Therefore, in the spirit of these works, we also want to estimate a propagation speed on the basis of the results
in Section~\ref{chap:uniqueness}, with a focus on singularities of the potential $\Vm$ and on infinity of $M$. 

In the sequel, we use the IMS Localization Formula stated in~\cite{cycon1987schrodinger}, $\S 3.1$, namely,
suppose that we have a partition of unity of $M$ with functions $J_{\alpha}, \alpha \in \mathcal{A}$
indexed by a set $\mathcal{A}$ satisfying these conditions

\begin{enumerate}[label=(\roman*)]
\item $0 \leq J_{\alpha}(x)\leq 1$ for all $x \in M$;
\item $\sum_{\alpha}J_{\alpha}^{2}(x) = 1$ for all $x\in M$;
\item The family $J_{\alpha}$ is locally finite, i.e. for each compact set $K \subset M$ we have $J_{\alpha} = 0$ for all
  $a \in \mathcal{A}$ with the exception of finitely many indices;
\item $J_{\alpha}\in \Lip^{1,0}_{\loc}(M)$;
\item $\sup_{x\in M}\sum_{\alpha \in \mathcal{A}}|\nabla J_{\alpha}(x)|^{2} < \infty$.
\end{enumerate}
Then the IMS Localization Formula states that for any $\phi \in D(H_{0})$ we have
\begin{equation}\label{eqn:ims-formula}
  (H_{0}\phi, \phi) = \sum_{\alpha\in\mathcal{A}}\left(H_{0}(J_{\alpha}\phi), J_{\alpha}\phi\right)
  - \left(\sum_{\alpha\in\mathcal{A}}|\nabla J_{\alpha}(x)|^{2}\phi, \phi\right).  
\end{equation}
Note that $J_{\alpha}\phi \in D(H_{0})$ and the sums in~\eqref{eqn:ims-formula}
have a finite number of terms due to the local finiteness
of the partition of unity. The last term in~\eqref{eqn:ims-formula}
is called {\it the error of localization,} and it is locally bounded.

The following lemma sets sufficient conditions on the semiboundedness of the Schr{\"o}dinger operator~\eqref{eqn:schrodinger}
in $L^{2}(M)$, and the existence of certain open covers on $M$ will be an important part of these conditions.
\begin{lemma}\label{lemma:semibounded}
  Suppose that $M$ has an admissible open cover satisfying Conditions C and D.
  Then, the minimal operator $H_{0}$ is semibounded from below.
\end{lemma}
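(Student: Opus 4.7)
The plan is to combine the IMS localization formula~\eqref{eqn:ims-formula} with the local form inequality provided by Lemma~\ref{lemma:positivity}, after constructing a suitable quadratic partition of unity subordinate to the admissible cover. The key point is that once each $J_{\alpha}\phi$ is supported inside a single RCN, its contribution $(H_{0}(J_{\alpha}\phi),J_{\alpha}\phi)$ is \emph{nonnegative}, and the only price paid is the bounded IMS error term $\sum_{\alpha}|\nabla J_{\alpha}|^{2}$.

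\textbf{Step 1: Constructing the partition of unity.} Using Condition C, the manifold $M$ is covered by (i) the singular neighborhoods $\mTa_{\Gammaj,\tau_{j}}$, (ii) the infinity neighborhoods $\Gi$, and (iii) the regular neighborhoods $\mTa_{k_{\alpha},\tau_{\alpha}}$ (or the $\Lambda^{k,i}_{\tau_{k,i}}$ sequences). I would first split each $\Gi$ into concentric layers $\Gi_{[k\seps,(k+1)\seps]}$ for $k=0,1,2,\dots$, using the uniform gap $\seps$ from Condition C.4; by Condition D each such layer is itself an RCN. For each element $E$ of the resulting cover pick a smooth cutoff $\chi_{E}\in\Lip^{0,1}$ supported in $E$, equal to $1$ on a slightly shrunk copy of $E$, and with gradient bounded by $C/\seps$ thanks to the $|\nabla\tau|\leq 1$ estimate~\eqref{eqn:limited-grad-tau}. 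The Condition C.4 (Minimal Cover Intersection with parameter $\seps$) guarantees the denominator $S(x):=\sum_{E}\chi_{E}^{2}(x)$ is uniformly bounded below by a positive constant. Then $J_{E}:=\chi_{E}/\sqrt{S}$ yields a $\Lip^{0,1}_{\loc}$ quadratic partition of unity satisfying properties (i)-(v) preceding~\eqref{eqn:ims-formula}; in particular
\begin{equation*}
K:=\sup_{x\in M}\sum_{E}|\nabla J_{E}(x)|^{2}<\infty.
\end{equation*}

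\textbf{Step 2: Applying IMS and Lemma~\ref{lemma:positivity} on each piece.} Given $\phi\in D(H_{0})$, each $J_{E}\phi$ lies in $W^{1,2}_{0}(E)$ with $\qm^{1/2}(J_{E}\phi),\Vp^{1/2}(J_{E}\phi)\in L^{2}(E)$, and $E$ is an RCN by Condition D. Since $\Vm\leq\qm$, Lemma~\ref{lemma:positivity} yields a uniform $\delta<1$ with
\begin{equation*}
\int_{E}\Vm(J_{E}\phi)^{2}\,d\mu\leq\delta\int_{E}|\nabla(J_{E}\phi)|^{2}\,d\mu,
\end{equation*}
and therefore
\begin{equation*}
(H_{0}(J_{E}\phi),J_{E}\phi)\geq(1-\delta)\int_{E}|\nabla(J_{E}\phi)|^{2}\,d\mu+\int_{E}\Vp(J_{E}\phi)^{2}\,d\mu\geq 0.
\end{equation*}

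\textbf{Step 3: Assembling the lower bound.} Summing over $E$ and inserting into~\eqref{eqn:ims-formula},
\begin{equation*}
(H_{0}\phi,\phi)=\sum_{E}(H_{0}(J_{E}\phi),J_{E}\phi)-\Bigl(\sum_{E}|\nabla J_{E}|^{2}\phi,\phi\Bigr)\geq -K\|\phi\|^{2},
\end{equation*}
which is the desired semiboundedness of $H_{0}$.

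\textbf{Main obstacle.} The genuinely delicate step is Step 1: producing a quadratic partition of unity that (a) is subordinate to the RCN cover, (b) has uniformly bounded gradient squares, and (c) behaves well near the submanifolds $\Gammaj$ of singularities of $\Vm$ and towards infinity in each $\Gi$. Condition C.4 is tailor-made for (b) and (c), and the layer decomposition of the $\Gi$ together with the preservation of the RCN property under restriction to layers (guaranteed by Condition D for arbitrary $\tzau>0$) is what makes the argument work globally rather than only in precompact regions. Once the partition is in hand, Steps 2 and 3 are essentially algebraic bookkeeping.
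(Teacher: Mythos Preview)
Your proof is correct and follows the same strategy as the paper: build a Lipschitz quadratic partition of unity subordinate to the admissible RCN cover using the function $\tau$ and Condition~C.4, then apply the IMS formula~\eqref{eqn:ims-formula} together with Lemma~\ref{lemma:positivity} on each piece to conclude $(H_{0}\phi,\phi)\geq -K\|\phi\|^{2}$.

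The one place where you diverge from the paper is the treatment of the infinity neighborhoods $\Gi$. You slice each $\Gi$ into infinitely many concentric layers $\Gi_{[k\teps,(k+1)\teps]}$ and put a separate cutoff on each; the paper instead uses a \emph{single} cutoff $J_{\Gi,\teps}(x)=1-J_{\teps}(\tau(\parGi,x))$ on all of $\Gi$ (equal to $1$ deep inside, vanishing near $\parGi$). Since any $\phi\in D(H_{0})$ is a $W^{1,2}_{0}$-limit of compactly supported functions, $J_{\Gi,\teps}\phi$ is already supported in some bounded $\Gi_{\tzau}$ and vanishes near $\parGi_{\tzau}$, so Lemma~\ref{lemma:positivity} applies directly without slicing. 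Your layer construction also works (the remark after Condition~D confirms each layer is an RCN), but as written the layers $\Gi_{[k\teps,(k+1)\teps]}$ do not overlap, so the $\chi_{E}$ you describe would not sum to something bounded below; you would need overlapping layers such as $\Gi_{[(k-1)\teps,(k+1)\teps]}$. Also, the parameter in Condition~C.4 is $\teps$, not $\seps$.
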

\begin{proof}
  Let us consider an element $\mTa_{p_{\alphai}, \tau_{\alphai}}$ of the admissible cover with $\tau_{\alphai}\geq\tzau$,
  and let us define a piecewise differentiable cutoff function $J_{\tau_{\alphai}}:[0, \infty)\to[0, 1]$ by
  
  $\begin{aligned}
    J_{\tau_{\alphai}}(t) & = 1, \text{if }t\leq \tau_{\alphai} - \teps\\
    & = 0, \text{if } t\geq \tau_{\alphai} - \teps/2\\
    & = \text{linear for }t\in [\tau_{\alphai} - \teps, \tau_{\alphai} - \teps/2] 
  \end{aligned}$

  \noindent with $\teps$ defined in condition~\ref{cond_c_4}.
  Then, we define a Lipschitz cutoff function such that
  
  $\begin{aligned}
    & J_{p_{\alphai}, \tau_{\alphai}}(x) = J_{\tau_{\alphai}}(\tau(p_{\alphai}, x)), x\in M,\ \text{so that}\\
    & \text{supp}(J_{p_{\alphai}, \tau_{\alphai}}) = \bmTa_{p_{\alphai}, \tau_{\alphai} - \teps/2}.
  \end{aligned}$

  \noindent Note that, according to~\ref{cond_c_4}, for any $x\in M$, there is an open cover $\mTa_{p_{\alphai}, \tau_{\alphai}}$ with
  $J_{p_{\alphai}, \tau_{\alphai}}(x) = 1$.

  \noindent Similarly, we define Lipschitz cutoff functions
  for singularity neighborhoods $\mTa_{\Gammaj, \tau_{j}}$ and for
  the neighborhoods $\mTa_{k_{\alphai},\alphai}$ defined in condition~\ref{cond_c_3}.

  \noindent For neighborhoods of infinity $\overGi$ we define the cutoff function by
  \begin{equation}\label{eqn:cut-off-type2}
    \begin{aligned}
      J_{\overGi, \teps}(x) & = 1 - J_{\teps}(\tau(\parGi, x)), x\in \overGi\\
      & = 0\ \text{outside of } \overGi.
    \end{aligned}
  \end{equation}
  
  Note that the gradients of all the cutoff functions above have the same upper bound - due to the boundedness of $|\nabla \tau|$ in~\eqref{eqn:limited-grad-tau}
  in neighborhoods of regular and singular points - and since they are locally finite due to conditions~\ref{cond_c_3},
  then, owing to the condition~\ref{cond_c_4},
  we can renormalize these functions to obtain the subordinate partition of unity
  on $M$, and the renormalized partition functions have a uniformly bounded gradient.

  Thus, we can conclude that the error term 
  $\sup_{x\in M}\sum_{\beta}|\nabla J_{\beta}(x)|^{2} < \infty$ for a.e. $x\in M$,
  where $\beta$ are indices for the set of all partitions of the unity functions defined above.

  The semiboundedness of the operator $H_{0}$ follows from its nonnegativity
  for each $J_{\beta}u$ with $u \in D(H_{0})$ due to Lemma~\ref{lemma:positivity} and
  the boundedness of the error term above in the IMS Localization Formula~\eqref{eqn:ims-formula}.
\end{proof}

The next statement proves the existence of solutions~\eqref{eqn:hyperbolic-equation-main} for
any self-adjoint extension of $H_{0}$; see Theorem 6.2 in~\cite{berezansky1978self} for a more extended formulation.
\begin{proposition}\label{prop-existence-solutions}
  For any self-adjoint extension $H_{0}\subset H_{1} = H_{1}^{*}$ of $H_{0}$ and
  any initial condition $f\in D(H_{0})$ there exists a solution $u$
  of~\eqref{eqn:hyperbolic-equation-main}.
\end{proposition}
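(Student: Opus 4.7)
The plan is to build the solution explicitly by functional calculus on $H_{1}$, following the classical Berezansky construction. First, I invoke Lemma~\ref{lemma:semibounded}, which, under Conditions~C and~D, ensures that $H_{0}$ is semibounded from below by some $-c_{0}$. Since $f\in D(H_{0})\subset D(H_{1})$ and $H_{1}\subset H_{0}^{*}$, any function $u(t)\in D(H_{1})$ solving $u''+H_{1}u=0$ automatically lies in $D(H_{0}^{*})$ and solves the original equation~\eqref{eqn:hyperbolic-equation-main}. I would select $H_{1}$ so that it inherits the lower bound of $H_{0}$ (for instance, the Friedrichs extension); semiboundedness of $H_{1}$ is the essential hypothesis that makes the construction below converge.

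Next, let $\{E_{\lambda}\}$ be the spectral resolution of $H_{1}$ and define the entire function
\begin{equation*}
  c(t,\lambda) \;:=\; \sum_{k=0}^{\infty}\frac{(-\lambda\, t^{2})^{k}}{(2k)!},
\end{equation*}
which coincides with $\cos(t\sqrt{\lambda})$ for $\lambda\geq 0$ and with $\cosh(t\sqrt{-\lambda})$ for $\lambda<0$. Set
\begin{equation*}
  u(t) \;:=\; \int_{\mathbb{R}} c(t,\lambda)\,dE_{\lambda}f
\end{equation*}
via the operational calculus. Since $f\in D(H_{0})\subset D(H_{1})$, the scalar measure $d(E_{\lambda}f,f)$ has finite second moment; combined with the uniform bound $|c(t,\lambda)|\leq\cosh(t\sqrt{c_{0}})$ on the spectrum of $H_{1}$, the dominated convergence theorem applied to $d(E_{\lambda}f,f)$ shows that $u(t)\in D(H_{1})$ and that the formal $t$-derivatives $\int \partial_{t}^{j}c(t,\lambda)\,dE_{\lambda}f$ for $j=1,2$ converge strongly in $L^{2}(M)$ and depend continuously on $t$. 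The identities $c(0,\lambda)=1$, $\partial_{t}c(0,\lambda)=0$, and $\partial_{t}^{2}c(t,\lambda)=-\lambda\,c(t,\lambda)$ then translate, after integrating against $dE_{\lambda}f$, into the initial conditions $u(0)=f$, $u'(0)=0$ and the equation $u''(t)+H_{1}u(t)=0$, completing the construction on any finite interval $[0,T)$.

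The main obstacle is precisely the control of $c(t,\lambda)$ on the negative part of the spectrum of $H_{1}$, where $\cosh(t\sqrt{-\lambda})$ grows exponentially in $\sqrt{-\lambda}$; without a lower semibound on $H_{1}$, the spectral integral diverges outside a small set of analytic vectors. Lemma~\ref{lemma:semibounded} is what removes this obstacle: the negative spectrum of a semibounded $H_{1}$ lies in the compact interval $[-c_{0},0]$, on which $c(t,\lambda)$ is uniformly bounded for each $t$, so the spectral integral converges and defines a $C^{2}$ trajectory in $D(H_{1})$ for every $T>0$. Uniqueness of such $u$ in the larger class $C^{2}([0,T),D(H_{0}^{*}))$ is the subtler issue, addressed separately through the finite-propagation-speed machinery of Section~\ref{chap:uniqueness} together with the covering provided by Condition~D; the present statement only concerns existence, which the above spectral construction delivers directly.
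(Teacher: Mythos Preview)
Your proposal is correct and follows essentially the same route as the paper: construct $u(t)$ via the spectral calculus of a semibounded self-adjoint extension $H_{1}$ as $u(t)=\int \cos(t\sqrt{\lambda})\,dE_{\lambda}f$, and use $f\in D(H_{1})$ together with the lower bound on the spectrum to justify the $C^{2}$ regularity and the equation. Your treatment is in fact more explicit than the paper's about handling the negative spectral interval via $\cosh$ and about why a semibounded extension (e.g.\ Friedrichs) must be chosen, a point the paper simply asserts.
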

We study the uniqueness of the solutions of~\eqref{eqn:hyperbolic-equation-main}, i.e. for $f = 0$, so it is sufficient
for us just to require that $f\in D(H_{0})$.
\begin{proof}
  Since the operator $H_{0}$ is densely defined and semibounded from below due to Lemma~\ref{lemma:semibounded}, it has self-adjoint extensions, so the operator $H_{1}$ exists and bounded from below.

  Denote by $E_{1}$ its partition of unity, and note that the function
  \begin{equation*}
    u(t) = \int_{c}^{\infty}\cos(\sqrt{\lambda}t)dE_{1}f,\ c > -\infty,\ t\in[0, \infty)
  \end{equation*}
  is twice continuously differentiable in $t$ because of the integral
  \begin{equation*}
    \int_{c}^{\infty}\lambda^{2}d(E_{1}f, f) < \infty,
  \end{equation*}
  and we can easily verify that it is the solution of~\eqref{eqn:hyperbolic-equation-main}
  by checking the identity in~\eqref{eqn:hyperbolic-equation-main-weak}
  with any $\phi\in D(H_{0})$.
\end{proof}

We search for solutions of the Cauchy problem~\eqref{eqn:hyperbolic-equation-main} in the domain $D(H_{0}^{*})$, and
we investigate this domain more closely.
\begin{lemma}\label{lemma:operator-domain}
  Suppose that $M$ has an admissible cover satisfying conditions C and D.
  Then the domain $D(H_{0}^{*})\subset\{u \in L^{2}(M)|\  u \in W^{1,2}_{\loc}(M), \Vp^{1/2}u, \qm^{1/2}u \in L_{\loc}^{2}(M)\}.$
\end{lemma}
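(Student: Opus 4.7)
The plan is to reduce the claim to a single element of the admissible cover, and then upgrade $u\in L^{2}(M)$ to the stated local integrability using the form coercivity from Lemma~\ref{lemma:positivity} combined with a Friedrichs mollification. Let $u\in D(H_{0}^{*})$ and set $f:=H_{0}^{*}u\in L^{2}(M)$, so that $(u,H_{0}\phi)=(f,\phi)$ for every $\phi\in D(H_{0})$. Since the admissible cover of Conditions~C and~D is locally finite, every compact $K\subset M$ meets only finitely many cover elements, so it suffices to show, for each RCN $U$ of the cover and each $K'\Subset U$, that $u\in W^{1,2}(K')$, $\Vp^{1/2}u\in L^{2}(K')$, and $\qm^{1/2}u\in L^{2}(K')$.

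I would fix a representative RCN $U=\mTa_{p,\tzau}$ (the cases $U=\mTa_{\Gammaj,\tzau}$ and $U=\Gi_{\tzau}$ are analogous, as in Lemma~\ref{lemma:positivity}), pick a Lipschitz cutoff $J$ with $J\equiv 1$ on $K'$ and $\text{supp}(J)\Subset U$, and let $\{v_{\varepsilon}\}\subset C_{0}^{\infty}(U\setminus\Gammaj)$ be a Friedrichs mollification of $Ju$ in local coordinates with $v_{\varepsilon}\to Ju$ in $L^{2}(M)$; the exclusion of $\Gammaj$ in the singular case is handled as in Proposition~\ref{prop:approx}. Applying Lemma~\ref{lemma:positivity} to $v_{\varepsilon}$ yields the coercivity
\begin{equation*}
  (H_{0}v_{\varepsilon},v_{\varepsilon}) \;\ge\; (1-\delta)\int_{U}|\nabla v_{\varepsilon}|^{2}\,d\mu \;+\; \int_{U}\Vp v_{\varepsilon}^{2}\,d\mu,
\end{equation*}
together with the auxiliary bound $\int_{U}\qm v_{\varepsilon}^{2}\,d\mu\le\delta\int_{U}|\nabla v_{\varepsilon}|^{2}\,d\mu$. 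On the other hand, using $\phi=v_{\varepsilon}$ (or a paired $J^{2}u$-type test function) in the adjoint identity $(u,H_{0}\phi)=(f,\phi)$ and computing the commutator $[\Delta,J]$ on the smooth mollifiers gives the matching upper bound
\begin{equation*}
  (H_{0}v_{\varepsilon},v_{\varepsilon}) \;\le\; C(J,\nabla J,\Delta J)\bigl(\|u\|_{L^{2}(U)}^{2}+\|f\|_{L^{2}(U)}^{2}\bigr)
\end{equation*}
uniformly in $\varepsilon$. Combining the two inequalities produces uniform bounds on $\|\nabla v_{\varepsilon}\|$, $\|\Vp^{1/2}v_{\varepsilon}\|$, and $\|\qm^{1/2}v_{\varepsilon}\|$ in $L^{2}(U)$; weak lower semicontinuity then forces $Ju\in W_{0}^{1,2}(U)$ with $\Vp^{1/2}(Ju),\qm^{1/2}(Ju)\in L^{2}(U)$, and since $J\equiv 1$ on $K'$ the statement of the lemma follows.

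The main obstacle is making the commutator computation rigorous: a priori, $\nabla u$ is only a distribution on $U$ and $\Vm u$ need not be locally integrable. My plan is a two-stage bootstrap. First, standard interior elliptic regularity applied to the distributional equation $-\Delta u = f-Vu$ on open subsets where $\qm$ is continuous (Condition~B.2) yields $u\in W^{1,2}_{\loc}(U\setminus\Qcal_{-})$, legitimizing integration by parts away from the singular set. Second, an $\varepsilon$-tube exhaustion of $\Qcal_{-}$ in the spirit of Proposition~\ref{prop:approx}, together with the fact that $\Qcal_{-}$ has measure zero and positive reach from $\partial U$, will show that the boundary contributions from the tube vanish as the tube shrinks, so that the upper bound above holds uniformly in $\varepsilon$ and the mollification limit passes through.
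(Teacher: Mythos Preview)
Your route is genuinely different from the paper's. You attempt a direct regularity argument: localize, mollify, and squeeze $(H_{0}v_{\varepsilon},v_{\varepsilon})$ between the coercivity of Lemma~\ref{lemma:positivity} and an upper bound extracted from the adjoint identity. The paper instead goes through Theorem~\ref{thm:static-dirichlet}: for each cover element $\mathcal{V}_{\beta}$ and $f_{\beta}=J_{\beta}f$ it \emph{solves} the weak Dirichlet problem $(H_{0}v,g_{\beta})=(v,f_{\beta})$ by Lax--Milgram in the Hilbert space $\mathcal{H}$ whose inner product is the energy form, so $g_{\beta}$ is produced already lying in $W_{0}^{1,2}(\mathcal{V}_{\beta})$ with $\Vp^{1/2}g_{\beta},\qm^{1/2}g_{\beta}\in L^{2}$; summing over $\beta$ gives the element of the stated space. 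No commutators, no mollifiers, no distributional PDE: the regularity is built into the variational space from the start.

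Your argument has a concrete gap at the upper-bound step and the bootstrap that is meant to rescue it. Testing with $\phi=v_{\varepsilon}$ gives $(u,H_{0}v_{\varepsilon})$, not $(v_{\varepsilon},H_{0}v_{\varepsilon})$; the discrepancy $(Ju-v_{\varepsilon},H_{0}v_{\varepsilon})$ involves $\|H_{0}v_{\varepsilon}\|$, which need not stay bounded because $\Vp$ is only in $L^{1}_{\loc}$. The same issue undermines your first bootstrap stage: you write $-\Delta u=f-Vu$ distributionally on $U\setminus\Qcal_{-}$ and appeal to interior elliptic regularity, but $\Vp u$ is not known to be locally integrable (from $\Vp\in L^{1}_{\loc}$ and $u\in L^{2}_{\loc}$ you only get $\Vp u\in L^{2/3}_{\loc}$), so the right-hand side is not a priori a distribution and the regularity theorem does not apply. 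You flag the $\Vm$ obstruction near $\Qcal_{-}$ but miss this $\Vp$ obstruction, which is present everywhere. The paper's Lax--Milgram route is designed precisely to avoid ever writing $Vu$ as a pointwise product.
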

\begin{proof}
  As in Theorem~\ref{thm:static-dirichlet}, let us consider any element $\mathcal{V}_{\beta}$
  of the admissible cover defined above, and for any $f\in L^{2}(M)$ and any $v\in D(H_{0})$
  with $\text{supp}(v) \subset \mathcal{V}_{\beta}$, we consider this Dirichlet problem
  \begin{equation}\label{eq:od-beta}
    (H_{0}v, g_{\beta}) = (v, f_{\beta})
  \end{equation}
  with $f_{\beta} = J_{\beta}f$, and $J_{\beta}$ is the element of the partition of unity
  with $\text{supp}(J_{\beta})\subset \mathcal{V}_{\beta}$.
  In Theorem~\ref{thm:static-dirichlet}, we establish the existence of solutions
  $g_{\beta}\in W_{0}^{1,2}(\mathcal{V}_{\beta}), \Vp^{1/2}g_{\beta}, \qm^{1/2}g_{\beta} \in L^{2}(\mathcal{V}_{\beta})$, and $g_{\beta}$ can
  be extended by zero to the entire $M$.

  Now, it is easy to see that for any fixed $v$ defined above by adding the left and right-hand sides of~\eqref{eq:od-beta}, we have the following equation:
  \begin{equation}\label{eq:od-all-beta}
    (H_{0}v, g) = (v, f)
  \end{equation}
  with $g \vcentcolon= \sum_{\beta}g_{\beta}$.
  We can utilize the partition of the unity property to extend the equality~\eqref{eq:od-all-beta}
  to all $v\in D(H_{0})$.

  It is clear that for any compact $K\subset M$ in the definition of $g$
  we can find a finite cover $\mathcal{V}_{\beta}$ of $K$,
  so that $g$ belongs to the space stated in the lemma formulation.
\end{proof}

Now, we turn to the study of the GFPS, and we investigate how the domain of dependence
for the Cauchy problem~\eqref{eqn:hyperbolic-equation-main} changes with time; the
next three lemmas and a theorem help us better understand
the nature of propagation in the intersection
of neighborhoods of regular, singular, or infinity points or domains.

\begin{lemma}\label{lemma:lemma-domain-dependence-type2}
  \textbf{Domain of Dependence for Points in Neighborhoods of Infinity.}
  Under the conditions of Theorem~\ref{thm:domain-dependence-type2}
  for any $\tnot > 0$ and $\xnot \in \overGi$ its solution $u\in\mU$ depends on the initial conditions
  defined on some compact subdomain in $\Gi$.
\end{lemma}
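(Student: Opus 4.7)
The plan is to adapt the cone-based uniqueness argument of Theorem~\ref{thm:domain-dependence} and Corollary~\ref{corol:unique-solutions}, placing the backward characteristic cone with apex at the arbitrary spacetime point $(\tnot, \xnot)\in (0, T]\times\Gi$ rather than at a designated RCN center. Setting $\tzau := \tau(\parGi, \xnot)$ and choosing $\hat\delta\in(\delta, 1)$, I would introduce the cone
\begin{equation*}
G^{\hat\delta} := \{(t, x)\in [0, \tnot]\times\Gi : \tau(\xnot, x) \leq (1-\hat\delta)^{-1/2}(\tnot - t)\},
\end{equation*}
whose base $B := G^{\hat\delta}\cap\{t=0\}$ is the closed $\tau$-ball of radius $r := (1-\hat\delta)^{-1/2}\tnot$ around $\xnot$. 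By the Hopf-Rinow theorem (Condition~A), $B$ is compact in $M$, and $G^{\hat\delta}$ sits inside the layer $\Gi_{[\max(0,\,\tzau-r),\,\tzau+r]}$, which is an RCN by Condition~D and the remark immediately following it.

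The strategy is to establish uniqueness inside $G^{\hat\delta}$: any two solutions of~\eqref{eqn:mixed-problem-type2} sharing source $\rho$ and identical initial data on the compact set $K := B\cap\Gi$ must coincide at $(\tnot, \xnot)$. Writing $w$ for their difference, I would multiply the homogeneous equation $w_{tt}+Hw=0$ by $w_t$, approximate $w$ by a mollified sequence as in the proof of Theorem~\ref{thm:domain-dependence}, and integrate over $G^{\hat\delta}$ via the divergence identities~\eqref{eqn:ener-ineq-2}--\eqref{eqn:potentials-identity}. The bottom-face contribution drops out because $w$ and $w_t$ vanish on the base by construction, leaving only the lateral-surface flux $E(\tnot)=0$ in the form~\eqref{eqn:energy-integral}.

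The crucial step is the lower bound $E(\tnot)\geq 0$, obtained by mirroring the algebraic manipulations of Lemma~\ref{lemma:energy-integral}: using $|\nabla\tau(\xnot,\cdot)|=\qm^{-1/2}\leq 1$ from~\eqref{eqn:limited-grad-tau} and the Jacobian identity~\eqref{eqn:dS}, I would project the lateral surface $\hat S$ onto the base $B$ and apply the Cauchy inequality to reach the expression $\frac{1}{(1-\hat\delta)^{1/2}}\int_B(\hat\delta|\nabla w|^2+Vw^2)d\mu$. Since the cone's lateral edge meets $\{t=0\}$ on $\partial B$ where $w\equiv 0$, the projected function has vanishing boundary trace, so its zero-extension belongs to $W^{1,2}_0$ of the enclosing RCN layer. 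Lemma~\ref{lemma:positivity} then bounds the $\Vm w^2$ term by $\delta\int|\nabla w|^2$; because $\hat\delta>\delta$, the integrand is non-negative, $E(\tnot)=0$ forces $\nabla w\equiv 0$ inside $G^{\hat\delta}$, and the initial condition $w=0$ on $B$ propagates to give $w\equiv 0$ in the cone. In particular $w(\tnot, \xnot)=0$, so $K$ is the required compact subdomain of $\Gi$.

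The chief obstacle is the last step: verifying that the zero-extension of the projected trace of $w$ into $\Gi_{[\max(0,\,\tzau-r),\,\tzau+r]}$ is admissible in Lemma~\ref{lemma:positivity}. This is the analogue of Proposition~\ref{prop:approx} in the present infinity-neighborhood setting and should rest on the regularity of $\partial B$ (inherited from the $\Lip^{1,1}$ regularity of the $\tau$-spheres around $\xnot$) together with the classical Calder\'on extension machinery already invoked in Theorem~\ref{thm:static-dirichlet}. Once this technicality is handled, the argument closes in direct parallel with Corollary~\ref{corol:unique-solutions}.
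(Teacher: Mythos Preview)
Your cone-based strategy runs into a genuine obstacle that is more serious than the regularity issue you flag at the end. The lateral energy estimate of Lemma~\ref{lemma:energy-integral}, which you intend to mirror, relies crucially on the bound $|\nabla\tau|=\qm^{-1/2}\leq 1$ from~\eqref{eqn:limited-grad-tau}. But that bound is asserted only for the regular and singular RCNs $\mTa_{p,\tzau}$ and $\mTa_{\Gammaj,\tzau}$, where one may normalize $\qm>1$; in the infinity neighborhoods $\Gi$ one has $\qm\to 0$ along the minimizing curves (this is precisely what makes $\tau\to\infty$), so $|\nabla\tau|=\qm^{-1/2}$ is unbounded. Consequently the pointwise inequality you need on the lateral surface, namely that $(\partial_t w)^2+|\nabla w|^2+Vw^2-2(1-\hat\delta)^{1/2}|\nabla\tau|\,|\partial_t w|\,|\nabla w|$ dominate something to which Lemma~\ref{lemma:positivity} applies, fails: for any fixed slope $(1-\hat\delta)^{1/2}$ the cross term overwhelms the quadratic terms wherever $\qm$ is small, and no choice of $\hat\delta<1$ rescues this uniformly over the cone.

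The paper avoids the difficulty by abandoning cones altogether in $\Gi$ and invoking Theorem~\ref{thm:domain-dependence-type2} directly: one fixes the layer $\Gi_{[\tau_1,\tau_2]}$ with $\tau_1<\tau(\parGi,\xnot)<\tau_2$, which by Condition~D is an RCN, and applies the \emph{cylinder} energy identity~\eqref{eqn:domain-dependence-type2}. In the cylinder the lateral contribution vanishes identically because of the Dirichlet condition, so no bound on $|\nabla\tau|$ is ever needed; the value $u(\tnot,\xnot)$ then depends only on the data in the compact layer $\Gi_{[\tau_1,\tau_2]}$ (together with the Dirichlet data on its boundary). If you want to retain a cone picture, you would have to curve the lateral surface so that its slope tracks $\qm^{1/2}$, but that is a substantially different argument from the one you sketch.
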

\begin{proof}
  Define $\tzau$ such that $\tau(\parGi, \xnot) = \tzau$, then select $\toau, \ttwo > 0$
  such that $\toau < \tzau < \ttwo$ and with a small $\delta^{\prime} > 0$ such that
  $\ttwo - \toau< \delta^{\prime}.$
  Then according to the Theorem~\ref{thm:domain-dependence-type2}, $u(\tnot, \xnot)$ depends
  on the initial conditions in $\overGi_{[\toau, \ttwo]}$ and Dirichlet boundary conditions at
  $[0, \tnot] \times \parGi_{[\toau, \ttwo]}$.
\end{proof}

\begin{lemma}\label{lemma:domain-dependence-singularity}
  \textbf{Domain of Dependence for Points in both Singularity and Regular Neighborhoods.}
  With the conditions~\ref{cond_c_1},~\ref{cond_c_3},~\ref{cond_c_4}, D,
  and the conditions of Theorem~\ref{thm:domain-dependence}, there exists $\tilde{T} > 0$ such that
  for any point $(\tnot, \xnot),\ \xnot \in \mTa_{\Gammajk, \taujk}$ for some $k = 1, 2,\ \dots$,
  there exists a finite set of open covers
  $\mTa_{\Gammajk, \taujk}$ and/or $\mTa_{p_{\alphai, \tau_{\alphai}}}, i = 1,\ 2,\ ...$ of $\xnot$ such that
  $u(\tnot, \xnot)$ depends on the initial conditions in
  $\{(t, x): t = \tnot-\tilde{T},
  x\in (\cup_{k}\mTa_{\Gammajk, \taujk})\cup(\cup_{i} \mTa_{p_{\alphai, \tau_{\alphai}}})\}$.
\end{lemma}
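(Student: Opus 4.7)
The plan is to transfer the domain of dependence provided by Theorem~\ref{thm:domain-dependence}, which is naturally rooted at the center of an RCN, to a cone based at the generic interior point $\xnot$ via a time shift, using Condition~C.4 to produce a lower bound on the usable cone height that is uniform in $\xnot$. First I would fix $\tilde T \vcentcolon= (1-\hat\delta)^{1/2}\teps$ for some $\hat\delta\in(\delta,1)$, with $\delta$ from Lemma~\ref{lemma:positivity} and $\teps$ from C.4; this value is independent of $\xnot$. Since $\xnot\in\mTa_{\Gammaj,\tau_j}$, Condition~C.4 supplies a cover element $\mTa_{k_{\alpha_0},\tau_{\alpha_0}}$ containing $\xnot$ with $\tau(\xnot,k_{\alpha_0})\leq\tau_{\alpha_0}-\teps$; by Condition~D this element is an RCN, and by the local finiteness part of C.3 at most finitely many other cover elements meet $\xnot$.

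Next, inside $\mTa_{k_{\alpha_0},\tau_{\alpha_0}}$ I would build the cone with apex $(\tnot+s,k_{\alpha_0})$, slope $(1-\hat\delta)^{1/2}$, and base at $t=\tnot-\tilde T$, choosing $s\vcentcolon=(1-\hat\delta)^{1/2}\tau(\xnot,k_{\alpha_0})$. Its cross-section at time $\tnot$ is the $\tau$-ball of radius $\tau(\xnot,k_{\alpha_0})$ around $k_{\alpha_0}$, so it contains $(\tnot,\xnot)$; its base has $\tau$-radius $\teps+\tau(\xnot,k_{\alpha_0})\leq\tau_{\alpha_0}$, so it lies inside the RCN. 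After the time translation $t\mapsto t-(\tnot-\tilde T)$ this is exactly the cone $G^{\hat\delta}$ of Theorem~\ref{thm:domain-dependence} for $\mTa_{k_{\alpha_0},\tau_{\alpha_0}}$, and Corollary~\ref{corol:unique-solutions} then gives uniqueness inside the cone in terms of the base data, yielding the required dependence of $u(\tnot,\xnot)$ on the initial conditions at $t=\tnot-\tilde T$ restricted to $\mTa_{k_{\alpha_0},\tau_{\alpha_0}}$. Adjoining $\mTa_{\Gammaj,\tau_j}$ together with the finitely many other cover elements meeting $\xnot$ yields the finite union demanded by the lemma.

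The hard part will be that Corollary~\ref{corol:unique-solutions} is formally stated for the local class $\mU$, which imposes zero Dirichlet data on $[0,T]\times\partial\mTa_{k_{\alpha_0},\tau_{\alpha_0}}$, whereas the solution of the global Cauchy problem~\eqref{eqn:hyperbolic-equation-main} satisfies no such boundary condition a priori. The observation I would exploit is that the cone $G^{\hat\delta}$ is strictly interior to the RCN for every shifted time $t'>0$: its lateral surface $\hat S$ never touches $[0,T]\times\partial\mTa_{k_{\alpha_0},\tau_{\alpha_0}}$, so the integration by parts in the proof of Theorem~\ref{thm:domain-dependence} picks up no contribution from the cylinder's lateral boundary. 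Consequently the energy identity of Theorem~\ref{thm:domain-dependence}, and hence uniqueness in the interior of the cone, extends to the restriction of any global solution of~\eqref{eqn:hyperbolic-equation-main} to $G^{\hat\delta}$, completing the argument without any use of the local Dirichlet condition.
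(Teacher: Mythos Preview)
Your approach is genuinely different from the paper's and, modulo one imprecision, it is sound. The paper does \emph{not} apply the cone of Theorem~\ref{thm:domain-dependence} directly to the global solution. Instead it decomposes the data by the partition of unity built in Lemma~\ref{lemma:semibounded}, solves in each cover cylinder an auxiliary Cauchy--Dirichlet problem~\eqref{eqn:partition-cauchy} with initial data $J_{p_{\alpha_i},\tau_{\alpha_i}}f$, $J_{p_{\alpha_i},\tau_{\alpha_i}}g$, and then argues that covers in which $\xnot$ lies within $\teps/2$ of the boundary contribute nothing at $(\tnot,\xnot)$; the remaining finitely many covers supply the sum formula $u(\tnot,\xnot)=\sum_i u_{p_{\alpha_i},\tau_{\alpha_i}}(\tnot,\xnot)$ with the common time step $\tilde T=(1-\delta)^{1/2}\teps/2$. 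Your route bypasses the partition of unity and the auxiliary Dirichlet problems entirely: Condition~C.4 hands you a single cover element in which the cone over $(\tnot,\xnot)$ fits with uniform height, and you invoke Corollary~\ref{corol:unique-solutions} once. This is shorter and avoids the somewhat delicate bookkeeping the paper needs to justify its sum formula; the paper's decomposition, on the other hand, is reused later in Theorem~\ref{thm:gfps} to handle the interaction with neighborhoods of infinity.

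The imprecision is in your treatment of the ``hard part''. You correctly observe that the divergence-theorem step in Theorem~\ref{thm:domain-dependence} never sees $[0,T]\times\partial\mTa_{k_{\alpha_0},\tau_{\alpha_0}}$, so the energy \emph{identity} survives for the global solution. But uniqueness also needs the non-negativity $E(\hat T')\geq 0$ from Lemma~\ref{lemma:energy-integral}, and that step invokes Lemma~\ref{lemma:positivity} on $\phi(x)=u(\hat t(x),x)$, which requires $\phi$ to vanish on the boundary of the \emph{base} of your cone, namely $\partial\mTa_{k_{\alpha_0},r}$ with $r=\teps+\tau(\xnot,k_{\alpha_0})$. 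That is not supplied by the cone being interior to the cylinder; it is supplied, trivially, by the uniqueness setup itself: the difference $w=u_1-u_2$ vanishes on the entire base at $t'=0$, hence in particular on its boundary. You should say this explicitly. A second minor point: after the time shift your cone is $G^{\hat\delta}$ for the sub-ball $\mTa_{k_{\alpha_0},r}$, not for the full $\mTa_{k_{\alpha_0},\tau_{\alpha_0}}$; since the RCN conditions~\eqref{dfn:positive-neighborhood-upd} are monotone in the radius, the sub-ball is again an RCN with the same~$\delta$, so Theorem~\ref{thm:domain-dependence} and Lemma~\ref{lemma:positivity} apply there as stated.
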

\begin{proof}
  Owing to Condition~\ref{cond_c_3}, we may find a finite set of not more than $m$ open covers $\xnot$ such that $\xnot \in (\cap_{k}\mTa_{\Gammajk, \taujk}) \cap \left(\cap_{i}\mTa_{p_{\alphai}, \tau_{\alphai}}\right)$.
  Our lemma states that we can find a subset of this cover depending on $\xnot$, so that the value of $u(\tnot, \xnot)$ depends only on the initial conditions in the above set with a fixed $\tilde{T}$.

  Define $\tilde{T} = (1-\delta)^{1/2}\teps/2$, where $\delta$ is defined in~\ref{lemma:positivity},
  and, according to Condition~D, it is the same for all covers;
  the parameter $\teps$ is defined in Condition~\ref{cond_c_4}.
  
  For singular covers of $\xnot$, let us consider a mixed problem similar
  to~\eqref{eqn:hyperbolic-equation} in the cylinder
  $[\tnot - \tilde{T}, \tnot - \tilde{T} + T_{j_{k}}]\times \mTa_{\Gammajk, \taujk}$
  with $T_{j_{k}} \vcentcolon= (1-\delta)^{1/2}\taujk$
  \begin{equation}\label{eqn:partition-cauchy-singular}
    \begin{aligned}
      & \frac{\partial^{2} u_{\Gammajk, \taujk}}{\partial t^{2}} + Hu_{\Gammajk, \taujk}  = J_{\Gammajk, \taujk}(x)\rho(t, x)\\
      & u_{\Gammajk, \taujk}(\tnot - \tilde{T}, x) = J_{\Gammajk, \taujk}(x)f(x),\\
      & \frac{\partial u_{\Gammajk, \taujk}(\tnot - \tilde{T}, x)}{\partial t} = J_{\Gammajk, \taujk}(x)g(x),\\
      & u_{\Gammajk, \taujk}(t, x), \left.\frac{\partial u_{\Gammajk, \taujk}(t, x)}{\partial t}\right|_{\partial \bmTa_{\Gammajk, \taujk}} = 0
    \end{aligned}
  \end{equation}
  where the partition of the unity function $J_{\Gammajk, \taujk}$ is defined in Lemma~\ref{lemma:semibounded},
  so the solution $u_{\Gammajk, \taujk}\in\mU$ exists and is unique in the defined dependency cone.
  It is clear from the definition that $J_{\Gammajk, \taujk}(x) = 0$ in a small neighborhood of $\xnot$, if
  $\tau\left(\xnot, \partial{\bmTa_{\Gammajk, \taujk}}\right) < \teps/2$; in that case, the solution
  $u_{\Gammajk, \taujk} = 0$ in a neighborhood of $(\tnot, \xnot)$.

  Similarly, for the regular covers of $\xnot$, we consider this mixed problem
  in the cylinders $[\tnot - \tilde{T}, \tnot - \tilde{T} + T_{\alphai}] \times \bmTa_{p_{\alphai, \tau_{\alphai}}}$
  with $T_{\alphai} \vcentcolon= (1-\delta)^{1/2}\tau_{\alphai}$
  \begin{equation}\label{eqn:partition-cauchy}
    \begin{aligned}
      & \frac{\partial^{2} u_{p_{\alphai, \tau_{\alphai}}}}{\partial t^{2}} + Hu_{p_{\alphai, \tau_{\alphai}}}  = J_{p_{\alphai, \tau_{\alphai}}}(x)\rho(t, x)\\
      & u_{p_{\alphai, \tau_{\alphai}}}(\tnot - \tilde{T}, x) = J_{p_{\alphai, \tau_{\alphai}}}(x)f(x),\\
      & \frac{\partial u_{p_{\alphai, \tau_{\alphai}}}(\tnot - \tilde{T}, x)}{\partial t} = J_{p_{\alphai, \tau_{\alphai}}}(x)g(x),\\
      & u_{p_{\alphai, \tau_{\alphai}}}(t, x), \left.\frac{\partial u_{p_{\alphai, \tau_{\alphai}}}(t, x)}{\partial t}\right|_{\partial \bmTa_{p_{\alphai, \tau_{\alphai}}}} = 0
    \end{aligned}
  \end{equation}
  Additionally, $J_{p_{\alphai, \tau_{\alphai}}}(x) = 0$ in a small neighborhood of $\xnot$, if
  $\tau\left(\xnot, \partial{\mTa_{p_{\alphai, \tau_{\alphai}}}}\right) < \teps/2$; in that case, the solution
  $u_{p_{\alphai, \tau_{\alphai}}} = 0$ in a neighborhood of $(\tnot, \xnot)$.

  From problems~\eqref{eqn:partition-cauchy-singular} and~\eqref{eqn:partition-cauchy}, we can conclude that the equality
  \begin{equation*}
    u(\tnot, \xnot) = \sum_{i}u_{p_{\alphai, \tau_{\alphai}}}(\tnot, \xnot) + \sum_{k}u_{\Gammajk, \taujk}(\tnot, \xnot)
  \end{equation*}
  holds, where the sums are taken over only those covers
  of $\xnot$ where $\tau(\xnot, \partial \bmTa_{p_{\alphai, \tau_{\alphai}}}) \geq \teps/2$
  and $\tau(\xnot, \partial\bmTa_{\Gammajk, \taujk}) \geq \teps/2$, so the solution at $(\tnot, \xnot)$
  depends on the values of the solutions of both problems above on a finite set of covers with the same $t = \tnot - \tilde{T} = \tnot - (1-\delta)^{1/2}\teps/2$.
\end{proof}

\begin{lemma}\label{lemma:interaction-with-infinity}
  \textbf{Domain of Dependence for Points in Vicinity of Neighborhood of Infinity.}
  With the conditions~\ref{cond_c_1},~\ref{cond_c_2},~\ref{cond_c_3},~\ref{cond_c_4}, D,
  and the conditions of the Theorem~\ref{thm:domain-dependence},
  for any $(\tnot, \xnot)\in \mathbb{R}\times M$ and open covers $\mTa_{p_{\alphai, \tau_{\alphai}}}, i = 1,\dots$
  and $\mTa_{\Gammajk, \taujk}, k = 1,\dots$ of regular or singular points of the potential $\qm$
  such that $\xnot\in \mTa_{p_{\alphai, \tau_{\alphai}}}, \mTa_{\Gammajk, \taujk}$ with
  $\mTa_{p_{\alphai, \tau_{\alphai}}}, \mTa_{\Gammajk, \taujk}\cap \overGi \ne \emptyset$ for some fixed $\Gi$,
  there exists $\tzau(\xnot) > 0$ such that its domain of dependence
  $K_{(\tnot,\xnot)}\cap ((-\infty, \tnot] \times \overGi) \subset ((-\infty, \tnot] \times \overGi_{\tzau})$
  for any $\tnot > 0$.
\end{lemma}
\begin{proof}
  As in Lemma~\ref{lemma:domain-dependence-singularity}, we consider these mixed problems in the vicinity
  of $\Gi$ for the regular covers of $\xnot$
  in the cylinders $[\tnot - \tilde{T}, \tnot - \tilde{T} + T_{\alphai}] \times \bmTa_{p_{\alphai}, \tau_{\alphai}}$
  with $T_{\alphai} \vcentcolon= (1-\delta)^{1/2}\tau_{\alphai}$
  \begin{equation}\label{eqn:partition-cauchy-infty}
    \begin{aligned}
      & \frac{\partial^{2} u_{p_{\alphai, \tau_{\alphai}}}}{\partial t^{2}} + Hu_{p_{\alphai, \tau_{\alphai}}}  = J_{\overGi, \teps}(x)J_{p_{\alphai, \tau_{\alphai}}}(x)\rho(t, x)\\
      & u_{p_{\alphai, \tau_{\alphai}}}(\tnot - \tilde{T}, x) = J_{\overGi, \teps}(x)J_{p_{\alphai, \tau_{\alphai}}}(x)f(x),\\
      & \frac{\partial u_{p_{\alphai, \tau_{\alphai}}}(\tnot - \tilde{T}, x)}{\partial t} = J_{\overGi, \teps}(x)J_{p_{\alphai, \tau_{\alphai}}}(x)g(x),\\
      & u_{p_{\alphai, \tau_{\alphai}}}(t, x), \left.\frac{\partial u_{p_{\alphai, \tau_{\alphai}}}(t, x)}{\partial t}\right|_{\partial \bmTa_{p_{\alphai, \tau_{\alphai}}}} = 0
    \end{aligned}
  \end{equation}
  and
  \begin{equation}\label{eqn:partition-cauchy-infty-compl}
    \begin{aligned}
      & \frac{\partial^{2} u_{p_{\alphai, \tau_{\alphai}}}}{\partial t^{2}} + Hu_{p_{\alphai, \tau_{\alphai}}}  = (1 - J_{\overGi, \teps}(x))J_{p_{\alphai, \tau_{\alphai}}}(x)\rho(t, x)\\
      & u_{p_{\alphai, \tau_{\alphai}}}(\tnot - \tilde{T}, x) = (1 - J_{\overGi, \teps}(x))J_{p_{\alphai, \tau_{\alphai}}}(x)f(x),\\
      & \frac{\partial u_{p_{\alphai, \tau_{\alphai}}}(\tnot - \tilde{T}, x)}{\partial t} = (1 - J_{\overGi, \teps}(x))J_{p_{\alphai, \tau_{\alphai}}}(x)g(x),\\
      & u_{p_{\alphai, \tau_{\alphai}}}(t, x), \left.\frac{\partial u_{p_{\alphai, \tau_{\alphai}}}(t, x)}{\partial t}\right|_{\partial \bmTa_{p_{\alphai, \tau_{\alphai}}}} = 0.
    \end{aligned}
  \end{equation}
  Mixed problems in the neighborhoods of singular points $\bmTa_{\Gammajk, \taujk}$ are defined similarly
  to~\eqref{eqn:partition-cauchy-infty} and~\eqref{eqn:partition-cauchy-infty-compl}.

  From the definition~\eqref{eqn:cut-off-type2} of $J_{\overGi, \teps}$, it is clear that
  $\text{supp}(u_{p_{\alphai, \tau_{\alphai}}}(\tnot-\tilde{T}, \cdot))\cap\overGi \subset\overGi_{\tau_{\alphai}} $
  of~\eqref{eqn:partition-cauchy-infty}, and, as for the mixed problem in~\eqref{eqn:mixed-problem-type2},
  this solution is uniquely defined in $[T, \tnot-\tilde{T}) \times \overGi_{\tau_{\alphai}}$ for any $T < \tnot-\tilde{T}.$

  For problem~\eqref{eqn:partition-cauchy-infty-compl} and definition~\eqref{eqn:cut-off-type2}, $\text{supp}(u_{p_{\alphai, \tau_{\alphai}}}(\tnot-\tilde{T}, \cdot))\cap\overGi \subset\overGi_{\teps}$,
  and we define
  $$\tzau \vcentcolon= \sup_{x\in\overGi_{\teps}}\left(\{\tau_{\alphai}: x\in \mTa_{p_{\alphai}, \tau_{\alphai}}\}
    \cup\{\taujk: x \in \mTa_{\Gammajk, \taujk}\}\cup\{\teps\}\right),$$
  and it is a finite value due to the finiteness of the cover $\overGi_{\teps}$,
  as stated in condition~\ref{cond_c_3}. 
\end{proof}

We are now ready to combine Lemmas~\ref{lemma:lemma-domain-dependence-type2},
~\ref{lemma:domain-dependence-singularity}, and~\ref{lemma:interaction-with-infinity} into the following
\begin{theorem}\label{thm:gfps}
  \textbf{Global Final Propagation Speed.}
  As stated previously in Lemma~\ref{lemma:lemma-domain-dependence-type2}
  and in Theorem~\ref{thm:domain-dependence-type2},
  assume that RCNs of infinity $\overGi, i = 1,\ ...$ satisfy conditions~\ref{cond_c_2} and D. 

  Similarly, as formulated 
  in Lemmas~\ref{lemma:domain-dependence-singularity} and~\ref{lemma:interaction-with-infinity},
  and in Theorem~\ref{thm:domain-dependence},
  for the regular and singular points, 
  we assume that the conditions~\ref{cond_c_1},~\ref{cond_c_2},~\ref{cond_c_3},~\ref{cond_c_4}, and D are satisfied.
  
  The solution of $u\in\mU$ of the
  Cauchy problem~\eqref{eqn:hyperbolic-equation-main} for any $(\tnot, \xnot)$
  uniquely depends on the initial conditions
  on some compact set $K_{(\tnot, \xnot)} \Subset M$.
\end{theorem}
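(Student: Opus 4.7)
The idea is to iterate the two local domain-of-dependence results (Lemma~\ref{lemma:lemma-domain-dependence-type2} and Lemma~\ref{lemma:domain-dependence-singularity}) backward in time from the point $(\tnot, \xnot)$ until we reach the initial hyperplane $\{t = 0\}$. Both lemmas furnish a fixed time increment $\tilde{T} = (1-\delta)^{1/2}\teps/2 > 0$ (resp.\ a positive shell thickness $\delta^{\prime}$) that depends only on the structural constants $\tzau$, $\teps$, and $\delta$ of the admissible cover and not on the particular point. Consequently only $N \leq \lceil \tnot/\tilde{T}\rceil$ iterations are needed, and it suffices to show that at each step the current spatial dependence set is contained in a finite union of elements of the admissible cover.

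First I would treat the base step. Decompose $M$ according to the admissible cover of Condition~C: at level $k=0$ place $\xnot$ into one of three categories~---~an infinity neighborhood $\Gi$, a singularity neighborhood $\mTa_{\Gammaj,\tau_j}$, or the regular stratum. Using the partition of unity $\{J_\beta\}$ built in Lemma~\ref{lemma:semibounded}, write $u=\sum_\beta u_\beta$ locally, where each $u_\beta$ solves a mixed problem of the form~\eqref{eqn:partition-cauchy} on its cover element. By Corollaries~\ref{corol:unique-solutions} and~\ref{corol:unique-solutions-type2}, each $u_\beta(\tnot,\xnot)$ is uniquely determined by the data restricted to the corresponding cone (resp.\ shell $\Gi_{[\tau_1,\tau_2]}$) traced back by time $\tilde{T}$. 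By Condition~C.3 and C.4 only finitely many covers $\mathcal{V}_{\beta_1},\dots,\mathcal{V}_{\beta_{m(\xnot)}}$ actually contribute at $\xnot$, so the dependence set $K_1 \Subset M$ at time $\tnot - \tilde{T}$ is a finite union of compact cover elements.

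Second, I would iterate. For each of the finitely many points one needs to know the value of $u$ at~--~at the level of $\{t = \tnot-\tilde{T}\}$~--~repeat the decomposition, producing a set $K_2 \Subset M$ at time $\tnot - 2\tilde{T}$. Local finiteness of the cover (Conditions~C.3.1 or the chain structure in~C.3.2) together with the separability~\eqref{eqn:separability} ensure that each step adjoins only a uniformly bounded number of new cover elements, hence the total number of cover elements after $N$ steps is finite, and $K_N\Subset M$. For propagation into infinity neighborhoods, Lemma~\ref{lemma:lemma-domain-dependence-type2} guarantees that the dependence of a value $u_\beta(t,y)$ with $y\in\Gi$ lies in a bounded shell of $\Gi$, so the iteration never reaches $\tau=\infty$ and the compact containment is preserved. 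Setting $K_{(\tnot,\xnot)} \vcentcolon= K_N$ completes the construction.

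The main obstacle is verifying that the \emph{global} weak solution $u$ in $D(H_0^*)$ is in fact locally reconstructed by the partition-of-unity pieces $u_\beta$ solving the local mixed problems in the class $\mU$. By Lemma~\ref{lemma:operator-domain}, $u$ has the local regularity needed to apply the energy identity of Theorem~\ref{thm:domain-dependence} on each cone, but one must verify that $J_\beta u$ satisfies the hypotheses of the local Cauchy problem~\eqref{eqn:hyperbolic-equation}~---~in particular that the commutator $[H, J_\beta]$ produces only a bounded-coefficient term (the IMS error), which together with the original source $\rho$ stays in $L^2$ of the local cylinder. Once this is established, the local uniqueness corollaries apply to each $u_\beta$ on its cone, the sum reconstructs $u$ on overlaps because of the partition-of-unity property, and the inductive step goes through. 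The technical crux is therefore the compatibility of the global $D(H_0^*)$-regularity with the local $\mU$-class and the reassembly of the pieces, rather than the geometric iteration itself.
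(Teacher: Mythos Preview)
Your proposal is essentially correct and follows the same overall strategy as the paper: iterate the local dependence lemmas backward in time in steps of a fixed size $\tilde T=(1-\delta)^{1/2}\teps/2$, using the partition of unity from Lemma~\ref{lemma:semibounded} to decompose the solution at each level, and appeal to Lemma~\ref{lemma:lemma-domain-dependence-type2} to keep the pieces that land in an infinity neighborhood confined to a bounded shell.

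The paper's proof differs in emphasis. It takes Lemmas~\ref{lemma:lemma-domain-dependence-type2} and~\ref{lemma:domain-dependence-singularity} as having already settled the pure-infinity and the regular/singular cases, and devotes the argument to the one remaining interaction: a regular cover element $\mTa_{p_{\alpha_i},\tau_{\alpha_i}}$ whose base reaches into some $\Gj$. There it introduces an explicit \emph{trimming procedure}: at the time level $\tilde t_0$ one splits the data by the cut-off $J_{\Gj,\teps/2}$ into (i) a piece compactly supported inside $\Gj_{\tzau}$, which is then governed for \emph{all} remaining time by the cylinder mixed problem~\eqref{eqn:mixed-problem-type2} and hence never leaves $\Gj_{\tzau}$, and (ii) a remainder supported in the narrow strip $\Gj_{\teps/2}$ and outside $\Gj$, which can again be covered by regular cones satisfying Condition~C.4. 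This trim-then-recurse mechanism is what guarantees that repeated iterations do not push the dependence set arbitrarily deep into $\Gi$; your sentence ``Lemma~\ref{lemma:lemma-domain-dependence-type2} guarantees \ldots'' is the right idea, but the paper makes the splitting explicit so that the infinity piece is frozen in a fixed shell once and for all rather than being re-decomposed at every step. Your more uniform iteration would also work, but you should spell out that once a piece is handed to the infinity cylinder problem it is retired from the recursion; otherwise the ``uniformly bounded number of new cover elements'' claim needs more justification.

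Your identification of the technical crux---that the global $D(H_0^*)$-solution restricts to the local class $\mU$ on each cover element so that Corollaries~\ref{corol:unique-solutions} and~\ref{corol:unique-solutions-type2} apply, with the commutator $[H,J_\beta]$ contributing only a bounded IMS error to the source---is accurate and in fact more explicit than the paper, which relies on Lemma~\ref{lemma:operator-domain} for this without further comment.
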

\begin{proof}
  It is clear from the conclusions of Lemmas~\ref{lemma:lemma-domain-dependence-type2},~\ref{lemma:domain-dependence-singularity},
  and~\ref{lemma:interaction-with-infinity} that we can reach the initial conditions for $t=0$ in $[\tnot/\tilde{T}] + 1$ steps, and
  the resulting domain of dependence is bounded. 
\end{proof}

We are now ready to formulate 
\begin{theorem}\label{thm:essential-selfadjoinness}
  \textbf{Essential Self-Adjointness of $H_{0}$.}
  As in Lemma~\ref{lemma:semibounded}, suppose that $M$ has an admissible open cover satisfying Conditions C and D,
  and, as in Lemma~\ref{lemma:lemma-domain-dependence-type2} and Theorem~\ref{thm:domain-dependence-type2},
  RCNs of infinity $\overGi, i = 1,\ ...$ satisfy conditions~\ref{cond_c_2} and D. 

  As stated in Lemmas~\ref{lemma:domain-dependence-singularity} and~\ref{lemma:interaction-with-infinity},
  and in Theorem~\ref{thm:domain-dependence},
  for the regular and singular points of $M$,
  we assume that the conditions~\ref{cond_c_1},~\ref{cond_c_2},~\ref{cond_c_3},~\ref{cond_c_4}, and D are satisfied.

  The operator $H_{0}$ is essentially self-adjoint.
\end{theorem}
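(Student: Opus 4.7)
The plan is to combine three ingredients already in place: semiboundedness of $H_0$ from Lemma~\ref{lemma:semibounded}, the global finite propagation speed established in Theorem~\ref{thm:gfps}, and Berezansky's method of hyperbolic equations (Theorem 6.2 in~\cite{berezansky1978self}). Berezansky's theorem states that a densely defined symmetric operator, semibounded from below, whose abstract Cauchy problem~\eqref{eqn:hyperbolic-equation-main} admits a unique strong solution on some interval $[0, T)$, is essentially self-adjoint. Density and symmetry of $H_0$ are immediate since $C_0^\infty(M)\subset D(H_0)$, and Lemma~\ref{lemma:semibounded} supplies semiboundedness under the assumed Conditions C and D. So the task reduces to establishing uniqueness of strong solutions of~\eqref{eqn:hyperbolic-equation-main}.

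To prove uniqueness, I would take a solution $u$ of~\eqref{eqn:hyperbolic-equation-main} with $f=0$ and argue $u\equiv 0$. The idea is to localize using the admissible cover from Condition C together with the partition of unity constructed in the proof of Lemma~\ref{lemma:semibounded}, reducing the global uniqueness question to local uniqueness inside each cover element, where Corollaries~\ref{corol:unique-solutions} and~\ref{corol:unique-solutions-type2} already apply.

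The key bridge step is to verify that a strong solution $u\in C^{2}([0,T), D(H_0^{*}))$ of the weak formulation~\eqref{eqn:hyperbolic-equation-main-weak}, when restricted to any cover element $\mTa_{p,\tzau}$, $\mTa_{\Gammaj,\tzau}$, or $\Gi_{\tzau}$, produces a member of the class $\mU$ defined in~\eqref{dfn:solutions-class} or~\eqref{dfn:solutions-class-type2}. Lemma~\ref{lemma:operator-domain} already gives $u(t,\cdot)\in W^{1,2}_{\loc}(M)$ with $\Vp^{1/2}u,\,\qm^{1/2}u\in L^{2}_{\loc}(M)$; time regularity is inherited from the strong-solution framework. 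Once this identification is in place, the chaining argument of Theorem~\ref{thm:gfps} applies verbatim: for any $(\tnot,\xnot)\in[0,T)\times M$ the domain of dependence is some compact $K_{(\tnot,\xnot)}\Subset M$ on which the initial data vanishes, forcing $u(\tnot,\xnot)=0$.

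The main obstacle, I expect, is precisely the identification step. A priori, cutoffs $J_{\beta}u$ satisfy the equation only weakly, with commutator terms $-2\langle\nabla J_{\beta},\nabla u\rangle-(\Delta J_{\beta})u$ acting as effective source functions; I would have to check that these commutators lie in $L^{2}$ uniformly on the cylinder and that the trace of $u$ on the interior Lipschitz boundaries $\partial \mTa_{p,\tzau}$ and $\parGi_{\tzau}$ is absorbed by the cutoff so that the Dirichlet boundary condition~\eqref{eqn:hyperbolic-equation:e} or~\eqref{eqn:mixed-problem-type2:e} is genuinely satisfied by $J_{\beta}u$. The partition of unity in Lemma~\ref{lemma:semibounded} has uniformly bounded gradient, so the commutator estimate should follow from Lemma~\ref{lemma:operator-domain} together with the local finiteness in Condition C.3; the trace issue is handled because each $J_{\beta}$ vanishes in a $\teps/2$-collar of the corresponding boundary by construction. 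Once this bookkeeping is done, invoking Berezansky closes the argument.
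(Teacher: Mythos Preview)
Your proposal is correct and follows essentially the same route as the paper: semiboundedness from Lemma~\ref{lemma:semibounded}, the domain inclusion of Lemma~\ref{lemma:operator-domain} to place strong solutions in the local class $\mU$, the GFPS machinery of Lemmae~\ref{lemma:lemma-domain-dependence-type2}, \ref{lemma:domain-dependence-singularity} and Theorem~\ref{thm:gfps} for uniqueness, and then Berezansky's Theorem~6.2 to conclude. The paper's own proof is terser and does not spell out the commutator bookkeeping you describe, but the architecture is the same.
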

\begin{proof}
  In Lemma~\ref{lemma:semibounded}, we establish the semiboundedness of $H_{0}$, the Proposition~\ref{prop-existence-solutions} proves the existence of solutions of~\eqref{eqn:hyperbolic-equation-main}. In Lemma~\ref{lemma:operator-domain}, we prove an important inclusion for the domain $D(H_{0}^{*})$, and Lemmas~\ref{lemma:lemma-domain-dependence-type2},~\ref{lemma:domain-dependence-singularity}, and~\ref{lemma:interaction-with-infinity}, together
  with the Theorem~\ref{thm:gfps}, establish the uniqueness of solutions from the classes $\mU$ defined in~\eqref{dfn:solutions-class} and~\eqref{dfn:solutions-class-type2}, which, in turn, imply the uniqueness of solutions $u\in C^{2}\left([0, \tilde{T}), D(H_{0}^{*})\right)$ of the Cauchy problem~\eqref{eqn:hyperbolic-equation-main} for $\tilde{T}$ defined in Lemma~\ref{lemma:domain-dependence-singularity}; in fact, the uniqueness is true for any interval $[0, T), T > 0$.

  Thus, according to Theorem~6.2~\cite{berezansky1978self}, the essential self-adjointness of $H_{0}$ follows.
\end{proof}

Notably, we can formulate a very simple
\begin{corollary}\label{cor:operator-minorant}
  Assume that the hypotheses of Theorem~\ref{thm:essential-selfadjoinness}
  and the conditions of Lemmas~\ref{lemma:semibounded} and~\ref{lemma:operator-domain} are satisfied.
  Suppose that we can find a small $0\leq \tilde{\delta} < 1$
  satisfying $\tilde{\delta} + \delta < 1$ with $\delta$
  defined in Lemma~\ref{lemma:positivity} for all points on $M$, such that the operator inequality holds
  \begin{equation}\label{eqn:operator-inequality}
    -\tilde{\delta}\Delta - \Vm \geq -\qm
  \end{equation}
  in the sense of forms on $D(H_{0})$. The Schr{\"o}dinger operator $H_{0}$ is essentially self-adjoint.
\end{corollary}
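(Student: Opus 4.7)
My plan is to use the operator inequality~\eqref{eqn:operator-inequality} together with Lemma~\ref{lemma:positivity} to get local non-negativity of the quadratic form of $H_{0}$ on each RCN, then promote this to global semiboundedness via IMS localization exactly as in Lemma~\ref{lemma:semibounded}, and finally invoke the remaining pieces of the proof of Theorem~\ref{thm:essential-selfadjoinness} (existence, control of $D(H_{0}^{*})$, GFPS, Berezansky) to conclude essential self-adjointness.

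First I would fix any element of the admissible cover that is an RCN, call it $\mathcal{N}$, and pick $\phi\in D(H_{0})$ with $\text{supp}(\phi)\subset\mathcal{N}$. Evaluating~\eqref{eqn:operator-inequality} on such $\phi$ yields
\begin{equation*}
  (\Vm\phi,\phi) \leq \tilde{\delta}\,(-\Delta\phi,\phi) + (\qm\phi,\phi) = \tilde{\delta}\|\nabla\phi\|^{2} + (\qm\phi,\phi).
\end{equation*}
Since $\phi$ is supported in an RCN, Lemma~\ref{lemma:positivity} gives $(\qm\phi,\phi)\leq\delta\|\nabla\phi\|^{2}$, and combining these yields $(\Vm\phi,\phi)\leq(\tilde{\delta}+\delta)\|\nabla\phi\|^{2}$. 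Therefore
\begin{equation*}
  (H_{0}\phi,\phi) = \|\nabla\phi\|^{2} + (\Vp\phi,\phi) - (\Vm\phi,\phi) \geq (1-\tilde{\delta}-\delta)\|\nabla\phi\|^{2} + (\Vp\phi,\phi) \geq 0,
\end{equation*}
where the coefficient on $\|\nabla\phi\|^{2}$ is strictly positive by the assumption $\tilde{\delta}+\delta<1$.

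Second, for an arbitrary $\phi\in D(H_{0})$, I would apply the IMS Localization Formula~\eqref{eqn:ims-formula} with the subordinate partition of unity $\{J_{\beta}\}$ constructed in the proof of Lemma~\ref{lemma:semibounded}. Each $J_{\beta}\phi$ lies in $D(H_{0})$ (since the cut-offs are Lipschitz) and is supported in a single RCN, so the first step gives $(H_{0}(J_{\beta}\phi),J_{\beta}\phi)\geq 0$. The localization error $\sum_{\beta}|\nabla J_{\beta}|^{2}$ is uniformly bounded by the construction in Lemma~\ref{lemma:semibounded}, hence $H_{0}$ is semibounded from below on $D(H_{0})$.

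Third, once semiboundedness is in hand, the remaining steps of Theorem~\ref{thm:essential-selfadjoinness} apply verbatim under the present hypotheses: Proposition~\ref{prop-existence-solutions} gives existence of strong solutions of~\eqref{eqn:hyperbolic-equation-main}, Lemma~\ref{lemma:operator-domain} controls $D(H_{0}^{*})$, and Theorem~\ref{thm:gfps} supplies global finite propagation speed, from which uniqueness of the solution in the class $\mU$ follows. The Berezansky criterion (Theorem~6.2 of~\cite{berezansky1978self}) then delivers essential self-adjointness of $H_{0}$. The whole argument is almost mechanical; the only delicate point that needs checking is that the operator inequality~\eqref{eqn:operator-inequality}, stated as a form inequality on $D(H_{0})$, legitimately restricts to the localized test functions $J_{\beta}\phi$ — but this is immediate since $D(H_{0})$ is closed under multiplication by the Lipschitz cut-offs $J_{\beta}$ used in the IMS partition.
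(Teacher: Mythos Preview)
Your first two steps are fine and give exactly the local relative bound $(\Vm\phi,\phi)\leq(\tilde\delta+\delta)\|\nabla\phi\|^{2}$ on each RCN, and semiboundedness via IMS then follows. The gap is in your third step, where you assert that ``Theorem~\ref{thm:gfps} supplies global finite propagation speed'' and hence the remaining machinery of Theorem~\ref{thm:essential-selfadjoinness} applies \emph{verbatim}. It does not: the GFPS argument rests on the uniqueness Corollaries~\ref{corol:unique-solutions} and~\ref{corol:unique-solutions-type2}, which in turn rest on the non-negativity of the energy integral $E(\hat T)$ in Lemma~\ref{lemma:energy-integral}. That non-negativity is proved there via the pointwise minorant condition $\Vm\leq\qm$ (implicit in~\eqref{eqn:metric}) together with Lemma~\ref{lemma:positivity}. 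The whole point of the present corollary is that this pointwise condition is replaced by the weaker form inequality~\eqref{eqn:operator-inequality}, so Lemma~\ref{lemma:energy-integral} is no longer available off the shelf and has to be re-derived.

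The paper's proof does precisely this missing step: it reopens the chain of inequalities for $E(\bar T)$ from Corollary~\ref{corol:energy-inequality}, chooses $\delta+\tilde\delta<\delta_{1}<\bar\delta$, and inserts~\eqref{eqn:operator-inequality} to pass from $\delta_{1}|\nabla u|^{2}+Vu^{2}$ to $(\delta+\tilde\delta)|\nabla u|^{2}-\Vm u^{2}$ and then to $\delta|\nabla u|^{2}-\qm u^{2}\geq 0$ via Lemma~\ref{lemma:positivity}. Only after this do the dependency cones (now with the steeper parameter $\bar\delta$) and the partition-of-unity uniqueness argument go through. Your step~1 already contains the algebraic ingredient for this (the bound $(\tilde\delta+\delta)$), so the fix is not hard: you just need to say explicitly that Lemma~\ref{lemma:energy-integral} holds with $\delta$ replaced by $\tilde\delta+\delta<1$, and that consequently the domain-of-dependence and uniqueness results of Section~\ref{chap:uniqueness} survive with the modified cone slope, rather than invoking Theorem~\ref{thm:gfps} as a black box.
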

Note that a condition similar to~\eqref{eqn:operator-inequality} was presented in Theorem 2.7 in~\cite{Braverman_2002}; 
the Laplacian in~\eqref{eqn:operator-inequality} is nonpositive, so this condition
is weaker than the functional condition in~\eqref{eqn:metric}. 
\begin{proof}
  In Lemma~\ref{lemma:positivity}, we find $0\leq \delta < 1$, so that condition~\eqref{eqn:bilinear-form} is satisfied, i.e., we assume that $\delta$ is homogeneous on $M$.

  The existence of an admissible cover on $M$ implies semiboundedness from below the operator $H_{0}$,
  and for $H_{0}$ to be essentially self-adjoint, owing to Theorem~6.2 in~\cite{berezansky1978self},
  it is sufficient to show that the solution
  of the Cauchy problem~\eqref{eqn:hyperbolic-equation-main} is unique for some interval $[0, \tilde{T})$ with $\tilde{T} > 0$.
  
  The condition~\eqref{eqn:operator-inequality} together with Lemma~\ref{lemma:positivity} implies nonnegativity of the energy integral $E(\Tb)$ in Lemma~\ref{lemma:energy-integral} for both the lateral surface and the base of the cone, with the initial conditions vanishing near the boundary of its base.

  Indeed, for instance, in the third integral of the estimate for $E(\Tb)$ in Corollary~\ref{corol:energy-inequality}, we use corresponding
  notation for $\Tb$ and $\bar{\delta}$, for $\delta + \tilde{\delta} < \delta_{1} < \bar{\delta}$ this inequality
  \begin{align*}
    E(\Tb) & \geq \frac{1}{(1-\bar{\delta})^{1/2}}\left.\int_{\mTa_{p,\tzau}}\left(\frac{\bar{\delta}-\delta_{1}}{1-\delta_{1}}\left(\frac{\partial u}{\partial t}\right)^{2} + \delta_{1}\left|\nabla u\right|^{2} + Vu^2\right)d\mu\right.\\
           & \geq \frac{1}{(1-\bar{\delta})^{1/2}}\left.\int_{\mTa_{p,\tzau}} \left((\delta + \tilde{\delta})\left|\nabla u\right|^{2} - \Vm u^2\right)d\mu\right.\\
           & \geq \frac{1}{(1-\bar{\delta)}^{1/2}}\left.\int_{\mTa_{p,\tzau}} \left(\delta\left|\nabla u\right|^{2} - \qm u^2\right)d\mu\right. \geq 0.
  \end{align*}
  
  Under the conditions of our corollary, the second to the last inequality is valid because vanishing solution $u$
  near the boundary $\partial \bmTa_{p,\tzau}$, and the last inequality is due to Lemma~\ref{lemma:positivity}.

  To establish the uniqueness of the solutions of~\eqref{eqn:hyperbolic-equation-main} with the initial conditions on the entire $M$,
  we apply the partition of unity defined in Lemma~\ref{lemma:semibounded} to the initial conditions of the Cauchy problem; it is clear
  that with these initial conditions, all the solutions are unique in their corresponding dependency cones. Moreover, with the minimal
  overlap $\teps < \tzau$ defined in condition~\ref{cond_c_4} of the admissible covers, the unique
  solution of~\eqref{eqn:hyperbolic-equation-main} can be extended to the time interval $[0, (1-\bar{\delta})^{1/2}\teps)$; here, for $\Tb$, we use the corresponding definition of $T$ in~\eqref{dfn:T-domain-dependence} with $\bar{\delta}$ therein instead of $\delta$. 
\end{proof}
As an alternative to Corollary~\ref{cor:operator-minorant}, the authors of~\cite{Braverman_2002} stated in the remark after Theorem~2.7
that the condition for $\tilde{\delta} < 1$
is essential; we add here that, otherwise, the integral $E(\Tb)$ becomes infinite for $\tilde{\delta} = 1$ and thus for $\bar{\delta} = 1$ too,
and its dependency cone definition is no longer valid.

\section{The Schwarzschild, Reissner-Nordstr{\"o}m, and de Sitter metrics}\label{chap:examples}
In the course of studying the domains of dependence for the solutions of
the local mixed problem~\eqref{eqn:hyperbolic-equation},
we use the inner time metric~\eqref{eqn:metric}, namely, for a regular
point $p\in M$ of the potential, we select the range control neighborhood $\bmTa_{p,\tzau}$
satisfying conditions~\eqref{dfn:positive-neighborhood-upd}, and then, we use
the equation~\eqref{eqn:domain-dependence-regular} to define the domain of dependence
with the time cap $T$ defined in~\eqref{dfn:T-domain-dependence}.

Note also that in the domains of dependence definitions~\eqref{eqn:domain-dependence-regular}
for regular and singular points we use minimizing curves w.r.t.~the metric~\eqref{eqn:metric},
and we notice that the domain of the dependence cone is contained inside of the cone
\begin{equation*}
  \{(t, x): t\geq 0, t + \tau(x, p) = \tzau,\ x \in \bmTa_{p,\tzau}\},
\end{equation*}
which is the light cone of this Lorentzian metric
\begin{equation}\label{eqn:minkowski}
  d\ell^{2} \vcentcolon= -\qm dt^{2} + dl^{2} = -\qm(dt^{2}  - d\tau^{2}) + d\omega^{2}, 
\end{equation}
where the formula for $dl^{2}$ in the second equality is taken from~\eqref{eqn:riemann-tau}.
Note that this metric admits an \quotes{unbounded speed of light} in a neighborhood of singular points of $\qm$.

The light cone in metric~\eqref{eqn:minkowski} consists of light rays
$\{(t, x): t = \pm\tau(p, x)\}$
at the origin $(0, p)$.

While investigating the global finite propagation speed property for solutions of the Cauchy
problem~\eqref{eqn:hyperbolic-equation-main}, we have established that the defined conical domains of dependence
are included in the local past light cones
of the Lorentzian metric~\eqref{eqn:minkowski}, and we wanted to research an inverse problem:
given a well-known Lorentzian metric, investigate its corresponding Schr{\"o}dinger operator and
solutions of the Cauchy problem, their global finite propagation speed property, the range control neighborhood
property for its singular points, black hole neighborhoods, etc.

We will conclude this section with examples of metrics corresponding to the hydrogen atom and its Coulomb potential,
strong singularity cases, etc.

\subsection{Schwarzschild Metric}

The Schwarzschild metric, see \S~5.5 in~\cite{hawking1975large}, is given by
\begin{equation*}
  ds^{2} = -c^{2}\left(1 - \frac{2m}{r}\right)dt^{2} + \left(1 - \frac{2m}{r}\right)^{-1}dr^{2} + r^{2}g_{\Omega},
\end{equation*}
where $c$ is the speed of light, $m$ is the mass of the black hole, $M = \mathbb{R}^{3}\setminus \mathcal{D}(0, 2m)$,
a closed disc with a radius of $2m$ in the Euclidean metric, and the spherically symmetric metric on $M$ is defined as
\begin{equation}\label{dfn:schwartz-metric-space}
  dl^{2} = \left(1 - \frac{2m}{r}\right)^{-1}dr^{2} + r^{2}g_{\Omega}
\end{equation}
with $g_{\Omega}$ being the standard Euclidean metric on a 2-dimensional unit sphere.

So
\begin{equation*}
  \qm = c^{2}\left(1 - \frac{2m}{r}\right).
\end{equation*}
For this example let us study the Cauchy problem~\eqref{eqn:hyperbolic-equation-main}
in the neighborhood of "singularity" with $0 < r - 2m \leq 1$. We have used the quote
signs for the term {\it singularity} for the reasons outlined below.

We have
\begin{equation*}
  \begin{aligned}
    & \tau = \int_{2m}^{r}\qm^{-1/2}dl = \int_{2m}^{r}c^{-1}\left(1 - \frac{2m}{r}\right)^{-1/2}\left(1 - \frac{2m}{r}\right)^{-1/2}dr\\
    & = c^{-1}\int_{2m}^{r}\left(1 - \frac{2m}{r}\right)^{-1}dr =  c^{-1}\int_{2m}^{r}\frac{r}{r-2m}dr\\
    & = c^{-1}\int_{2m}^{r}\left(1 + \frac{2m}{r-2m}\right)dr = \frac{r - 2m}{c} - \frac{2m}{c}\log(r - 2m). 
  \end{aligned}
\end{equation*}
In the second equality, we use the expression~\eqref{dfn:schwartz-metric-space} for the metric $dl$, so
$\tau\to\infty$ when $r\to 2m$, and since $\qm\to 0$ when $r\to 2m$, the boundary $r=2m$ is, in fact, infinity,
and we consider the mixed problem~\eqref{eqn:mixed-problem-type2} in its neighborhood.
Note that in~\S~5.5 of~\cite{hawking1975large}, the expression for $\tau$ is denoted by $r^{*}$, {\it the tortoise coordinates} up to
a constant factor, and the Schwarzschild metric is transformed to the Eddington\text{--}Finkelstein form.

Let us verify RCN conditions~\eqref{dfn:positive-neighborhood-upd} of infinity,
and for condition~\eqref{dfn:positive-neighborhood-upd:b}, we have
\begin{equation}\label{eqn:schwarts-pos-neigh-b}
  \begin{aligned}
    & \qm\tau = c^{2}\left(1 - \frac{2m}{r}\right)\left(\frac{r - 2m}{c} - \frac{2m}{c}\log(r - 2m)\right)\\
    & = c/r\left((r - 2m)^{2} - 2m(r-2m)\log(r - 2m)\right)\\
    & = O(-(r-2m)\log(r - 2m)),
  \end{aligned}
\end{equation}
and the left-hand side of~\eqref{dfn:positive-neighborhood-upd:b} tends to zero as $r\to 2m$.

For condition~\eqref{dfn:positive-neighborhood-upd:a}, $\sigma(x) = 2\sqrt{\pi}r$, and, dropping this constant multiplier,
the expression for the logarithm can be estimated by
\begin{equation*}
  \begin{aligned}
    & \log(w) = \log(\qm^{3/4}\sigma(x)\tau) = \log\left(c^{3/2}\left(1 - \frac{2m}{r}\right)^{3/4}r\left[\frac{r - 2m}{c} - \frac{2m}{c}\log(r - 2m)\right]\right)\\
    & = \log\left(c^{1/2}r^{1/4}(r - 2m)^{3/4}\left[(r - 2m) - 2m\log(r - 2m)\right]\right)\\
    & = \log\left(-2mc^{1/2}r^{1/4}(r - 2m)^{3/4}\log(r - 2m)\left[1 - \frac{r - 2m}{2m}\log^{-1}(r - 2m)\right]\right)\\
    & = \log\left(2mc^{1/2}r^{1/4}\right) + \log\left(-(r - 2m)^{3/4}\log(r - 2m)\right) \\
    & + \log\left(1 - \frac{r - 2m}{2m}\log^{-1}(r - 2m)\right).
  \end{aligned}
\end{equation*}
Note that for the first and third terms of the last equality for $r\to +2m$, we estimate
\begin{equation*}
  \begin{aligned}
    & \left|\frac{\partial \log\left(2mc^{1/2}r^{1/4}\right)}{\partial r}\right| < C_1\text{ and}\\
    & \left|\frac{\partial \log\left(1 - \frac{r - 2m}{2m}\log^{-1}(r - 2m)\right)}{\partial r}\right| < C_2,
  \end{aligned}
\end{equation*}
and the second term can be estimated by
\begin{equation*}
  \begin{aligned}
    & \left|\frac{\partial \log\left(-(r - 2m)^{3/4}\log(r - 2m)\right)}{\partial r}\right| =
    \left|\frac{3/4(r-2m)^{-1/4}\log(r - 2m) + (r-2m)^{-1/4}}{(r - 2m)^{3/4}\log(r - 2m)}\right| \\
    & \leq 3/4(r-2m)^{-1} + (r-2m)^{-1}\left|\log^{-1}(r - 2m)\right|.
  \end{aligned}
\end{equation*}

Furthermore, we use unit vector equality because the metric~\eqref{dfn:schwartz-metric-space}
\begin{equation*}
  \begin{aligned}
    \qm^{-1/2}\frac{\partial}{\partial \tau} = \qm^{1/2}\frac{\partial}{\partial r},
  \end{aligned}
\end{equation*}
and, using the estimate~\eqref{eqn:schwarts-pos-neigh-b} and the estimates for the three terms above,
the left-hand side of~\eqref{dfn:positive-neighborhood-upd:a} can be estimated by
\begin{equation}\label{eqn:schwarts-pos-neigh-a}
  \begin{aligned}
    & \tau\left|\frac{\partial \log(w)}{\partial \tau}\right|
    = \qm^{1/2}\tau\left|\qm^{-1/2}\frac{\partial \log(w)}{\partial \tau}\right|
    = \qm^{1/2}\tau\left|\qm^{1/2}\frac{\partial \log(w)}{\partial r}\right|\\
    & = \qm\tau\left|\frac{\partial \log(w)}{\partial r}\right|
    \leq C_{3}(r - 2m)|\log(r-2m)|\ [C_{1}+ C_{2} + 3/4(r-2m)^{-1} \\
    & + (r-2m)^{-1}\left|\log^{-1}(r - 2m)\right|] \leq C_{4}|\log(r-2m)|.
  \end{aligned}
\end{equation}
The right-hand sides of~\eqref{eqn:schwarts-pos-neigh-b} and~\eqref{eqn:schwarts-pos-neigh-a}
monotonically decrease to zero and increase to infinity, respectively, and their product monotonically
decreases to zero when $r\to 2m$, so if we plot the curve
$$\Qcal = \{(x, y) = \left(\tau\left|\frac{\partial \log(\qm^{3/4}\sigma(x)\tau)}{\partial \tau}\right|, \qm\tau\right),
2m < r < r_{0}\}$$ for some $r_{0}$, then it asymptotically approaches $x-$axis as $r\to 2m$, and, moreover, this curve
will be under the hyperbola $xy = 1/16$, so for any point $(\tilde{x}, \tilde{y})$ of this curve,
we can always find a point $(\hat{x}, \hat{y})$ with $\hat{x}\hat{y} = 1/16$ such that
$\tilde{x} < \hat{x}$ and $\tilde{y} < \hat{y}$, so that in~\eqref{dfn:positive-neighborhood-upd}
we can choose $\Czero = \hat{x}, \epszero = 1/\Czero,$ and $A - \del0=1/2$ with very small $\del0 > 0$.
Thus, we prove that for some $r_{0} > 2m$ and any fixed $2m < r \leq r_{0}$
the neighborhood $\{x\in M: r \leq |x| \leq r_{0}\}$ is an RCN.

In Lemma~\ref{lemma:semibounded}, we require that $\delta$ in
the inequality~\eqref{eqn:bilinear-form} of Lemma~\ref{lemma:positivity} is the same for all $r$, so let us
prove that this condition can be satisfied.

Recall that we define $\delta$ in inequalities~\eqref{eqn:greater-A} and~\eqref{eqn:less-A}, so
if we can find $\del0$ such that $A + \del0 < 1$, $\del0 < A$, 
\begin{equation*}
  1 - 4\left(1/\Czerosquare + \epszerosquare\right)\left(\tau\frac{\partial \log(w)}{\partial \tau}\right)^{2}
  > A + \del0
\end{equation*}
and
\begin{equation*}
  4\left(\Czerosquare + 1/\epszerosquare\right)(\qm\tau)^{2} < A - \del0
\end{equation*}
for all fixed $r$, then we can find that $\delta = (A-\del0)/(A + \del0) < 1$.

Conditions~\eqref{eqn:greater-A} and~\eqref{eqn:less-A} similarly correspond to
$\tau \left|\frac{\partial \log(w)}{\partial \tau}\right| <
\frac{\Czero}{2}\sqrt{\frac{1-A-\del0}{1 + \Czerosquare \epszerosquare}}
$
and
$
\qm\tau <  \frac{\epszero}{2}\sqrt{\frac{A-\del0}{1 + \Czerosquare \epszerosquare}},
$
and, as in Corollary~\ref{cor:positive-positive-neighborhood-upd}, the product of the left-hand sides for
fixed $A$ and $\del0$ with $A+\del0 = 1/2$ can be estimated by
\begin{equation*}
  \begin{aligned}
    & \qm\tau^{2}\left|\frac{\partial \log(w)}{\partial \tau}\right| <
    \frac{\Czero \epszero}{4(1 + \Czerosquare \epszerosquare)}\sqrt{(1-A-\del0)(A-\del0)}\\
    & \leq 1/8\sqrt{(1-A-\del0)(A+\del0 - 2\del0)} = 1/8\sqrt{1/2(1/2 - 2\del0)}\\
    & = 1/16\sqrt{1 - 4\del0},
  \end{aligned}
\end{equation*}
and for any $2m < r \leq r_{0}$ we can find $\Czero$ and $\epszero$ such that the curve $\Qcal$ lies
below the hyperbola $xy=1/16\sqrt{1 - 4\del0}$ for all $\{x: r \leq |x| \leq r_{0}\}$,
so that conditions~\eqref{dfn:positive-neighborhood-upd}
and Lemma~\ref{lemma:semibounded} are satisfied. 

The RCN conditions~\eqref{dfn:positive-neighborhood-upd}
and the uniformity of the estimate~\eqref{eqn:bilinear-form}
imply that the mixed  problem~\eqref{eqn:mixed-problem-type2}
has a unique solution $u\in\mU$ in the cylinder
$\{(t, x): [0, \infty) \times \{x\in M: r_{1} \leq |x| \leq r_{2}\}\}$
for any $2m < r_{1} < r_{2} \leq r_{0}$;
its domain of dependence is defined in~\eqref{eqn:domain-dependence-type2}, and,  
if the initial conditions~\eqref{eqn:mixed-problem-type2:b} and~\eqref{eqn:mixed-problem-type2:c}
are zero outside of the spherical layer $[r_{1}, r_{2}]$,
then its solution will remain zero outside of the corresponding cylinder. 

\subsection{Reissner\text{--}Nordstr{\"o}m Metric}

The Reissner\text{--}Nordstr{\"o}m metric, see \S~5.5 in~\cite{hawking1975large}, is given by
\begin{equation*}
  ds^{2} = -\left(1 - \frac{2m}{r} + \frac{e^{2}}{r^2}\right)dt^{2} + \left(1 - \frac{2m}{r} + \frac{e^{2}}{r^2}\right)^{-1}dr^{2} + r^{2}g_{\Omega},
\end{equation*}
where $m$ is the gravitational mass, $e$ is the electric charge,
and the spherically symmetric metric on $M$ is defined by
\begin{equation}\label{dfn:reissner-nord-metric-space}
  dl^{2} = \left(1 - \frac{2m}{r}+ \frac{e^{2}}{r^2}\right)^{-1}dr^{2} + r^{2}g_{\Omega}
\end{equation}
with $g_{\Omega}$ being the standard Euclidean metric on a 2-dimensional unit sphere.

So,
$$
\qm = \left(1 - \frac{2m}{r} + \frac{e^{2}}{r^2}\right)
$$
and for
$$
r_{\pm} = m \pm \sqrt{m^{2} - e^{2}}
$$
we consider these domains where the potential is positive, and the metric~\eqref{dfn:reissner-nord-metric-space}
is Lorentzian for
\begin{equation}\label{eqn:res-nord-cases}
  \qm=\begin{cases}
    \left(1 - \frac{r_{-}}{r}\right)\left(1 - \frac{r_{+}}{r}\right) & \text{for } 0 < r < r_{-} \text{ or } r > r_{+} \text{ when } m^{2} > e^{2}\\
    \left(1 - \frac{m}{r}\right)^{2} & \text{for } 0 < r < m \text{ or } r > m \text { when } m^{2} = e^{2}\\
    \left(1 - \frac{m}{r}\right)^{2} + \frac{e^{2}-m^{2}}{r^{2}} & \text{for } r > 0 \text{ when } m^{2} < e^{2}.\\
  \end{cases}
\end{equation}
In all three cases in~\eqref{eqn:res-nord-cases} there is a singularity at $r=0$, and the first case
contains two factors similar to the Schwarzschild potentials, the second case has a squared Schwarzschild potential,
and the third case is regular everywhere.

Let us calculate $\tau$ for all these cases, and, as in the Schwarzschild case, we evaluate it in either the singularity neighborhood
or in the neighborhood of Schwarzschild horizons. As in the previous example,
\begin{equation}\label{eqn:reis-nord-tau}
  \tau = \int\frac{1}{1 - \frac{2m}{r} + \frac{e^{2}}{r^{2}}}dr, 
\end{equation}
and it is very close to the variable $r^{*}$
defined in~\S~5.5 of~\cite{hawking1975large}, page 157.
We consider these cases
\begin{equation*}
  \tau=\begin{cases}
    r + \frac{r_{+}^{2}}{r_{+}-r_{-}}\log\left(1- \frac{r}{r_{+}}\right)
    -\frac{r_{-}^{2}}{r_{+}-r_{-}}\log\left(1-\frac{r}{r_{-}}\right) & 0 \leq r < r_{-},\ m^{2} > e^{2}\\
    r + \frac{r_{+}^{2}}{r_{+}-r_{-}}\log(r_{+}-r) - \frac{r_{-}^{2}}{r_{+}-r_{-}}\log(r_{-}-r) & 0 < r < r_{-},\ m^{2} > e^{2}\\
    r + \frac{r_{+}^{2}}{r_{+}-r_{-}}\log(r - r_{+})
    - \frac{r_{-}^{2}}{r_{+}-r_{-}}\log(r-r_{-}) & r > r_{+},\ m^{2} > e^{2}\\
    r + m\log\left(\left(1-\frac{r}{m}\right)^{2}\right) + \frac{mr}{m-r} & 0 \leq r < m,\ m^{2} = e^{2}\\
    r + m\log((r-m)^{2}) + \frac{m^2}{m-r} & 0 < r < m,\ m^{2} = e^{2}\\
    r + m\log((m-r)^{2}) +\frac{m^2}{r-m} & r > m,\ m^{2} = e^{2}\\
    r + m\log\left(\frac{r^{2}}{e^{2}}- \frac{2mr}{e^{2}} + 1\right)\\
    + \frac{2m^2-e^2}{e^{2}-m^{2}}\left[\arctan\left(\frac{m}{\sqrt{e^{2}-m^{2}}}\right) + \arctan\left(\frac{r-m}{\sqrt{e^{2}-m^{2}}}\right)\right] &
    r \geq 0,\ m^{2} < e^{2},
  \end{cases}
\end{equation*}
which correspond to a neighborhood of singularity at $r=0$ - see cases 1, 4, and 7;
the rest of the cases correspond to infinities similar to Schwarzschild infinity
as we approach spheres with radii $r_{-}, r_{+},$ and $m$ from inside, outside,
and both directions, respectively.

\subsubsection{Cases 1, 4, and 7}
Let us consider the case of singularity at $r=0$. The first three terms of the Taylor series expansion for the first case, for instance, yield
\begin{equation*}
  \begin{aligned}
    & \tau = r + \frac{r_{+}^{2}}{r_{+} - r_{-}}\log\left(1- \frac{r}{r_{+}}\right) - \frac{r_{-}^{2}}{r_{+}-r_{-}}\log\left(1-\frac{r}{r_{-}}\right)\\
    & = r + \frac{r_{+}^{2}}{r_{+} - r_{-}}\left[- \frac{r}{r_{+}} + \frac{r^{2}}{2r_{+}^{2}} - \frac{r^{3}}{3r_{+}^{3}}\right]
    - \frac{r_{-}^{2}}{r_{+}-r_{-}}\left[-\frac{r}{r_{-}} + \frac{r^{2}}{2r_{-}^{2}} - \frac{r^{3}}{3r_{-}^{3}}\right] + O(r^{4})\\
    & = \left[1 - \frac{r_{+}}{r_{+} - r_{-}} + \frac{r_{-}}{r_{+} - r_{-}}\right]r + \left[\frac{1}{2(r_{+} - r_{-})} - \frac{1}{2(r_{+} - r_{-})}\right]r^{2}\\
    & + \left[-\frac{1}{3r_{+}(r_{+} - r_{-})} + \frac{1}{3r_{-}(r_{+} - r_{-})}\right]r^{3} + O(r^{4})
    = \frac{r^{3}}{3r_{+}r_{-}} + O(r^4)\\
    & = \frac{r^{3}}{3e^{2}} + O(r^4),
  \end{aligned}
\end{equation*}
and we can also see this from the main part $\frac{r^{2}}{e^{2}}$ of the integrand in~\eqref{eqn:reis-nord-tau}.

Let us check the RCN conditions~\eqref{dfn:positive-neighborhood-upd}. For~\eqref{dfn:positive-neighborhood-upd:b}, we have
\begin{equation*}
\qm\tau  = \left(1 - \frac{2m}{r} + \frac{e^{2}}{r^{2}}\right)\left(\frac{r^{3}}{3e^{2}} + O(r^4)\right) = \frac{r}{3} + O(r^{2}),
\end{equation*}
and it can be made arbitrarily small for small $r$.

For condition~\eqref{dfn:positive-neighborhood-upd:a}, as in the previous example, $\sigma=2\sqrt{\pi}r$, and we drop constant factors
in the estimate below:
\begin{equation*}
  \begin{aligned}
    & \log(w) = \log(\qm^{3/4}\sigma\tau) = \log\left[\left(1 - \frac{2m}{r} + \frac{e^{2}}{r^2}\right)^{3/4}r\left(\frac{r^{3}}{3e^{2}} + O(r^4)\right)\right]\\
    & =\log\left(\frac{r^{5/2}}{3e^{1/2}} + O(r^{7/2})\right) = 5/2\log(r) + C + \log(1 + O(r)),
  \end{aligned}
\end{equation*}
so that $\frac{\partial \log(w)}{\partial r} = 5/2r^{-1} + O(1)$, and,
following the same four identities in~\eqref{eqn:schwarts-pos-neigh-a}, we obtain
\begin{equation*}
  \begin{aligned}
    & \tau\left|\frac{\partial \log(w)}{\partial \tau}\right|
    = \qm\tau\left|\frac{\partial \log(w)}{\partial r}\right|\\
    & = \left(\frac{r}{3} + O(r^{2})\right) \left(5/2r^{-1} + O(1)\right) = O(1),\\
  \end{aligned}
\end{equation*}
and for small $r > 0$ the RCN conditions~\eqref{dfn:positive-neighborhood-upd} are
satisfied, and in cases 1, 4, and 7 the solutions of the Cauchy problem~\eqref{eqn:hyperbolic-equation}
exist and are unique for all $t$.

\subsubsection{Cases 2 and 3}
Let us turn to cases 2 and 3; the investigation of the neighborhood of infinities is very similar to that
of Schwarzschild infinity, so we are going to be brief and, at the same time, provide necessary detail.
Both cases are symmetric, so it is sufficient to consider case 2, for instance.

For the RCN condition~\eqref{dfn:positive-neighborhood-upd:b},
we have
\begin{equation*}
  \begin{aligned}
    & \qm\tau = \frac{(r_{-}-r)(r_{+}-r)}{r^{2}} \left\{r + \frac{r_{+}^{2}}{r_{+}-r_{-}}\log(r_{+}-r)-\frac{r_{-}^{2}}{r_{+}-r_{-}}\log(r_{-}-r)\right\} \\
    & = O\left((r_{-}-r)\log(r_{-}-r)\right),
  \end{aligned}
\end{equation*}
and is similar to the estimate~\eqref{eqn:schwarts-pos-neigh-b} for the Schwarzschild metric. 
For condition~\eqref{dfn:positive-neighborhood-upd:a}, using the estimate~\eqref{eqn:schwarts-pos-neigh-b},
we calculate
\begin{equation*}
  \begin{aligned}
    & \tau\left|\frac{\partial \log(w)}{\partial \tau}\right|
    = \qm\tau\left|\frac{\partial \log(w)}{\partial r}\right|\\
    & = O\left((r_{-}-r)|\log(r_{-}-r)|\right) O\left((r_{-}-r)^{-1}\right) = O(|\log(r_{-}-r)|),
  \end{aligned}
\end{equation*}
and, as in the Schwarzschild case, the conditions~\eqref{dfn:positive-neighborhood-upd} are satisfied,
and we can find small values $\epsilon > 0$ so that we can define the same $\delta > 0$ in
the inequality~\eqref{eqn:bilinear-form} of Lemma~\ref{lemma:positivity} for $r \in [r_{-}- \epsilon, r_{-})$.
Therefore, for both cases, the neighborhoods of $r=r_{-}$ and $r=r_{+}$ are RCNs of infinity when we approach them from inside and outside,
respectively. 

\subsubsection{Cases 5 and 6}
Both cases are similar in how we perform estimates, so we concentrate on the case 5. We have
\begin{equation*}
  \qm= \frac{(r-m)^{2}}{r^2},
\end{equation*}
for condition~\eqref{dfn:positive-neighborhood-upd:b},
we estimate
\begin{equation*}
\qm\tau  = \frac{(r-m)^{2}}{r^2} \left(r + m\log((r-m)^{2}) + \frac{m^2}{m-r}\right) = O(m-r),
\end{equation*}
and for condition~\eqref{dfn:positive-neighborhood-upd:a}, we again estimate
\begin{equation*}
  \begin{aligned}
    & \log(w) = \log\left(\qm^{3/4}\sigma\tau\right) = \log\left(\frac{(m-r)^{3/2}}{r^{3/2}} r \left(r  + m\log((m-r)^{2}) + \frac{m^2}{m-r}\right)\right)\\
    & = \log\left(\frac{m^2(m-r)^{1/2}}{r^{1/2}} \left(1 + 2/m(m-r)\log(m-r) - \frac{r(m - r)}{m^2}\right)\right)\\
    & = 1/2 \log(m-r) - 1/2\log(r) + \log\left(1 + 2/m(m-r)\log(m-r) - \frac{r(m - r)}{m^2}\right)+\dots,
  \end{aligned}
\end{equation*}
and its derivative is bounded by 
$
  \left|\frac{\partial \log(w)}{\partial r}\right| = O\left((m-r)^{-1}\right),
$
so that the left-hand side of condition~\eqref{dfn:positive-neighborhood-upd:a} is estimated by
\begin{equation*}
  \tau\left|\frac{\partial \log(w)}{\partial \tau}\right|
  = \qm\tau\left|\frac{\partial \log(w)}{\partial r}\right|
  =  O(m-r) O((m-r)^{-1}) = O(1),
\end{equation*}
and the conditions~\eqref{dfn:positive-neighborhood-upd} are satisfied
in some neighborhoods of infinity at $r=m$.

\subsubsection{Case Summary}
In all the cases we observed that the manifold $M$ is complete w.r.t.~the metric~\eqref{eqn:metric}, and there are RCNs
of both the singularity at $r=0$ and the infinities in other cases, 
such that the Cauchy problem~\eqref{eqn:hyperbolic-equation-main} has a unique solution defined everywhere on $M$ for all $t > 0$.

Note that the potential $\qm\sim \frac{e^{2}}{r^{2}}$ at $r=0$, and, compared with Example~\ref{ex:hydrogen} for the Euclidean space,
this potential admits RCNs for any $e$; this is due to the form of
the metric~\eqref{dfn:reissner-nord-metric-space}, which is very different from the Euclidean metric at the origin.

\subsection{De Sitter Metric}
The De Sitter metric is defined as
\begin{equation*}
  ds^{2} = -\left(1 - \frac{r^{2}}{\ell^2}\right)dt^{2} + \left(1 - \frac{r^{2}}{\ell^2}\right)^{-1}dr^{2} + r^{2}g_{\Omega}, 0 \leq r < \ell
\end{equation*}
where $\ell$ is the cosmological horizon, and the metric on $M = \mathbb{R}^{3}$ is defined by
\begin{equation*}
  dl^{2} = \left(1 - \frac{r^{2}}{\ell^2}\right)^{-1}dr^{2} + r^{2}g_{\Omega}, 0 \leq r < \ell.
\end{equation*}

\noindent Thus, $\qm = 1 - \frac{r^{2}}{\ell^2}$, and we have 
\begin{equation*}
  \tau = \int_{r}^{\ell} \qm^{-1}dr = \frac{\ell}{2}\log\frac{1+\frac{r}{\ell}}{1-\frac{r}{\ell}},
\end{equation*}
and $\tau \to \infty$ when $r \to \ell$, and the horizon $r = \ell$ is, in fact, infinity.

\noindent For condition~\eqref{dfn:positive-neighborhood-upd:b} we calculate
\begin{equation*}
  \qm\tau = \left(1 - \frac{r^{2}}{\ell^2}\right)\frac{\ell}{2}\log\frac{1+\frac{r}{\ell}}{1-\frac{r}{\ell}}
  = O\left(-\left(1 - \frac{r}{\ell}\right)\log\left(1 - \frac{r}{\ell}\right)\right),
\end{equation*}
and $\qm\tau \to 0$ when $r \to \ell$.

\noindent For condition~\eqref{dfn:positive-neighborhood-upd:a}, as in previous chapters, we calculate
\begin{equation*}
  w = \qm^{3/4}\sigma\tau = C_{1}\left(1 - \frac{r^{2}}{\ell^2}\right)^{3/4}\log\frac{1+\frac{r}{\ell}}{1-\frac{r}{\ell}},
\end{equation*}
and the corresponding derivative is estimated by 
\begin{equation*}
  \frac{\partial\log (w)}{\partial r} = O\left(1 - \frac{r}{\ell}\right)^{-1}.
\end{equation*}
Therefore, for condition~\eqref{dfn:positive-neighborhood-upd:a}, we evaluate
\begin{equation*}
  \begin{aligned}
    & \tau \left| \frac{\partial \log(w)}{\partial \tau} \right| = \qm\tau \left| \frac{\partial \log(w)}{\partial r} \right|\\
    & = O\left(-\left(1 - \frac{r}{\ell}\right)\log\left(1 - \frac{r}{\ell}\right)\right) O\left(\left(1 - \frac{r}{\ell}\right)^{-1}\right) \\
    & = O\left(-\log\left(1 - \frac{r}{\ell}\right)\right),
  \end{aligned}
\end{equation*}
and, similar to the case of the Schwarzschild metric, the conditions~\eqref{dfn:positive-neighborhood-upd}
are satisfied in the neighborhood of infinity, so the horizon $\ell$ is an RCN of infinity.

In the next two examples we define Lorentzian metrics from the Cauchy problems
for the wave equations with the Schr{\"o}dinger operators we considered
before. 
\subsection{Minkowski Metric}
The Minkowski metric on $\mathbb{R}\times \RRn, n \geq 3$ is defined by
\begin{equation}\label{dfn:minkowski}
  ds^{2} = -c^{2}dt^{2} + dr^{2} + r^{2}g_{\Omega},
\end{equation}
where $c$ is the speed of light.
Therefore, $\qm=c^{2}$ and $\tau=r/c$; we have already shown in Example~\ref{ex:minkowski} that a small neighborhood of the origin is RCN, and the light cones for~\eqref{dfn:minkowski} are defined by $t=\pm r/c$.

\subsection{Hydrogen Atom}
For the hydrogen atom we define a corresponding Lorentzian metric in $\mathbb{R}\times \mathbb{R}^{3}$
with the Coulomb potential
\begin{equation*}
  ds^{2} = -\frac{dt^{2}}{r} + dr^{2} + r^{2}g_{\Omega}.
\end{equation*}

We estimate $$\tau = \int_{0}^{r}\qm^{-1/2}dr = \int_{0}^{r}r^{1/2}dr = 2/3r^{3/2},$$ and the future and past light cones are defined by $t=\pm 2/3r^{3/2}$.

We have the following: 
\begin{proposition}\label{prop:uncertainty-principle}
  \textbf{Time\text{--}Energy Uncertainty Principle Estimate for a Hydrogen Atom.}
  The following inequality holds:
  \begin{equation*}
    \Delta E \Delta t \geq 7/8,
  \end{equation*}
  where $\Delta t$ is an estimate of the time needed to achieve an energy level transition $\Delta E$.
\end{proposition}
\begin{proof}
  Consider two energy levels $n_{1}$ and $n_{2}$ and suppose that $n_{2} > n_{1}$
  and that the electron transitions from state $n_{1}$ to $n_{2}$.
  Therefore, the energy difference between these states is
  $\Delta E := E_{n_{2}} - E_{n_{1}} = -1/(4n_{2}^{2}) + 1/(4n_{1}^{2}) = 1/4(1/n_{1}^{2} - 1/n_{2}^{2})$. Note that $\Delta E > 0$.

  To estimate $\Delta t$, according to the Bohr\text{--}Rutherford model,
  an electron of the hydrogen atom in state $n$ rotates
  around its atom in a circular motion with an orbit of radius $r_{n} = a_{0}n^{2}$, where
  $a_{0}$ is the constant Bohr radius; we normalize it to 1. A trajectory line of the electron is inside the light cone above, so we can estimate
  $\Delta t \geq 2/3(r_{n_{2}}^{3/2} - r_{n_{1}}^{3/2}) = 2/3\left(n_{2}^{3} - n_{1}^{3}\right)$; hence, we obtain
  \begin{equation*}
    \begin{aligned}
      & \Delta E \Delta t \geq 1/6\left(1/n_{1}^{2} - 1/n_{2}^{2}\right)\left(n_{2}^{3} - n_{1}^{3}\right)\\
      & = 1/6 \left(1/n_{1} - 1/n_{2}\right)\left(1/n_{1} + 1/n_{2}\right)(n_{2} - n_{1})\left(n_{1}^{2} + n_{1}n_{2} + n_{2}^{2}\right) \\
      & = 1/6 \left(n_{2}/n_{1} + n_{1}/n_{2} - 2\right) \left(1/n_{1} + 1/n_{2}\right)\left(n_{1}^{2} + n_{1}n_{2} + n_{2}^{2}\right)\\
      & \geq 1/6*1/2*3/2(1 + 2 + 4) = 7/8.
    \end{aligned}
  \end{equation*}
  Here, the minimum is attained for the ground state $n_{1}= 1$ and for the next excited state $n_{2}= 2$.

  \noindent Note that the expression $\Delta E\Delta t$ will remain positive when the transition
  is from a higher state $n_{1}$ to a lower state $n_{2}$, i.e., when $n_{2} < n_{1}$ above. 
\end{proof}

\subsection{Potential in Example~\ref{ex:hydrogen}}
From Example~\ref{ex:hydrogen} with $\alpha=1$ and $\beta^{2}=1/(4n^{2})$, let us
define corresponding Lorentzian metric as $\mathbb{R}\times \RRn, n=1,\ 2,\ ...$ 
\begin{equation*}
  ds^{2} = -\frac{1}{4n^{2}r^{2}}dt^{2} + dr^{2} + r^{2}g_{\Omega}.
\end{equation*}
We treat here all $\RRn$ as naturally embedded into $\mathbb{R}^{\infty}$ with the map
$i:\RRn \hookrightarrow \mathbb{R}^{\infty}$ by
$i(x_1,\ x_2,\ \dots,\ x_{n}) = (x_1,\ x_2,\ \dots,\ x_{n}, 0,\ \dots)$, so the distance from the origin
$r\vcentcolon=\left(\sum_{i=1}^{\infty}|x_{i}|^{2}\right)^{1/2}$ is well defined.

We choose $\beta$ so that a small neighborhood of origin is the RCN,
and we estimate
$$\tau = \int_{0}^{r}\qm^{-1/2}dr = \int_{0}^{r}2nrdr = nr^{2},$$
thus, the
future and past light cones are paraboloids defined by $t=\pm nr^{2}$.

When we consider conditions on the future light cones
in $\mathbb{R}^{\infty}$
\begin{equation*}
  \sum_{i}^{n}x_{i}^{2} \leq t/n\ \text{for all } n > 0,
\end{equation*}
then, necessarily, we must have $t\to\infty$ when $n\to\infty$; otherwise, when $\overline{\lim}_{n\to\infty}t < \infty$,
then $x_{i}\to 0$ for all $i$, and an orbit of a particle would collapse on the origin.
If we assume that all critical orbits are bounded and nonzero when $n\to\infty$, then all legitimate orbits
belong to the Hilbert space $\ell^{2}\subset \mathbb{R}^{\infty}$.

\section{Addendum. Strongly Singular Potential \\
  $-\beta^{2}/|x|^{2}$ in $\RRn\setminus\{0\}, n\geq 5$.}
As we noted at the end of Example~\ref{ex:hydrogen}, the larger estimate for the parameter $\beta$
given in Example D.1 in~\cite{Braverman_2002} has to do with the fact that $M=\RRn\setminus\{0\}$ has a boundary
at the origin, it is not complete in either Euclidean or $\tau$ metrics,
so even for the Laplacian to be essentially self-adjoint, we must have the condition $n \geq 4$.
In this section we wanted to show that the essential self-adjointness conditions are intimately related
to the inner time metric~\eqref{eqn:metric}.

A special case of Theorem~1 in~\cite{Bruse1998} provides the following estimate:
\begin{equation}\label{eqn:grad-div-estimate}
  \int_{\RRn}|\nabla \phi(x)|^{2}dx \geq \int_{\RRn}(\text{div}\ X - |X|^{2}) \phi^{2}(x)dx
\end{equation}
for each real-valued $\phi \in C^\infty_{0}(\RRn\setminus\{0\})$ and any Lipschitz vector field $X$.
Define metric
\begin{equation}\label{eqn:add-tau-metric}
  \tau(0, x) = \int_{0}^{|x|}q^{-1/2}(x)dr = \frac{r^{2}}{2\beta},
\end{equation}
and for the vector field
\begin{equation}\label{eqn:add-vf}
  X = \frac{\partial}{\partial \tau},
\end{equation}
we estimate
\begin{equation}\label{eqn:square-X}
  |X|^{2} = \qm(x).
\end{equation}
This is due to the definition of the metric~\eqref{eqn:riemann-tau}, and to estimate $\text{div}X$, we first rewrite
\begin{equation*}
  X = \frac{\partial}{\partial \tau} = \frac{\partial r}{\partial \tau} \frac{\partial}{\partial r}  =
  \frac{1}{\frac{\partial \tau}{\partial r}}\frac{\partial}{\partial r} = \frac{\beta}{r} \frac{\partial}{\partial r}, 
\end{equation*}
and, using spherical coordinates in $\RRn$, we estimate
\begin{equation}\label{eqn:divergence-sing}
  \text{div} X = \frac{1}{r^{n-1}}\frac{\partial}{\partial r}(\beta r^{-1}r^{n-1}) = \beta(n-2)r^{-2},
\end{equation}
so that the expression on the right-hand side of~\eqref{eqn:grad-div-estimate} can be estimated by
$\text{div}\ X - |X|^{2} = \beta(n-2)r^{-2} - \qm(x) = (\beta(n-2) - \beta^{2})r^{-2}$, and for this expression to be $\geq \qm(x)$, we must have $\beta^{2} \leq \frac{(n-2)^{2}}{4}$,
and the operator~\eqref{eqn:schrodinger} is nonnegative on $C^\infty_{0}(\RRn\setminus\{0\})$ for such $\beta$.

Now let us turn to the essential self-adjointness conditions for the operator~\eqref{eqn:schrodinger}.
We rely on the following Theorem 3~\cite{Bruse1998}.
\begin{theorem}[Correcting Potentials]\label{thm:correct-potential}
  Suppose that
  \begin{equation}\label{eqn:correct-potential}
    -\qm(x) \geq |\nabla\eta|^{2} + |X|^{2} - \text{div} X - C_{1}
  \end{equation}
  for some $\eta\in C^{2}(M)$ such that $\eta \to \infty $ when $|x| \to 0$,
  a Lipschitz vector field $X$, and some $C_{1}\geq 0$.
  Additionally, assume also that the function $\eta$ satisfies the inequality
  \begin{equation}\label{eqn:eta-inequality}
    |\nabla\eta(x)|^{2} \leq C_{2}e^{2\eta}, \text{for a.e. } x\in M.
  \end{equation}
  The operator~\eqref{eqn:schrodinger} with $D(H)=C^{\infty}_{0}(M)$ is essentially self-adjoint.
\end{theorem}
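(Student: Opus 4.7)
The plan is to combine the Wienholtz--Povzner criterion for essential self-adjointness with an Agmon-style test-function argument, taking advantage of the Lipschitz function $\delta\vcentcolon=e^{-\eta}$ provided by~\eqref{eqn:eta-inequality}.

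First, combining~\eqref{eqn:grad-div-estimate} with the hypothesis~\eqref{eqn:correct-potential} shows that $H$ on $D(H)=C^{\infty}_{0}(M)$ is symmetric and bounded below by $-C_{1}$, so by Wienholtz' theorem essential self-adjointness reduces to showing that, for some $\lambda>C_{1}$, every $u\in L^{2}(M)$ satisfying $(H^{*}+\lambda)u=0$ is zero. Fix such $u$; interior elliptic regularity gives $u\in W^{1,2}_{\loc}(M)$ and the equation $-\Delta u+Vu+\lambda u=0$ holds a.e.\ on $M$. The decisive observation is that~\eqref{eqn:eta-inequality} is equivalent to $|\nabla\delta|^{2}\leq C_{2}$, so $\delta$ is globally Lipschitz and vanishes exactly at the singular set. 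Define the Lipschitz cutoff
\begin{equation*}
\phi_{n}(x)\vcentcolon=\min\!\bigl(n\,e^{-\eta(x)},\,1\bigr),\qquad n\in\mathbb{N},
\end{equation*}
which satisfies $\phi_{n}\to 1$ pointwise on $M$ and, by direct verification on the regions $\{\eta<\log n\}$ (where $\phi_{n}\equiv 1$) and $\{\eta>\log n\}$ (where $\phi_{n}=ne^{-\eta}$), the pointwise bound $|\nabla\phi_{n}|^{2}\leq \phi_{n}^{2}|\nabla\eta|^{2}$ a.e.\ on $M$.

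Second, complement $\phi_{n}$ by a standard Lipschitz cutoff $\psi_{R}$ at infinity of $M$ with $|\nabla\psi_{R}|\leq C/R$, test the equation against $(\phi_{n}\psi_{R})^{2}u$, and integrate by parts to obtain the energy identity
\begin{equation*}
\int_{M}|\nabla(\phi_{n}\psi_{R}u)|^{2}d\mu+\int_{M}V(\phi_{n}\psi_{R}u)^{2}d\mu+\lambda\int_{M}(\phi_{n}\psi_{R}u)^{2}d\mu=\int_{M}|\nabla(\phi_{n}\psi_{R})|^{2}u^{2}d\mu.
\end{equation*}
The hypothesis $\Vm\leq\qm$ combined with~\eqref{eqn:correct-potential} yields $V+C_{1}\geq|\nabla\eta|^{2}+|X|^{2}-\text{div}\,X$, and~\eqref{eqn:grad-div-estimate} applied to $\phi_{n}\psi_{R}u$ gives $\int|\nabla(\phi_{n}\psi_{R}u)|^{2}\geq\int(\text{div}\,X-|X|^{2})(\phi_{n}\psi_{R}u)^{2}$. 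Substituting both lower bounds into the identity causes the $\pm(|X|^{2}-\text{div}\,X)$ terms to cancel exactly, leaving
\begin{equation*}
\int_{M}|\nabla\eta|^{2}(\phi_{n}\psi_{R}u)^{2}d\mu+(\lambda-C_{1})\int_{M}(\phi_{n}\psi_{R}u)^{2}d\mu\leq\int_{M}|\nabla(\phi_{n}\psi_{R})|^{2}u^{2}d\mu.
\end{equation*}
Letting $R\to\infty$ (using $u\in L^{2}$ to kill the $\psi_{R}$-gradient contribution $\leq CR^{-2}\|u\|^{2}_{L^{2}}$) and then substituting $|\nabla\phi_{n}|^{2}\leq\phi_{n}^{2}|\nabla\eta|^{2}$ causes the right-hand side to cancel the first term on the left. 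What remains is $(\lambda-C_{1})\int_{M}\phi_{n}^{2}u^{2}d\mu\leq 0$, which, since $\lambda>C_{1}$, forces $\phi_{n}u=0$ a.e.; passing $n\to\infty$ with monotone convergence gives $u=0$.

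The main obstacle is the rigorous integration by parts with $\phi_{n}$, which is Lipschitz but only strictly positive on $M$ rather than compactly supported away from the singular set. The resolution is to approximate $\phi_{n}$ from below by $(\phi_{n}-1/k)_{+}$ (equivalently, to restrict the test function to the exterior of a shrinking neighborhood of the singular set) and pass to the limit $k\to\infty$, invoking the exponential decay $\phi_{n}=ne^{-\eta}\to 0$ at the singularity together with $u\in L^{2}$ to show that all boundary contributions vanish, and extending~\eqref{eqn:grad-div-estimate} from $C^{\infty}_{0}$ to $W^{1,2}_{0}$ test functions by density. Once this regularization is in place, the cancellation chain above delivers the conclusion essentially automatically; the cleverness of the argument is concentrated in the choice of $\phi_{n}$, whose defining inequality $|\nabla\phi_{n}|^{2}\leq\phi_{n}^{2}|\nabla\eta|^{2}$ is precisely what is needed to absorb the cutoff error against the corrector $|\nabla\eta|^{2}$.
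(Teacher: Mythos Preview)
The paper does not supply its own proof of this theorem: it is quoted as a simplified special case of Theorem~3 in~\cite{Bruse1998} and then applied to derive the subsequent corollary. There is therefore no in-paper argument to compare your proposal against.

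That said, your outline is the standard and correct route to results of this Brusentsev/Kalf--Walter type. The two load-bearing ideas --- the Agmon cutoff $\phi_{n}=\min(ne^{-\eta},1)$ with the exact pointwise bound $|\nabla\phi_{n}|^{2}\leq\phi_{n}^{2}|\nabla\eta|^{2}$, and the cancellation of the $|X|^{2}-\text{div}\,X$ contribution via~\eqref{eqn:grad-div-estimate} --- are precisely what the hypotheses~\eqref{eqn:correct-potential} and~\eqref{eqn:eta-inequality} are engineered to enable, and your energy-identity bookkeeping is correct. Two small points are worth tightening. First, to avoid a spurious factor of~$2$ when expanding $|\nabla(\phi_{n}\psi_{R})|^{2}$, you should observe (or arrange) that for $R$ large the supports of $\nabla\phi_{n}$ and $\nabla\psi_{R}$ are disjoint, since $\eta\to\infty$ only at the singular set; then the cross term vanishes and the estimate goes through without loss. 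Second, the existence of a family $\psi_{R}$ with $|\nabla\psi_{R}|\leq C/R$ is not free on a general manifold; in the paper's intended setting $M=\RRn\setminus\{0\}$ it is immediate, but in full generality one needs geodesic completeness of $M$ at infinity (or an equivalent exhaustion hypothesis), which the cited theorem in~\cite{Bruse1998} includes among its standing assumptions.
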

Note that Theorem~\ref{thm:correct-potential} is a much simpler version of Theorem~3 in~\cite{Bruse1998}, where the author considers more general elliptic operators, and the domain $M$ may have multiple disjoint regular boundaries of any dimension less than $n$, etc.

We are ready to formulate the following:
\begin{corollary}
  The Laplace operator is essentially self-adjoint for $n\geq 4$.
  The Schr{\"o}dinger operator~\eqref{eqn:schrodinger}
  with $\widetilde{\Vm} =\widetilde{\qm}\vcentcolon= \alpha |x|^{-2}, \alpha \geq 0$ is essentially
  self-adjoint when $\alpha \leq \left(\frac{n-2}{2}\right)^{2} - 1$.
\end{corollary}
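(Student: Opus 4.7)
The plan is to apply Theorem~\ref{thm:correct-potential} with the explicit pair $\eta(x) = -\log|x|$ and a vector field of the form $X = (\lambda/r)\,\partial/\partial r$ in spherical coordinates, where $\lambda > 0$ is a free parameter not tied to the potential coefficient $\alpha$. This $X$ is exactly $\partial/\partial\tau$ for the auxiliary inner time $\tau = r^{2}/(2\lambda)$, paralleling~\eqref{eqn:add-tau-metric} and~\eqref{eqn:add-vf}. First I would verify condition~\eqref{eqn:eta-inequality}: since $|\nabla\eta|^{2} = 1/r^{2} = e^{2\eta}$, equality holds and $C_{2} = 1$ suffices. The quantities $|X|^{2} = \lambda^{2}/r^{2}$ and $\text{div}\, X = \lambda(n-2)/r^{2}$ are obtained exactly as in~\eqref{eqn:square-X} and~\eqref{eqn:divergence-sing}, with $\beta$ replaced by the free $\lambda$.

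For the Schr{\"o}dinger case $\widetilde{\qm} = \alpha/r^{2}$, substituting into~\eqref{eqn:correct-potential} and multiplying through by $r^{2}$ turns the inequality into
$$\lambda(n-2) - \lambda^{2} - 1 + C_{1}r^{2} \;\geq\; \alpha \quad \text{for all } r>0.$$
Letting $r\to 0^{+}$ forces the $r$-free constraint $\lambda(n-2) - \lambda^{2} - 1 \geq \alpha$, and then $C_{1}=0$ automatically covers all larger $r$. Maximizing the quadratic $\lambda(n-2) - \lambda^{2} - 1$ in $\lambda$ attains its peak at $\lambda = (n-2)/2$, giving the value $((n-2)/2)^{2} - 1$, so the stated threshold $\alpha \leq ((n-2)/2)^{2} - 1$ is exactly what the method delivers.

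For the Laplace operator I would repeat the argument with $\qm \equiv 0$ and the same $\eta$ and $X$. The required inequality then collapses to $\lambda^{2} - \lambda(n-2) + 1 \leq 0$, a quadratic in $\lambda$ whose discriminant $(n-2)^{2} - 4$ is nonnegative precisely when $n \geq 4$. Any $\lambda$ in the resulting interval between the two roots (for instance $\lambda = (n-2)/2$) satisfies the hypotheses of Theorem~\ref{thm:correct-potential} with $C_{1} = 0$, so $-\Delta$ is essentially self-adjoint on $C^{\infty}_{0}(\RRn\setminus\{0\})$ for $n\geq 4$.

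The main obstacle I anticipate is not the algebra but the regularity bookkeeping: the field $X = (\lambda/r)\,\partial/\partial r$ blows up at the removed origin, so one must check that it qualifies as a \emph{Lipschitz vector field} in the sense required by Theorem~3 of~\cite{Bruse1998} on the punctured domain $M = \RRn\setminus\{0\}$ rather than on all of $\RRn$. The hypothesis $\eta\to\infty$ at the boundary $\{0\}$ is precisely what enables a cutoff/limiting argument to absorb the formal boundary contribution produced by integration by parts in the derivation of~\eqref{eqn:grad-div-estimate}, so that the estimate survives with the singular coefficient. Once those regularity checks are cleared, the single optimization in $\lambda$ sketched above delivers both claimed sharp bounds simultaneously.
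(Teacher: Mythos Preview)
Your proposal is correct and follows essentially the same route as the paper: the paper also applies Theorem~\ref{thm:correct-potential} with the radial vector field $X=\partial/\partial\tau=(\beta/r)\,\partial/\partial r$ (your $\lambda$ is their $\beta$) and with $\eta=-\tfrac{1}{2}\log\tau$, which differs from your $\eta=-\log r$ only by an additive constant and hence yields the same gradient and the same quadratic $\beta^{2}-(n-2)\beta+1+\alpha\le 0$ after clearing the $r^{-2}$ factor. The optimization over the free parameter and the resulting thresholds are identical; your direct choice $\eta=-\log r$ is a slight streamlining of the paper's computation, and your remark about checking that $X$ is Lipschitz on $M=\RRn\setminus\{0\}$ (rather than on $\RRn$) is exactly the regularity caveat the cited Theorem~3 of~\cite{Bruse1998} is set up to accommodate.
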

We prove here the sufficiency of these conditions, but, as noted in Corollary~3 and Remark~2 of~\cite{Bruse1998},
these conditions are also necessary.
\begin{proof}
  Our proof is somewhat different from the one given in~\cite{Bruse1998}, as we are going to utilize a correcting potential
  $\Vm=\qm=\beta^{2}|x|^{-2}$, a multiple of the original one, 
  then define for it the metric $\tau$ in~\eqref{eqn:add-tau-metric}, then vector field $X$ in~\eqref{eqn:add-vf},
  and make use of the expressions~\eqref{eqn:square-X} and~\eqref{eqn:divergence-sing}.

  For inequality~\eqref{eqn:eta-inequality}, we define $\eta = -1/2\log(\tau)$, and
  $e^{-2\eta}|\nabla\eta(x)|^{2} = |\nabla e^{-\eta}|^{2} = |\nabla \tau^{1/2}|^{2} = |\frac{\nabla r}{\sqrt{2\beta}}|^{2} = \frac{1}{2\beta}$,
  so in~\eqref{eqn:eta-inequality}, the constant $C_{2} = \frac{1}{2\beta}$.

  By multiplying each term of~\eqref{eqn:correct-potential} by $e^{-2\eta}$, we obtain
  $e^{-2\eta}\widetilde{\qm} = \alpha \tau r^{-2} = \frac{\alpha}{2\beta}$,
  $e^{-2\eta}|X|^{2} = e^{-2\eta}\qm = \beta^{2} \tau r^{-2} = \beta/2$, and
  $e^{-2\eta}\text{div}X = \beta(n-2) r^{-2}\tau = (n-2)/2$;
  thus, condition~\eqref{eqn:correct-potential} can be rewritten in this form
  \begin{equation*}
    \frac{1}{2\beta}\left[\beta^{2} - (n-2)\beta + 1 + \alpha\right] - C_{1}e^{-2\eta} \leq 0, x\in M,
  \end{equation*}
  and, since $C_{1}e^{-2\eta}\to 0$ when $|x|\to 0$, this condition is equivalent to
  \begin{equation}\label{eqn:correct-potential-final}
    \beta^{2} - (n-2)\beta + 1 + \alpha \leq 0.
  \end{equation}
  For the Laplacian operator with $\alpha = 0$, this condition can be satisfied only when $n \geq 4$.
  For any other value $\alpha > 0$, the minimal value
  of the left-hand side of~\eqref{eqn:correct-potential-final} can be set to
  $1 + \alpha - \left(\frac{n-2}{2}\right)^{2}$, so to satisfy~\eqref{eqn:correct-potential-final}, we must have $\alpha \leq \left(\frac{n-2}{2}\right)^{2} - 1$.
\end{proof}

\section{Appendix. Kato and Stummel Classes.}
In this Appendix we provide a brief survey of the Kato and Stummel classes of the potentials,
their definitions and most important properties,
and it is essentially a shorter version of Appendix~C in~\cite{Braverman_2002}.
\subsection{Stummel Classes}
These classes of potentials were first introduced in~\cite{Stummel1956/57}, and
more details about them can be found in~\cite{cycon1987schrodinger}, \S~1.2 and
in~\cite{schechter1986spectra}, Chapts.~5 and~9.

A real-valued measured potential $f$ belongs to the uniform Stummel class $S_{n}(\RRn)$ iff the following conditions hold:
\begin{equation}\label{eq:stummel-class}
  \begin{aligned}
    & \lim_{r\downarrow 0}\left[\sup_{x}\int_{|x-y|\leq r}|x-y|^{4-n}|f(y)|^{2}dy\right] = 0 && \text{for } n\geq 5\\
    & \lim_{r\downarrow 0}\left[\sup_{x}\int_{|x-y|\leq r}\log(|x-y|^{-1})|f(y)|^{2}dy\right] = 0 && \text{for } n = 4\\
    & \sup_{x}\int_{|x-y|\leq r_{0}}|f(y)|^{2}dy < \infty && \text{for } n \leq 3,
  \end{aligned}
\end{equation}
where $r_{0}$ is an arbitrary fixed value.
If $\phi: \RRn \to \RRn$ is a diffeomorphism that is linear at infinity, then it is clear
that $\phi^{*}(f) = f\circ \phi$ also belongs to $S_{n}(\RRn)$.
In this way, we can always define a local Stummel class on $S_{n,\loc}(M)$ as a set of functions
satisfying~\eqref{eq:stummel-class} in local coordinates defined by a diffeomorphism $\phi$. $S_{n,\loc}(M)$ is invariant under diffeomorphisms in $M$ and under multiplication by a real valued function from $L^{\infty}_{loc}(M)$.

\noindent We have these relationships between Stummel and classes $L^{p}$:

\begin{equation}\label{eq:stummel-incl}
  L^{p}_{\loc}(M) \subset S_{n, \loc}(M)\ \text{if}\ p > n/2\ \text{for}\ n\geq 4\ \text{and if}\ p=2\ \text{for}\ n\leq 3.
\end{equation}

\noindent The same inclusion is valid for the uniform class $S_{n}(\RRn)$ if $L^{p}_{\loc}(\RRn)$, which is replaced
by the uniform $L^{p}_{\text{unif}}(\RRn)$, i.e., functions from $L^{p}_{\loc}(\RRn)$,
whose $L^{p}$ norms over unit balls on $\RRn$ are uniformly bounded.

The following important majorization property is satisfied for the Stummel classes of potentials, i.e.
if $V\in S_{n}(\RRn)$, then for any $\varepsilon > 0$ there exists $C > 0$ such that

$$||Vu||_{2} \leq \varepsilon ||\Delta u||_{2} + C ||u||_{2}$$ for any $u\in C_{0}^{\infty}(\RRn)$.

\subsection{Kato Classes}
The Kato class was introduced in~\cite{Kato1972}, and one could
find a very good reference in~\cite{cycon1987schrodinger}, \S~1.2.

A real-valued measurable function $f\in K_{n}(\RRn)$, the uniform Kato class,
if it satisfies these conditions
\begin{equation}\label{dfn:kato-class}
  \begin{aligned}
    & \lim_{r\downarrow 0}\left[\sup_{x}\int_{|x-y|\leq r}|x-y|^{2-n}|f(y)|dy\right] = 0 && \text{for } n\geq 3\\
    & \lim_{r\downarrow 0}\left[\sup_{x}\int_{|x-y|\leq r}\log(|x-y|^{-1})|f(y)|dy\right] = 0 && \text{for } n = 2\\
    & \sup_{x}\int_{|x-y|\leq r_{0}}|f(y)|dy < \infty && \text{for } n = 1
  \end{aligned}
\end{equation}
where $r_{0} > 0$ is an arbitrary fixed number.

The class $K_{n}(\RRn)$ is invariant
by multiplication of the bounded real-valued measurable function and under
diffeomorphisms of $\RRn$ which are linear at infinity. Thus we can define
$K_{n,\loc}(M)$ for any smooth manifold $M$ with $\dim(M) = n$.

Similar to the inclusions~\eqref{eq:stummel-incl}, these inclusions for the Kato classes
\begin{equation*}
  L^{p}_{\loc}(M) \subset K_{n, \loc}(M)\ \text{if}\ p > n/2\ \text{for}\ n\geq 2\ \text{and if}\ p=2\ \text{for}\ n = 1.
\end{equation*}
We can replace these inclusions for the uniform $K_{n}(\RRn)$
if we use the class $L^{p}_{\text{unif}}(\RRn)$ instead of $L^{p}(\RRn)$.

The following majorization property for the forms makes Kato classes very important:
if $V\in K_{n}(\RRn)$, then for any $\varepsilon > 0$, there exists $C > 0$ such that

$$|(Vu, u)| \leq \varepsilon (-\Delta u, u) + C||u||_{2}^{2}$$

\noindent for any $u \in C^{\infty}_{0}(\RRn)$.

We want to mention that the definition~\eqref{dfn:kato-class}
of the uniform Kato class is not applicable to the general
Riemannian manifolds, and a more general definition of $K_{n}(M)$ is given
in~B.~G{\"u}neysu paper~\cite{guneysu2014katoclass} - see Definition~2.6
and Theorem~2.13 for the relative bound estimate similar to~\eqref{eq:relative-bound};
very briefly, Definition~2.6, similar to~\eqref{dfn:kato-class}, uses a smooth integral kernel
for the operator $\text{e}^{\frac{t}{2}\Delta}$.
For example, Corollary~2.11  gives an analytical definition of potentials in $K_{n}(M)$
if
\begin{enumerate}
\item The manifold $M$ is geodesically complete with Ricci curvature $\text{Ric}(M) > -C$ for some $C > 0$;
\item The volume $\vol(B_g(x, r)) \geq K r^{n}$ for any $x\in M, r < R,$ and for some $K, R > 0$.
\end{enumerate}
For $p \geq 1$ if $n =1$ and $p > n/2$ if $n \geq 2$ we have $L^{p}(M) + L^{\infty}(M) \subset K_{n}(M)$.   

\section*{Acknowledgments}
I would like to thank Professor Victor Bangert for useful discussions about sets of positive reach, and I am personally
indebted to Professor Ognjen Milatovic for support in writing this paper and helpful remarks.
I would like to express my gratitude to the members of the Analysis and Geometry Seminar at Northeastern University
Professors Robert McOwen and Maxim Braverman for their valuable feedback on my presentation.
I thank Professor David Massey for reviewing this manuscript and making helpful suggestions.

\section*{Conflict of Interest}
I declare that I have no conflicts of interest related to this paper.

\section*{Data Availability Statement}
I certify that no new data were created or analyzed in this study. Data sharing is not applicable to this paper.

\bibliographystyle{acm}
\bibliography{on_fin_prop_sp_comappl_math} 

\end{document}